\documentclass[a4paper]{article}
\usepackage{hyperref}
\usepackage{graphicx}
\usepackage{mathrsfs}
\usepackage{amsmath}
\usepackage{enumerate} 
\usepackage{amsxtra,amssymb,latexsym, amscd,amsthm}
\usepackage{indentfirst}
\usepackage{color}
\usepackage[utf8]{inputenc}
\usepackage[mathscr]{eucal}
\usepackage{amsfonts}
\usepackage{graphics}
\usepackage{multirow}
\usepackage{array}
\usepackage{subfigure}

\usepackage{cite}
\usepackage{wrapfig}

\usepackage{algorithm}
\usepackage[noend]{algpseudocode}

\makeatletter
\def\algbackskip{\hskip-\ALG@thistlm}
\makeatother

\usepackage{ushort}
\usepackage{pdflscape}
\usepackage{rotating}
\usepackage{pgfplots}

\usepackage{subfigure}
\usepackage{tikz}
\usepackage{adjustbox}

\newcommand{\stirling}[2]{\genfrac{\{}{\}}{0pt}{}{#1}{#2}}
\newcommand{\stirlingii}[3][1]{\scalebox{#1}{$\stirling{#2}{#3}$}}

\usepackage{array}
\usepackage{booktabs}
\setlength{\heavyrulewidth}{1.5pt}
\setlength{\abovetopsep}{4pt}

\newcommand{\footremember}[2]{%
    \footnote{#2}
    \newcounter{#1}
    \setcounter{#1}{\value{footnote}}%
}
\newcommand{\footrecall}[1]{%
    \footnotemark[\value{#1}]%
} 

\def\x{\mathbf{x}}

\def\a{\mathbf{a}}

\def\R{{\mathbb R}}

\def\N{{\mathbb N}}

\DeclareMathOperator{\conv}{conv}
\DeclareMathOperator{\rank}{rank }

\DeclareMathOperator{\eig}{eig}

\DeclareMathOperator{\diag}{diag}

\DeclareMathOperator{\trace}{trace}

\newtheorem{lemma}{\bf Lemma}[section]

\newtheorem{assumption}{\bf Assumption}[section]
\newtheorem{example}{\bf Example}[section]
\newtheorem{theorem}{\bf Theorem}[section]
\newtheorem{proposition}{\bf Proposition}[section]
\newtheorem{corollary}{\bf Corollary}[section]
\newtheorem{definition}{\bf Definition}[section]
\newtheorem{remark}{\bf Remark}[section]
\providecommand{\keywords}[1]
{
  \small	
  \textbf{\textbf{Keywords:}} #1
}
\pgfplotsset{compat=1.13}
\begin{document}

\small
\definecolor{qqzzff}{rgb}{0,0.6,1}
\definecolor{ududff}{rgb}{0.30196078431372547,0.30196078431372547,1}
\definecolor{xdxdff}{rgb}{0.49019607843137253,0.49019607843137253,1}
\definecolor{ffzzqq}{rgb}{1,0.6,0}
\definecolor{qqzzqq}{rgb}{0,0.6,0}
\definecolor{ffqqqq}{rgb}{1,0,0}
\definecolor{uuuuuu}{rgb}{0.26666666666666666,0.26666666666666666,0.26666666666666666}
\newcommand{\vi}[1]{\textcolor{blue}{#1}}
\newif\ifcomment
\commentfalse
\commenttrue
\newcommand{\comment}[3]{%
\ifcomment%
	{\color{#1}\bfseries\sffamily#3%
	}%
	\marginpar{\textcolor{#1}{\hspace{3em}\bfseries\sffamily #2}}%
	\else%
	\fi%
}
\newcommand{\victor}[1]{
	\comment{blue}{V}{#1}
}
\definecolor{oucrimsonred}{rgb}{0.6, 0.0, 0.0}
\newcommand{\jean}[1]{
	\comment{oucrimsonred}{J}{#1}
}
\definecolor{cadmiumgreen}{rgb}{0.0, 0.42, 0.24}
\newcommand{\hoang}[1]{
	\comment{cadmiumgreen}{H}{#1}
}
\title{A hierarchy of spectral relaxations
for polynomial optimization}
\author{%
Ngoc Hoang Anh Mai\footremember{1}{CNRS; LAAS; 7 avenue du Colonel Roche, F-31400 Toulouse; France.} \and %
   Jean-Bernard Lasserre\footrecall{1} \footremember{2}{Universit\'e de Toulouse; LAAS; F-31400 Toulouse, France.} \and  %
   Victor Magron\footrecall{1} %
  }
\maketitle
\begin{abstract}
We show that (i) any constrained polynomial optimization problem (POP) has an equivalent formulation on a variety contained in an Euclidean sphere and (ii) the resulting semidefinite relaxations in the moment-SOS hierarchy have the constant trace property (CTP) for the involved matrices. 
We then exploit the CTP to avoid solving the semidefinite relaxations via interior-point methods and rather use ad-hoc spectral methods that minimize the largest eigenvalue of a matrix pencil. Convergence to the optimal value of the semidefinite relaxation is guaranteed. 
As a result we obtain a hierarchy of nonsmooth  ``spectral relaxations'' of the initial POP.
Efficiency and robustness of this spectral hierarchy is tested against several equality constrained POPs on a sphere as well as on a sample
of randomly generated quadratically constrained quadratic problems (QCQPs).
\end{abstract}
\keywords{polynomial optimization, moment-SOS hierarchy, maximal eigenvalue minimization, limited-memory bundle method, nonsmooth optimization, semidefinite programming}
\tableofcontents
\section{Introduction}

The moment-sums of squares (moment-SOS) hierarchy for solving polynomial optimization problems (POP) consists of solving a sequence of semidefinite programming (SDP) relaxations of increasing size. 
Thanks to powerful positivity certificates from real algebraic geometry, its associated monotone sequence of optimal values converges to the global optimum \cite{lasserre2001global}.  Even though this procedure is efficient, with generically  finite convergence \cite{nie2014optimality}, it suffers from two main drawbacks: \\

(i) In view of the current status of SDP solvers, it is limited to problems of modest size unless some sparsity and/or symmetry can be exploited.

(ii) When solving the semidefinite (SDP) relaxations of the hierarchy by interior-point methods (as do most current SDP solvers) the computational cost is quite high.\\

Recent efforts have tried to overcome these drawbacks: 

(a) By designing computationally cheaper hierarchies of convex relaxations based on alternative positivity certificates such as the bounded degree SOS hierarchy \cite{lasserre2017bounded}, nonnegative circuits relying on geometric programming \cite{deWolff17} or second-order cone programming  \cite{wang2019second}, and arithmetic-geometric-exponentials \cite{SAGE16}  relying on relative entropy programming.

(b) By  exploiting certain sparsity patterns in the POP formulation, based on correlative sparsity \cite{Waki06SparseSOS,Las06SparseSOS} or term sparsity \cite{wang2019tssos,wang2020chordal,cstssos}, possibly combined with (a). 

(c) By exploiting a \emph{Constant Trace Property} (CTP)  
of semidefinite relaxations associated with POPs coming from combinatorial optimization \cite{helmberg2000spectral,yurtsever2019scalable}. This permits to solve the semidefinite relaxation with ad-hoc method, like, e.g., limited-memory bundle methods, instead of the costly interior-point methods.

The present paper is part of the latter type-(c) efforts.

\subsection{Background on SDP with CTP}
One way to exploit the CTP of matrices in SDPs is
to consider the dual which reduces 
to minimize the maximum eigenvalue of a symmetric matrix pencil  \cite{helmberg2000spectral}.
For problems of moderate size one may solve the latter problem with interior-point methods \cite{ben2001lectures}. 
However for larger-scale instances, 
running a single iteration becomes computationally too demanding and therefore one has to 
use alternative methods, and in particular first-order methods.

To solve large-scale instances of this maximal eigenvalue minimization problem, 
two types of first-order methods can be used:   subgradient descent or variants of the mirror-prox algorithm \cite{nemirovsky1983problem}, and spectral bundle methods \cite{helmberg2000spectral}. 
In other methods of interest based on non-convex formulations  \cite{burer2003nonlinear,journee2008low}, the problem is directly  solved  over the set of low rank matrices. These latter approaches are particularly efficient for problems where the solution is low rank, e.g., for matrix completion or combinatorial relaxations.

Despite their empirical efficiency, the computational complexity of 
spectral bundle and low rank methods is still not completely understood. 
This is in contrast with methods based on stochastic smoothing results for which explicit 
computational complexity estimates are available. For instance in \cite{d2014stochastic} 
smooth stochastic approximations of the maximum eigenvalue function are obtained via
rank-one Gaussian perturbations.
In \cite{overton1995second} Newton's method is used, assuming that the multiplicity of 
the maximal eigenvalue is known in advance.

By combining quasi-Newton methods (e.g.  Broyden-Fletcher-Goldfarb-Shanno (BFGS) method or its so-called ``Limited-memory'' version (L-BFGS)   \cite{nocedal1980updating}) with adaptive gradient sampling \cite{burke2005robust,kiwiel2007convergence}, convergence guarantees are obtained for certain non smooth problems while keeping good empirical performance \cite{lewis2013nonsmooth,curtis2015quasi}.  

Another hybrid method is the Limited-Memory Bundle Method (LMBM) which combines L-BFGS with bundle methods \cite{haarala2007globally,haarala2004new}:
Briefly, L-BFGS is used in the line search procedure to determine the step sizes in the bundle method.  LMBM enjoys global convergence for locally Lipschitz continuous functions which are not necessarily differentiable.

Finally the more recent SketchyCGAL algorithm \cite{yurtsever2019scalable} 
also uses limited memory and arithmetic.
It combines a primal-dual optimization scheme together with a randomized sketch for  low-rank  matrix  approximation.   
Assuming that strong duality holds,
it provides a near-optimal low-rank approximation. 
A variant of SketchyCGAL can handle SDPs with \emph{bounded} (instead of \emph{constant}) trace property.

Concerning SDPs coming from relaxations in polynomial optimization,
Malick and Henrion \cite[Section 3.2.3]{henrion2012projection} have used the CTP
to provide an efficient algorithm for 
 unconstrained polynomial optimization problems. At last but not least,
the CTP trivially holds for Shor's relaxation \cite{shor1987quadratic} of 
combinatorial optimization problems
formulated as linear-quadratic POPs on the discrete hypercube $\{-1,1\}^n$.
This fact 
has been exploited in Helmberg and Rendl  \cite{helmberg2000spectral} to avoid solving the associated SDP via interior-point methods.

\subsection{Contribution}
A novelty with respect to previous (c)-efforts is to show that \emph{every} POP 
on a compact basic semialgebraic set has an equivalent 
equality constrained POP formulation on an Euclidean  sphere (possibly after adding some artificial variables) such that each of its semidefinite relaxations in the moment-SOS hierarchy has the CTP. 
We call CTP-POP such a formulation of POPs.
Therefore to solve each semidefinite relaxation
of a CTP-POP one may avoid the computationally costly interior-point methods in some cases. Indeed 
as the dual reduces to minimize the largest eigenvalue of a matrix pencil, one may rather use efficient ad-hoc non smooth methods as those invoked above. 








\subsection*{Main results}

    (I) In Section \ref{sec:POP.sphere}, we prove that each semidefinite moment relaxation indexed by $k\in\N$:
    \begin{equation}\label{eq:moment.hierarchy.intro}
-\tau_k = \sup _{\mathbf X\in \mathcal{S}_k} \{ \left< \mathbf C_k,\mathbf X\right>\,:\,\mathcal{A}_k \mathbf X=\mathbf b_k\,,\, \mathbf X \succeq 0\}\,,
\end{equation}
    of the moment-SOS hierarchy associated with an equality constrained POP on an Euclidean sphere of $\R^n$ has CTP (see Lemma \ref{lem:trace.constant.property}), i.e.,
    \[\forall \ \mathbf X\in \mathcal{S}_k\,,\,\mathcal{A}_k \mathbf X=\mathbf b_k\Rightarrow \trace(\mathbf X)=a_k\,,\]
    where $\mathcal A_k^T:\R^{m_k}\to \mathcal S_k$ is a linear operator with $\mathcal S_k$ being the set of  real symmetric matrices of size $\binom {n+k}{n}$, $\mathbf C_k\in \mathcal S_k$ and $\mathbf b_k\in\R^{m_k}$ with $m_k=\mathcal O\left(\binom {n+k}{n}^2\right)$. 
  Following the framework by Helmberg and Rendl \cite{helmberg2000spectral}, SDP \eqref{eq:moment.hierarchy.intro}
boils down to minimizing the largest eigenvalue of a matrix pencil:
\begin{equation}\label{eq:nonsmooth.hierarchy.intro}
\begin{array}{rl}
-\tau_k &= \inf \{ a_k\lambda_1(\mathbf C_k-\mathcal{A}_k^T\mathbf z)+\mathbf b_k^T\mathbf z \,:\,\mathbf z\in \R^{m_k}\}\,,
\end{array}
\end{equation}
where $\lambda_1(\mathbf A)$ stands for the largest eigenvalue of $\mathbf A$. 

Hence 
\eqref{eq:nonsmooth.hierarchy.intro} form what we call a hierarchy of (non smooth, convex) \emph{spectral relaxations} of the equality constrained POP on a sphere. 
Convergence of $(\tau_k)_{k\in\N}$ to the optimal value $f^\star$ of the initial POP 
is guaranteed with rate at least 
$\mathcal{O}(k^{-1/c})$ (see Theorem \ref{theo:conver.semi.hie}). 

In addition, existence of an optimal solution  of 
the spectral relaxation  \eqref{eq:nonsmooth.hierarchy.intro} is guaranteed for sufficiently large $k$ under certain conditions on the POP (see Proposition \ref{prop:strong.duality.attainability.POP.sphere}).
Finally, when the set of global minimizers 
of the equality constrained POP on the sphere
is finite, we also describe how to obtain an optimal solution $\x^\star$ 
via an optimal solution $\bar {\mathbf z}$ of  \eqref{eq:nonsmooth.hierarchy.intro}.

  (II) In Section \ref{sec:POP.on.compact.set} we prove that 
any POP on a compact basic semialgebraic set (including a ball constraint $R-\|\mathbf x\|_2^2\ge 0$) has an equivalent equality constrained POP (called CTP-POP) on a sphere of $\R^{n+l_g+1}$, where $l_g$ is the number of inequality constraints of the initial POP. 
This CTP-POP can be solved by using spectral relaxations \eqref{eq:nonsmooth.hierarchy.intro}.

(III)  We describe Algorithm \ref{alg:sol.nonsmooth.hier} to handle a given equality constrained POP on the sphere.
It consists of
handling each semidefinite relaxation \eqref{eq:moment.hierarchy.intro} by solving the spectral formulation  \eqref{eq:nonsmooth.hierarchy.intro}, 
with a nonsmooth optimization procedure
 chosen in advance by the user in 
our software library, called \href{https://github.com/maihoanganh/SpectralPOP}{SpectralPOP}. 
This library supports the three optimization subroutines 
LMBM  \cite{haarala2007globally,haarala2004new}, proximal bundle (PB) \cite{helmberg2000spectral}, and SketchyCGAL  \cite{yurtsever2019scalable}.
Our default method in  Algorithm \ref{alg:sol.nonsmooth.hier} is LMBM.

(IV)  Finally, efficiency and robustness of SpectralPOP are illustrated in Section \ref{sec:benchmark} on extensive  benchmarks. We solve several (randomly generated) dense equality constrained QCQPs on the unit sphere by running
Algorithm \ref{alg:sol.nonsmooth.hier} and compare results with those obtained with the standard moment-SOS hierarchy.
Suprisingly SpectralPOP can provide the optimal value as well as an optimal solution with high accuracy, and up to twenty five times faster than the semidefinite hierarchy. 
For instance, SpectralPOP can solve the first relaxation of 
minimization problem of dense quadratic polynomials on the unit sphere  with up to $n=500$ variables in about $47$ seconds and up to $1500$ variables in about $3500$ seconds on a standard laptop computer.
Eventually, an extended application of spectral relaxations for squared polynomial systems is presented in this section.
In view of numerical experiments, our strategy is currently well-suited to equality constrained problems  rather than POPs with several inequality constraints.

In \cite{helmberg2000spectral}, Helmberg and Rendl propose a spectral bundle method (based on Kiwiel's proximal bundle method \cite{kiwiel1990proximity}) to solve an SDP relying on the  maximal eigenvalue minimization problem of the form \eqref{eq:nonsmooth.hierarchy.intro}.
This method works better than interior-point algorithms for very large-scale SDPs, 
 when the number of trace equality constraints is  not larger than the size of the positive semidefinite matrix (e.g., Shor's relaxation of MAXCUT problems).
However this method is not always more efficient than interior-point solvers (e.g., SDPT3) 
 for instance when the SDPs involve 
 a number of trace equality constraints which is larger than the size of the positive semidefinite matrix, as reported in \cite[Table 1-6]{helmberg2014spectral}. 
 Unfortunately this latter type of SDP is the generic form of moment-SOS relaxations for POPs and thus is not suitable to be solved by Helmberg-Rendl's spectral bundle method. 
By contrast with previous works, our numerical results show that the combination between Helmberg-Rendl's spectral formulation and LMBM is cheaper and faster than Mosek (the currently fastest SDP solver based on interior-point method) while maintaining the same accuracy when solving moment relaxations of equality constrained POPs on a sphere.
\section{Background and Preliminary Results}
With $\x = (x_1,\dots,x_n)$, let $\R[\x]$ stands for the ring of real polynomials and let  $\Sigma[\x]\subset\R[\x]$ be its subset of SOS polynomials.
Let us note $\R[\x]_t$ and $\Sigma[\x]_t$ their respective restrictions 
to polynomials of degree at most $t$ and $2t$. 
Given $\alpha = (\alpha_1,\dots,\alpha_n) \in \N^n$, we note $| \alpha| := \alpha_1 + \dots + \alpha_n$.
Let $(\x^{ \alpha})_{\alpha\in\N^n}$ 
be the canonical basis of monomials for  $\R[\x]$ (ordered according to the graded lexicographic order) and 
$\mathbf v_t(\x)$ be the vector of monomials up to degree $t$, with length $\stirlingii n{t} := \binom{n+t}{n}$.
A polynomial $p\in\R[\x]_t$ is written as  
$p(\x)\,=\,\sum_{| \alpha | \leq t} p_\alpha\,\x^\alpha\,=\,\mathbf{p}^T\mathbf  v_t(\x)$, 
where $\mathbf{p}=(p_\alpha)\in\R^{\stirlingii n t}$ is its vector of coefficients in the canonical basis. 
The $l_1$-norm of a polynomial $p$ is given by the $l_1$-norm of its vector of coefficients $\mathbf{p}$, that is $\|\mathbf{p}\|_1 := \sum_{\alpha} |p_\alpha|$. Given $\a\in\R^n$, the $l_2$-norm of $\a$ is  $\|\a\|_2:=(a_1^2+\dots+a_n^2)^{1/2}$. For every $l\in\N^{>0}$, note $[l]:=\{1,\dots,l\}$ and $[0]:=\emptyset$.

\paragraph{Riesz linear functional.} Given  a real-valued sequence $\mathbf y=(y_\alpha)_{\alpha\in\N^n}$, define the 
Riesz linear functional $L_{\mathbf y}:\R[ \mathbf x ] \to \R$, $f\mapsto {L_{\mathbf y}}( f ) := \sum_{\alpha} f_\alpha y_\alpha$.  
A real infinite (resp. finite) sequence $( y_\alpha)_{\alpha  \in \N^n}$ (resp. $( y_\alpha)_{\alpha  \in \N^n_t}$) has a \emph{representing measure} if there exists a finite Borel measure $\mu$ such that $y_\alpha  = \int_{\R^n} {x^\alpha d\mu(\mathbf x)}$ is satisfied for every $\alpha  \in {\N^n}$ (resp. $\alpha  \in {\N^n_t}$). In this case, $( y_\alpha)_{\alpha  \in \N^n}$ is called be the moment sequence of $\mu$. 

\paragraph{Moment matrices.} The moment matrix of degree $d$ associated with a real-valued sequence $\mathbf y=(y_\alpha)_{\alpha  \in \N^n}$ and $d\in \N^{>0}$,
is the real symmetric matrix $\mathbf M_d(\mathbf y)$ of size $\stirlingii n d$,  with entries
$( y_{\alpha  + \beta })_{\alpha,\beta\in \N^n_d} $. 

\paragraph{Localizing matrices.} The localizing matrix of degree $d$ associated with $\mathbf y=(y_\alpha)_{\alpha  \in \N^n}$ and $p = \sum_{\gamma} p_\gamma x^\gamma  \in \R[\mathbf x]$, is the real symmetric matrix $\mathbf M_d(p\,\mathbf y)$ of size $\stirlingii n d$ 
with entries $(\sum_\gamma  {{p_\gamma }{y_{\gamma  + \alpha  + \beta }}})_{\alpha, \beta\in \N^n_d}$.

\subsection{General POPs on basic compact semialgebraic sets}
\label{sec:pop.on.sphere.ball}
A polynomial optimization problem is of the form
\begin{equation}\label{eq:POP.on.variety.ball}
f^\star:=\inf \{f(\mathbf x)\ :\ \mathbf x\in S(g,h)\}\,,
\end{equation}
where $S(g,h)$ is a basic semialgebraic set defined as follows:
\begin{equation}
\label{eq:variety-V.ball}
    S(g,h) \,:=\,\{\,\mathbf x\in\R^n:\: g_i(\mathbf x)\ge  0\,,\,i\in [l_g]\,;\,h_j(\mathbf x)\,=\,0\,,\:j\in [l_h]\,\}
\end{equation}
for some polynomials $f,g_i,h_j\in\R[\mathbf x]$. We note $g:=\{g_i\}_{i\in[l_g]}$ and $h:=\{h_j\}_{j\in[l_h]}$. 
For $p\in\R[\x]$, let $\lceil p\rceil:=\lceil{\rm deg}(p)/2\rceil$.

If $S(g,h)\ne \emptyset$ then $f^\star <\infty$ and POP \eqref{eq:POP.on.variety.ball} has at least one global minimizer.  Next, as we are concerned with POPs on compact feasible sets, we assume that $S(g,h)\subset B_R^n$, where $B_R^n:=\{\mathbf x\in\R^n: R-\Vert \mathbf x\Vert_2^2\geq 0\}$.
In addition, if $l_g\neq0$ then we may and will assume that
$g_1:=R-\|\mathbf x\|_2^2$.

\paragraph{Second-order sufficient condition.}
Given $(\lambda_i)_{i\in[l_g]}$ and
$(\gamma_j)_{i\in[l_h]}$, let:
\[\x\mapsto \mathcal{L}(\x,\lambda,\gamma)\,:=\,
f(\x)-\sum_{i\in[l_g]}\lambda_i\,g_i(\x)
-\sum_{j\in[l_h]}\gamma_j\,h_j(\x),\quad \x\in\R^n.\]
Given $\x\in S(g,h)$, let $J(\x):=\{\,i\in[l_g]: g_i(\x)=0\,\}$.
\begin{definition}
\label{def-S2}
The second-order sufficient condition
(S2) holds at $\x^\star\in S(g,h)$ under the three following conditions.
\begin{itemize}
\item {\bf Constraint qualification:} The family $\{\nabla g_i(\x^\star),\nabla h_j(\x^\star)\}_{i\in J(\x^\star), j\in [l_h]}$ is linearly independent. This implies the existence of 
KKT-Lagrange multipliers $\lambda^\star_i\ge0$, 
$i\in [l_g]$, and $\gamma_j\in\R$, $j\in [l_h]$, such that
$\nabla\mathcal{L}(\x^\star,\lambda^\star,\gamma^\star)=0$ and $\lambda^\star_i\,g_i(\x^\star)=0$ for all
$i\in[l_g]$.
\item {\bf Strict complementarity:} $\lambda_i^\star +g_i(\x^\star)>0$, for all $i\in [l_g]$.
\item $\mathbf{u}^T\nabla^2\mathcal{L}(\x^\star,\lambda^\star,\gamma^\star)\,\mathbf{u}>0$ for all $\mathbf{u}\neq0$ such that 
$\mathbf{u}^T\nabla \mathcal{L}(\x^\star,\lambda^\star,\gamma^\star)=0$.
\end{itemize}
\end{definition}

\paragraph{The Moment-SOS hierarchy.}
Given $k\in\N$, the set
\[Q(g,h): = \left\{ \sigma_0+\sum_{i = 1}^{l_g}\sigma_ig_i + \sum_{j = 1}^{l_h} {\psi _jh_j} \ :\ \sigma_0 \in \Sigma[ \mathbf x]\,,\, \sigma_i \in \Sigma[ \mathbf x]\,,\, \psi_j\in \R[\mathbf x]\right\}\,.\]
is the \emph{quadratic module} associated with the  semialgebraic set $S(g,h)$, while the set
\[Q_k(g,h): = \left\{ \sigma_0+\sum_{i = 1}^{l_g}\sigma_ig_i + \sum_{j = 1}^{l_h} {\psi _jh_j} \left|\begin{array}{rl}
     & \sigma_0 \in \Sigma[ \mathbf x]_k\,, \\
     & \sigma_i \in \Sigma[ \mathbf x]_{k-\lceil g_i \rceil}\,,\\
     & \psi_j\in \R[\mathbf x]_{2(k - \lceil h_j \rceil)}
\end{array}\right.\right\}\,,\]
is its truncated version at order $k$. 
 Notice that
$g_1\,(=R-\|\mathbf x\|_2^2)\in Q(g,h)$ and therefore $Q(g,h)$ is \emph{Archimedean} \cite{lasserre2010moments}.

Let $c_\alpha := \frac{|\alpha|!}{\alpha_1!\dots\alpha_n!}$ for each $\alpha \in \N^n$. 
We note $\| p \|: = \max _\alpha   \frac{|p_\alpha|} {c_\alpha}$, for a given $p \in \R[\mathbf x]$.
As a consequence of Nie-Schweighofer's main result in \cite[Theorem 8]{nie2007complexity}, one obtains the following result:
\begin{lemma}
\label{lem:Nie-Schweighofer}
Let $f^\star$ be as in \eqref{eq:POP.on.variety.ball} with $S(g,h)\ne\emptyset$ as in \eqref{eq:variety-V.ball}.
There exists $c>0$ depending on $g$ and $h$ such that for $k\in\N$ with $k\ge c\exp((2d^2n^d)^c)$, one has
\[(f-f^\star) +6d^3n^{2d}\|f\|\log(k/c)^{-1/c}\in Q_k(g,h)\,.\]
\end{lemma}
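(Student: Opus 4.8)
The plan is to deduce this from Nie and Schweighofer's effective form of Putinar's Positivstellensatz, \cite[Theorem 8]{nie2007complexity}, applied to a small positive shift of the objective — essentially reproving their degree/convergence corollary in the presence of equality constraints. First I would \emph{reduce to inequality constraints}: replacing each $h_j=0$ by the pair $h_j\ge 0$, $-h_j\ge 0$, the set $S(g,h)$ is the basic closed semialgebraic set defined by $\tilde g:=g\cup\{\pm h_j:j\in[l_h]\}$, which is Archimedean since $g_1=R-\|\x\|_2^2\in\tilde g$. As every $\psi_j\in\R[\x]_{2(k-\lceil h_j\rceil)}$ is a difference of two SOS polynomials of the same degree, the truncated quadratic module $Q_k(g,h)$ coincides with the order-$k$ Putinar cone of $\tilde g$, so it suffices to produce a Putinar certificate of order $k$ for $\tilde g$ (one may moreover assume, after a linear rescaling that only changes the constant $c$ below, the normalization under which \cite[Theorem 8]{nie2007complexity} is stated).

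Next I would \emph{shift the objective}. Let $c_0$ be the constant that \cite[Theorem 8]{nie2007complexity} attaches to $\tilde g$, and take $c\ge c_0$ (large enough, fixed below). For $k$ with $k\ge c\exp((2d^2n^d)^c)$ we have $\log(k/c)\ge(2d^2n^d)^c\ge 1$, so
\[\delta_k\ :=\ 6d^3n^{2d}\,\|f\|\,\log(k/c)^{-1/c}\ \in\ (0,\,3dn^d\|f\|]\]
is well defined. Put $q:=f-f^\star+\delta_k$. Since $S(g,h)\ne\emptyset$ the value $f^\star$ is attained, hence $q>0$ on $S(g,h)$ and $\min_{S(g,h)}q=\delta_k$. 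The coefficient vector of $q$ agrees with that of $f$ except in the constant term, and using $S(g,h)\subseteq B_R^n$ together with $\sum_{|\alpha|=j}c_\alpha=n^j$ gives $|f^\star|\le\|f\|\sum_{j\le d}n^jR^{j/2}$, so $\|q\|\le\kappa\|f\|$ for a constant $\kappa$ depending only on $n,d,R$ (of size $O(dn^d)$ under the normalization above).

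Then I would \emph{apply the theorem and close the loop}. By \cite[Theorem 8]{nie2007complexity}, $q$ lies in the order-$k$ Putinar cone of $\tilde g$ provided $k\ge c_0\exp\big((d^2n^d\,\|q\|/\delta_k)^{c_0}\big)$. Now
\[d^2n^d\,\frac{\|q\|}{\delta_k}\ \le\ d^2n^d\cdot\frac{\kappa\|f\|}{6d^3n^{2d}\|f\|}\,\log(k/c)^{1/c}\ =\ \frac{\kappa}{6dn^d}\,\log(k/c)^{1/c}\ \le\ \log(k/c)^{1/c}\]
once $c$ is chosen so that $\kappa\le 6dn^d$ (possible after the normalization of Step 1, as $d,n\ge1$); raising to the power $c_0\le c$ and using $\log(k/c)\ge1$ yields $(d^2n^d\|q\|/\delta_k)^{c_0}\le\log(k/c)$, hence $c_0\exp(\cdots)\le c_0(k/c)\le k$. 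The hypothesis of \cite[Theorem 8]{nie2007complexity} therefore holds, and by Step 1 we conclude $(f-f^\star)+\delta_k=q\in Q_k(g,h)$, as claimed.

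The main obstacle is the self-referential degree bound: the order $k$ demanded by \cite[Theorem 8]{nie2007complexity} depends on the shift $\delta_k$, which itself depends on $k$. The exponent $-1/c$ in $\delta_k$ is exactly what makes $\|q\|/\delta_k$ grow like $\log(k/c)^{1/c}$, so that the threshold $\exp((\,\cdot\,)^{c_0})$ becomes of order $k$ as soon as $c_0\le c$; and the prefactor $6d^3n^{2d}$ leaves just enough slack to absorb the factor $d^2n^d$ from their bound together with the constant $\kappa$ controlling $\|q\|$ by $\|f\|$. Checking that all these constants depend only on $g$ and $h$ (through $n$, $R$ and the degrees), and that $k\ge c\exp((2d^2n^d)^c)$ is precisely the threshold making every inequality above valid, is the only delicate bookkeeping.
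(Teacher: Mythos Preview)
The paper does not actually supply a proof of this lemma: it is stated immediately after the sentence ``As a consequence of Nie--Schweighofer's main result in \cite[Theorem 8]{nie2007complexity}, one obtains the following result,'' and no argument is given. So there is no ``paper's own proof'' to compare against; the authors simply import the statement as a known quantitative corollary of the effective Putinar theorem.

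Your derivation is the standard way one extracts such a rate from \cite[Theorem~8]{nie2007complexity}, and the logical skeleton is correct: shift $f-f^\star$ by a $k$-dependent $\delta_k>0$ to make it strictly positive on $S(g,h)$, bound $\|q\|/\delta_k$, feed this into the Nie--Schweighofer degree bound, and check that the required order is at most $k$. The self-referential loop closes precisely because $\delta_k$ scales like $\log(k/c)^{-1/c}$, and your inequality chain $(d^2n^d\|q\|/\delta_k)^{c_0}\le\log(k/c)$ is the crux. The reduction of equality constraints to pairs $\pm h_j\ge0$ and the identification of $Q_k(g,h)$ with the truncated Putinar cone of $\tilde g$ via ``any polynomial of degree $\le 2m$ is a difference of two SOS of degree $\le 2m$'' are both sound.

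The only places where your write-up is soft are exactly where you flag them: the normalization needed to invoke \cite[Theorem~8]{nie2007complexity} (their statement assumes the feasible set sits in $(-1,1)^n$, so a rescaling of the variables and of $R$ is implicit and must be absorbed into $c$), and the claim $\kappa\le 6dn^d$, which is not literally automatic but can be arranged by enlarging $c$ since all suppressed constants depend only on $n,d,R$ and hence on $g,h$. If you want the argument to be airtight, write these two normalizations out explicitly; otherwise the proof is in line with what the paper asserts without argument.
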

Next, consider the hierarchy of semidefinite programs (SDP) indexed by $k\in\N$:
\begin{equation}\label{eq:sos.hierarchy.ball}
\rho_k\,:=\,\sup \{\,\xi\in\R\ :\ f-\xi \in Q_k(g,h)\}\,.
\end{equation}
By invoking Lemma \ref{lem:Nie-Schweighofer}, one obtains the convergence behavior of the sequence $(\rho_k)_{k\in\N}$ in the following result.
\begin{theorem}\label{theo:conver.semi.hie.ball}
Let $f^\star$ be as in \eqref{eq:POP.on.variety.ball} with $S(g,h)\ne\emptyset$ as in \eqref{eq:variety-V.ball}. Then:
\begin{enumerate}
\item For all $k\in\N$,  $\rho_k\le \rho_{k+1}\le f^\star$.
\item The sequence $(\rho_k)_{k\in\N}$ converges to $f^\star$ with rate 
at least 
$\mathcal{O}(\log(k/c)^{-1/c})$.
\end{enumerate}
\end{theorem}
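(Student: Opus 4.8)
The plan is to handle the two items separately; both follow quickly from the structure of the truncated quadratic modules $Q_k(g,h)$ together with Lemma~\ref{lem:Nie-Schweighofer}.

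For item~1, I would first record the nesting $Q_k(g,h)\subseteq Q_{k+1}(g,h)$, which holds because $\Sigma[\x]_k\subseteq\Sigma[\x]_{k+1}$, $\Sigma[\x]_{k-\lceil g_i\rceil}\subseteq\Sigma[\x]_{k+1-\lceil g_i\rceil}$ and $\R[\x]_{2(k-\lceil h_j\rceil)}\subseteq\R[\x]_{2(k+1-\lceil h_j\rceil)}$, with the convention that a component is simply absent when its degree bound would be negative. Consequently the feasible set of \eqref{eq:sos.hierarchy.ball} at order $k+1$ contains that at order $k$, so $\rho_k\le\rho_{k+1}$. For the bound $\rho_{k+1}\le f^\star$, note that every element of $Q(g,h)$, and a fortiori of $Q_k(g,h)$, is nonnegative on $S(g,h)$: indeed $\sigma_0\ge0$ everywhere, $\sigma_i g_i\ge0$ on $S(g,h)$ since $g_i\ge0$ there, and $\psi_j h_j=0$ on $S(g,h)$ since $h_j=0$ there. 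Hence $f-\xi\in Q_k(g,h)$ forces $f(\x)\ge\xi$ for all $\x\in S(g,h)$, and since $S(g,h)\ne\emptyset$ (and is compact, so $f^\star$ is finite and attained) we get $\xi\le f^\star$; taking the supremum over feasible $\xi$ gives $\rho_k\le f^\star$.

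For item~2, I would apply Lemma~\ref{lem:Nie-Schweighofer}: for all $k\ge c\exp((2d^2n^d)^c)$ the polynomial $(f-f^\star)+6d^3n^{2d}\|f\|\log(k/c)^{-1/c}$ belongs to $Q_k(g,h)$, i.e. $\xi_k:=f^\star-6d^3n^{2d}\|f\|\log(k/c)^{-1/c}$ is feasible for \eqref{eq:sos.hierarchy.ball} at order $k$, whence $\rho_k\ge\xi_k$. Combined with item~1 this yields $0\le f^\star-\rho_k\le 6d^3n^{2d}\|f\|\log(k/c)^{-1/c}$ for all such $k$, and since the prefactor $6d^3n^{2d}\|f\|$ depends only on the data of the POP while $\log(k/c)^{-1/c}\to0$, both the convergence $\rho_k\to f^\star$ and the announced rate $\mathcal O(\log(k/c)^{-1/c})$ follow.

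I do not anticipate a genuine obstacle: the argument is bookkeeping on degree shifts plus a direct quotation of Lemma~\ref{lem:Nie-Schweighofer}. The only points requiring a little care are (i) justifying the nesting $Q_k(g,h)\subseteq Q_{k+1}(g,h)$ when some shifted degree bounds are negative, and (ii) checking that the multiplicative constant hidden in $\mathcal O(\cdot)$ is truly independent of $k$, which it is because $d$, $n$ and $\|f\|$ are fixed by $f$, $g$ and $h$.
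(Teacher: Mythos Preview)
Your proposal is correct and follows exactly the approach the paper intends: the paper does not spell out a proof but simply states that the theorem is obtained ``by invoking Lemma~\ref{lem:Nie-Schweighofer}'', and your argument is precisely the standard unpacking of that sentence (nesting of the truncated quadratic modules for item~1, and feasibility of $\xi_k=f^\star-6d^3n^{2d}\|f\|\log(k/c)^{-1/c}$ for item~2).
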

For every $k\geq k_{\min}:=\max_{i,j}\{\lceil g_i\rceil,\,\lceil h_j\rceil\,\}$ the dual of \eqref{eq:sos.hierarchy.ball} reads
\begin{equation}\label{eq:moment.hierarchy.ball}
    \begin{array}{rl}
\tau_k \,:= \,\inf \limits_{\mathbf y \in {\R^{\stirlingii[0.7] n{2k}} }} & L_{\mathbf y}(f):\\
\qquad \text{s.t. }& \mathbf M_k(\mathbf y) \succeq 0\,;\:y_0\,=\,1\\
&\mathbf M_{k - \lceil g_i \rceil }(g_i\;\mathbf y)   \succeq 0\,,\,i\in[l_g]\,,\\
&\mathbf M_{k - \lceil h_j \rceil }(h_j\;\mathbf y)   = 0\,,\,j\in [l_h]\,.
\end{array}
\end{equation}
Strong duality between \eqref{eq:sos.hierarchy.ball} and \eqref{eq:moment.hierarchy.ball} holds
if $\tau_k=\rho_k$. 
Slater's condition on either 
\eqref{eq:sos.hierarchy.ball} or  \eqref{eq:moment.hierarchy.ball} is a well-known sufficient condition to ensure strong duality.
However, in case of equality constraints in the description \eqref{eq:variety-V.ball} of $S(g,h)$, Slater's condition does \emph{not} hold for \eqref{eq:moment.hierarchy.ball}.

 \begin{proposition} \label{prop:strong.duality.ball}
(Josz-Henrion \cite{josz2016strong})
Let $f^\star$ be as in \eqref{eq:POP.on.variety.ball} with $S(g,h)\ne\emptyset$ as in \eqref{eq:variety-V.ball}.
Strong duality of the primal-dual \eqref{eq:sos.hierarchy.ball}-\eqref{eq:moment.hierarchy.ball} holds for sufficiently large $k\in\N$, i.e., $\rho_k=\tau_k$ and $\tau_k\in\R$.
Moreover, SDP \eqref{eq:moment.hierarchy.ball} has an optimal solution.
\end{proposition}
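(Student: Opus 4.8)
\emph{Proof sketch.} The plan is to show that the moment relaxation \eqref{eq:moment.hierarchy.ball} has a nonempty \emph{bounded} feasible set---which immediately forces it to attain its infimum with $\tau_k\in\R$---and then to deduce from that boundedness a strictly feasible point of the dual SOS relaxation \eqref{eq:sos.hierarchy.ball}, after which conic duality yields the no-gap relation $\rho_k=\tau_k$.

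First I would check feasibility and boundedness of \eqref{eq:moment.hierarchy.ball}. Feasibility is easy: $S(g,h)$ is nonempty and compact (it lies in $B_R^n$) and $f$ is continuous, so $f$ attains its minimum at some $\x^\star\in S(g,h)$, and the truncated moment sequence $\mathbf y^\star=(\mathbf x^{\star\,\alpha})_{|\alpha|\le 2k}$ of $\delta_{\x^\star}$ is feasible for \eqref{eq:moment.hierarchy.ball} (its moment matrix is rank-one positive semidefinite, $y^\star_0=1$, and each localizing matrix equals $g_i(\x^\star)$, resp.\ $h_j(\x^\star)$, times a rank-one positive semidefinite matrix, hence is $\succeq0$, resp.\ $=0$), giving $\tau_k\le L_{\mathbf y^\star}(f)=f^\star<\infty$. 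For boundedness I would use that a ball constraint is present, $g_1=R-\|\x\|_2^2\ge0$: for any feasible $\mathbf y$, the conditions $\mathbf M_k(\mathbf y)\succeq0$, $\mathbf M_{k-1}(g_1\,\mathbf y)\succeq0$, $y_0=1$ yield the uniform bound $|y_\alpha|\le\max(1,R^k)$ for all $|\alpha|\le 2k$ (the standard localizing-matrix estimate, cf.\ \cite{lasserre2010moments}). Hence the feasible set of \eqref{eq:moment.hierarchy.ball} is nonempty, closed and bounded, so $L_{\mathbf y}(f)$ attains its minimum on it and $\tau_k\in\R$; this already proves the ``moreover'' assertion and $\tau_k\in\R$, for every $k\ge k_{\min}$.

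To get $\rho_k=\tau_k$ I would then use that a conic program with a nonempty bounded feasible set has trivial recession cone, which---by a theorem of the alternative for a product of positive semidefinite cones---forces its dual to admit a strictly feasible point. Applied to the primal--dual pair \eqref{eq:moment.hierarchy.ball}--\eqref{eq:sos.hierarchy.ball}, this produces a representation $f-\xi_0=\sigma_0+\sum_i\sigma_i g_i+\sum_j\psi_j h_j\in Q_k(g,h)$ in which the Gram matrices of $\sigma_0$ and of every $\sigma_i$ are positive definite, i.e.\ a Slater point of \eqref{eq:sos.hierarchy.ball}. It is crucial that this step runs on the SOS side and not on the moment side: as already noted, Slater's condition fails for \eqref{eq:moment.hierarchy.ball} whenever equality constraints are present. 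With \eqref{eq:sos.hierarchy.ball} strictly feasible and bounded above (by $f^\star$) and \eqref{eq:moment.hierarchy.ball} feasible, the conic duality theorem gives zero duality gap $\rho_k=\tau_k$ and attainment of the optimum of \eqref{eq:moment.hierarchy.ball}; combined with the first step, $\rho_k=\tau_k\in\R$ and \eqref{eq:moment.hierarchy.ball} has an optimal solution, for every $k\ge k_{\min}$ and in particular for $k$ sufficiently large.

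The steps I expect to be most delicate are the uniform moment bound $|y_\alpha|\le\max(1,R^k)$---a short but genuine computation, and the one place where the ball constraint is really used---and the conic-duality bookkeeping, where one must avoid invoking Slater on the moment problem and instead route strict feasibility through the SOS side. An alternative for the latter is to build the Slater point of \eqref{eq:sos.hierarchy.ball} directly for large $k$ by applying Lemma~\ref{lem:Nie-Schweighofer} to $f+M$ (with $M$ large enough that $f+M$ is strictly positive on $S(g,h)$) after subtracting small SOS terms with positive-definite Gram matrices, which then only requires some care with degrees so that Lemma~\ref{lem:Nie-Schweighofer} still applies.
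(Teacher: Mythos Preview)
The paper does not supply its own proof of this proposition; it is quoted from Josz--Henrion \cite{josz2016strong}, and the only commentary the paper adds is that those authors establish compactness of the optimal set of \eqref{eq:moment.hierarchy.ball}, from which attainment follows. Your sketch correctly reconstructs the substance of that argument: feasibility of \eqref{eq:moment.hierarchy.ball} via the moments of a Dirac mass at a minimizer, the uniform moment bound coming from the ball constraint $g_1=R-\|\x\|_2^2$ (giving a compact feasible set, hence $\tau_k\in\R$ and primal attainment), and then closing the duality gap by producing a strictly feasible point on the SOS side rather than the moment side. Your remark that the argument in fact goes through for every $k\ge k_{\min}$, not only for large $k$, is consistent with what Josz--Henrion actually prove.

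The one step that is genuinely thin in your sketch is the implication ``bounded moment feasible set $\Rightarrow$ \eqref{eq:sos.hierarchy.ball} strictly feasible via a theorem of the alternative'': naive separation of the recession subspace from the product PSD cone only yields a nonzero element of the dual cone, not an interior one, so an extra argument is needed to upgrade to a positive-definite Gram block in every SOS multiplier. In \cite{josz2016strong} this is done by an explicit construction exploiting $g_1$; your alternative route via Lemma~\ref{lem:Nie-Schweighofer} (perturbing a Putinar certificate by small positive-definite Gram terms) would also close the gap, though only for $k$ sufficiently large, which is all the stated proposition requires. Since you already flag this as the delicate point, there is no real gap in your plan.
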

In \cite{josz2016strong} the authors prove that the set of optimal solutions of 
\eqref{eq:moment.hierarchy.ball} is compact
and therefore \eqref{eq:moment.hierarchy.ball} has an optimal solution.
But nonexistence of an optimal solution of SDP \eqref{eq:sos.hierarchy.ball} may occur.
However, if $S(g,h)$ has nonempty interior then SDP \eqref{eq:moment.hierarchy.ball} has a strictly feasible solution and therefore Slater's condition holds.

\begin{proposition}\label{prop:slater.cond}
(Lasserre \cite[Theorem 3.4 (a)]{lasserre2001global})
If $S(g,h)$ has nonempty interior, then  Slater's condition for the primal-dual \eqref{eq:sos.hierarchy.ball}-\eqref{eq:moment.hierarchy.ball} holds for $k\geq k_{\min}$. 
In this case, $\rho_k=\tau_k$, $\tau_k\in\R$ and both primal-dual \eqref{eq:sos.hierarchy.ball}-\eqref{eq:moment.hierarchy.ball} have optimal solutions.
\end{proposition}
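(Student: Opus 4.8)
The plan is to exhibit a Slater (strictly feasible) point for the moment relaxation \eqref{eq:moment.hierarchy.ball}, to show that the feasible set of \eqref{eq:moment.hierarchy.ball} is compact, and then to invoke standard conic duality. First I would pass to a convenient description of $S(g,h)$: any $g_i$ or $h_j$ that is the zero polynomial gives a vacuous constraint, both in \eqref{eq:POP.on.variety.ball} and in \eqref{eq:moment.hierarchy.ball}, and may be dropped without changing feasible sets or optimal values; since $\mathrm{int}(S(g,h))\ne\emptyset$ and a nonzero polynomial cannot vanish on a nonempty open set, this removes all the $h_j$, so we may assume $l_h=0$ and each $g_i\not\equiv 0$. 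Then, for each nonconstant $g_i$ the zero set $\{\x:g_i(\x)=0\}$ is closed with empty interior, so by the Baire category theorem applied to the nonempty open set $\mathrm{int}(S(g,h))$ there is $\x_0\in\mathrm{int}(S(g,h))$ with $g_i(\x_0)>0$ for every $i\in[l_g]$; fix $\varepsilon>0$ small enough that the closed ball $\bar B(\x_0,\varepsilon)\subset\mathrm{int}(S(g,h))$ and $g_i>0$ on $\bar B(\x_0,\varepsilon)$ for all $i$.

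Next I would build the Slater point. Let $B:=B(\x_0,\varepsilon)$ be the open ball, let $\mu$ be the uniform probability measure on $B$, and let $\mathbf y=(y_\alpha)_{|\alpha|\le 2k}$ with $y_\alpha:=\int\x^\alpha\,d\mu$, so $y_0=1$. For any nonzero $p\in\R[\x]_k$ one has $\mathbf p^T\mathbf M_k(\mathbf y)\,\mathbf p=\int p^2\,d\mu>0$, since $p^2$ vanishes only on a Lebesgue-null set and $\mu$ has positive density on $B$; hence $\mathbf M_k(\mathbf y)\succ 0$. Likewise, as $k\ge k_{\min}$ ensures $k-\lceil g_i\rceil\ge 0$, for any nonzero $p\in\R[\x]_{k-\lceil g_i\rceil}$ we get $\mathbf p^T\mathbf M_{k-\lceil g_i\rceil}(g_i\,\mathbf y)\,\mathbf p=\int g_i\,p^2\,d\mu>0$ because $g_i>0$ on $B$, so $\mathbf M_{k-\lceil g_i\rceil}(g_i\,\mathbf y)\succ 0$. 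Thus $\mathbf y$ is strictly feasible for \eqref{eq:moment.hierarchy.ball}.

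Then I would establish compactness of the feasible set of \eqref{eq:moment.hierarchy.ball}. Using $g_1=R-\|\x\|_2^2$ and the constraint $\mathbf M_{k-1}(g_1\,\mathbf y)\succeq 0$ (well defined since $k\ge k_{\min}\ge\lceil g_1\rceil=1$), its $\alpha$-th diagonal entry equals $R\,y_{2\alpha}-\sum_{i=1}^n y_{2\alpha+2e_i}$ (here $e_i\in\N^n$ denotes the $i$-th unit vector), so $\sum_i y_{2\alpha+2e_i}\le R\,y_{2\alpha}$ for $|\alpha|\le k-1$; combined with $\mathbf M_k(\mathbf y)\succeq 0$ (which forces $y_{2\alpha}\ge 0$ and $|y_{\alpha+\beta}|\le\sqrt{y_{2\alpha}\,y_{2\beta}}$ for $|\alpha|,|\beta|\le k$), a short induction on $|\alpha|$ yields $y_{2\alpha}\le R^{|\alpha|}$ for $|\alpha|\le k$ and hence $|y_\gamma|\le\max(1,R^{k})$ for all $|\gamma|\le 2k$. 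So the feasible set of \eqref{eq:moment.hierarchy.ball} is bounded; being also closed (it is cut out by finitely many linear equalities and semidefiniteness conditions) and nonempty (it contains $\mathbf y$), it is compact, and the linear functional $L_{\mathbf y}(f)$ attains its infimum on it. Therefore $\tau_k\in\R$ and \eqref{eq:moment.hierarchy.ball} has an optimal solution.

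Finally I would conclude with conic duality. The pair \eqref{eq:sos.hierarchy.ball}--\eqref{eq:moment.hierarchy.ball} is a primal--dual conic (SDP) pair, and we have just shown that \eqref{eq:moment.hierarchy.ball} is strictly feasible with finite value; the standard strong-duality theorem for conic programs then gives $\rho_k=\tau_k$, with the supremum in \eqref{eq:sos.hierarchy.ball} attained, i.e. $f-\rho_k\in Q_k(g,h)$. Together with the previous paragraph this shows $\rho_k=\tau_k\in\R$ and that both \eqref{eq:sos.hierarchy.ball} and \eqref{eq:moment.hierarchy.ball} have optimal solutions. I expect the main obstacles to be the first bookkeeping step---removing the vacuous constraints and using Baire category to pin down an interior point at which \emph{all} the $g_i$ are simultaneously positive---and the moment bound extracted from the Archimedean generator $g_1$; once these are in place, the positive-definiteness of the moment and localizing matrices of the uniform measure on a ball, and the appeal to off-the-shelf conic duality, are routine.
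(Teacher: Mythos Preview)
The paper does not supply its own proof of this proposition; it simply attributes the result to Lasserre \cite[Theorem 3.4 (a)]{lasserre2001global} and moves on. So there is no in-paper argument to compare against, and the question reduces to whether your sketch is correct on its own terms.

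Your argument is essentially the standard one and is sound. The key observations---that $\mathrm{int}(S(g,h))\neq\emptyset$ forces every $h_j$ to be identically zero (hence removable), that a Baire argument furnishes an interior point at which all nonconstant $g_i$ are strictly positive, that the moment sequence of the uniform measure on a small ball around such a point is strictly feasible for \eqref{eq:moment.hierarchy.ball}, and that the Archimedean generator $g_1=R-\|\x\|_2^2$ yields uniform bounds $|y_\gamma|\le \max(1,R^k)$ on the feasible set---are all correct and combine exactly as you indicate. Strict feasibility of \eqref{eq:moment.hierarchy.ball} gives attainment in \eqref{eq:sos.hierarchy.ball} and $\rho_k=\tau_k$ by conic duality, while compactness of the moment feasible set gives attainment in \eqref{eq:moment.hierarchy.ball} and finiteness of $\tau_k$. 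One minor cosmetic point: the Baire step is slightly heavier than needed, since once the $h_j$ are gone you can simply take any $\x_0\in\mathrm{int}(S(g,h))$ and note that the finitely many nonzero polynomials $g_i$ vanish only on a nowhere-dense set, so a small perturbation of $\x_0$ inside the interior already lands in $\{g_i>0\ \forall i\}$; but your formulation is fine.
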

Let $\delta _{\mathbf a}$ stands for the Dirac measure at point $\mathbf a\in \R^n$. The following result is a consequence of of Curto-Fialkow's Flat Extension Theorem 
\cite{curto2005truncated,laurent2005revisiting}.

\begin{proposition}\label{pro:flatness.ball}
Let $\mathbf y^\star$ be an optimal solution of the SDP \eqref{eq:moment.hierarchy.ball} at some order $k\in\N$, and assume that the flat extension condition holds, i.e., $\rank(\mathbf M_{k-w}(\mathbf y^\star))=\rank(\mathbf M_{k}(\mathbf y^\star))=:r$, with $w:=\max_{i,j} \{\lceil g_i \rceil,\lceil h_j \rceil\}$.

Then $\mathbf y^\star$ has a representing $r$-atomic measure $\mu  = \sum_{t = 1}^{r} {{\lambda_j}{\delta _{  \mathbf a^{(t)} }}} $, 
where 
$(\lambda_1,\dots,\lambda_r)$ belong to standard $(r-1)$-simplex and $\{\mathbf a^{(1)},\dots,\mathbf a^{(r)}\}\subset S(g,h)$. Moreover, $\tau_k=f^\star$ and $\mathbf a^{(1)},\dots,\mathbf a^{(r)}$ are all global minimizers of POP \eqref{eq:POP.on.variety.ball}.
\end{proposition}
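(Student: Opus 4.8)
The plan is to use the Curto–Fialkow Flat Extension Theorem essentially as a black box to extract the atomic measure, and then to add two short pieces of bookkeeping: that the atoms fall inside $S(g,h)$, and an elementary optimality argument identifying $\tau_k$ with $f^\star$ and the atoms with global minimizers of \eqref{eq:POP.on.variety.ball}.

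First I would apply the Flat Extension Theorem, in the form recalled in \cite{curto2005truncated,laurent2005revisiting}, to the truncated sequence $\mathbf y^\star=(y^\star_\alpha)_{|\alpha|\le 2k}$. Since ranks of moment matrices are nondecreasing and $\rank\mathbf M_{k-w}(\mathbf y^\star)=\rank\mathbf M_k(\mathbf y^\star)=r$, in particular $\mathbf M_k(\mathbf y^\star)$ is a flat extension of $\mathbf M_{k-1}(\mathbf y^\star)$; hence $\mathbf y^\star$ has an $r$-atomic representing measure $\mu=\sum_{t=1}^r\lambda_t\,\delta_{\mathbf a^{(t)}}$ with pairwise distinct $\mathbf a^{(t)}\in\R^n$ and $\lambda_t>0$. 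Writing $\mathbf M_d(\mathbf y^\star)=V_d^T\,\Diag(\lambda)\,V_d$ with $V_d$ the matrix whose rows are $\mathbf v_d(\mathbf a^{(t)})^T$, the hypothesis $\rank\mathbf M_{k-w}(\mathbf y^\star)=r$ says that $\mathbf v_{k-w}(\mathbf a^{(1)}),\dots,\mathbf v_{k-w}(\mathbf a^{(r)})$ are linearly independent, so there exist interpolation polynomials $q_s\in\R[\x]_{k-w}$ with $q_s(\mathbf a^{(t)})=\delta_{st}$. Evaluating the moment identity at $\alpha=0$ and using $y^\star_0=1$ gives $\sum_t\lambda_t=1$, i.e. $(\lambda_1,\dots,\lambda_r)$ lies in the standard $(r-1)$-simplex. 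Because $\mu$ represents $\mathbf y^\star$ up to degree $2k$ and $k-\lceil g_i\rceil\ge k-w$, the constraint $\mathbf M_{k-\lceil g_i\rceil}(g_i\,\mathbf y^\star)\succeq 0$ yields $0\le\int q_s^2\,g_i\,d\mu=\sum_t\lambda_t\,q_s(\mathbf a^{(t)})^2\,g_i(\mathbf a^{(t)})=\lambda_s\,g_i(\mathbf a^{(s)})$, whence $g_i(\mathbf a^{(s)})\ge 0$; likewise $\mathbf M_{k-\lceil h_j\rceil}(h_j\,\mathbf y^\star)=0$ forces $\int q_s^2\,h_j\,d\mu=\lambda_s\,h_j(\mathbf a^{(s)})=0$, so $h_j(\mathbf a^{(s)})=0$. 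Hence $\{\mathbf a^{(1)},\dots,\mathbf a^{(r)}\}\subset S(g,h)$, and $\mu$ is a representing measure of $\mathbf y^\star$ supported on $S(g,h)$.

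It then remains to prove the optimality claim. On one hand, for any global minimizer $\x^\star$ of \eqref{eq:POP.on.variety.ball} the truncated Dirac moment sequence $\mathbf v_{2k}(\x^\star)$ is feasible for \eqref{eq:moment.hierarchy.ball} — its localizing matrices equal $g_i(\x^\star)\,\mathbf v\,\mathbf v^T\succeq0$, resp. $h_j(\x^\star)\,\mathbf v\,\mathbf v^T=0$ — with objective value $f(\x^\star)=f^\star$, so $\tau_k\le f^\star$. On the other hand, since $\mathbf y^\star$ is optimal and $\mu$ is supported on $S(g,h)$,
\[\tau_k=L_{\mathbf y^\star}(f)=\int f\,d\mu=\sum_{t=1}^r\lambda_t\,f(\mathbf a^{(t)})\ \ge\ \sum_{t=1}^r\lambda_t\,f^\star=f^\star,\]
using $f(\mathbf a^{(t)})\ge f^\star$. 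Therefore $\tau_k=f^\star$, and equality throughout the display together with $\lambda_t>0$ and $f(\mathbf a^{(t)})\ge f^\star$ forces $f(\mathbf a^{(t)})=f^\star$ for every $t$, i.e. each atom is a global minimizer of \eqref{eq:POP.on.variety.ball}.

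I expect the only genuinely delicate point to be the degree bookkeeping in the step "atoms lie in $S(g,h)$": one must ensure that the localizing inequalities, truncated at the reduced orders $k-\lceil g_i\rceil$ and $k-\lceil h_j\rceil$, still "see" all $r$ atoms, which is precisely why the hypothesis is stated with $w=\max_{i,j}\{\lceil g_i\rceil,\lceil h_j\rceil\}$ and is what makes the interpolation polynomials $q_s$ of degree $\le k-w$ available. Everything else is routine, and much of the content is already packaged into the Curto–Fialkow/Laurent statements we are allowed to cite.
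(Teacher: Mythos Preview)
The paper does not actually prove this proposition; it merely states it as ``a consequence of Curto--Fialkow's Flat Extension Theorem \cite{curto2005truncated,laurent2005revisiting}'' and moves on. Your proposal supplies precisely the standard details one would fill in: invoke flat extension to get the $r$-atomic measure, use interpolation polynomials of degree at most $k-w$ (available because $\rank\mathbf M_{k-w}(\mathbf y^\star)=r$) to pin each atom inside $S(g,h)$ via the localizing constraints, and finish with the convex-combination sandwich $\tau_k\le f^\star$ and $\tau_k=\sum_t\lambda_t f(\mathbf a^{(t)})\ge f^\star$. The degree bookkeeping you flag is exactly the right thing to be careful about, and your check $\deg(q_s^2 g_i)\le 2(k-w)+2\lceil g_i\rceil\le 2k$ goes through. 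So your argument is correct and is essentially the intended unpacking of the citation; there is nothing to compare against in the paper itself.
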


Henrion and Lasserre  \cite{henrion2005detecting} provide a numerical algorithm to extract the $r$ minimizer $\mathbf a^{(1)},\dots,\mathbf a^{(r)}$ from  $\mathbf M_k(\mathbf y^\star)$ when the assumptions of Proposition \ref{pro:flatness.ball} hold.

The following proposition provides a sufficient condition to ensure finite convergence of the sequence $(\tau_k)_{k\in\N}$.
\begin{proposition}\label{prop:finite.conver.ball}
The following statements are true:
\begin{enumerate}
\item (Nie \cite{nie2014optimality}) The equality $\tau_k=f^\star$ occurs generically for some $k\in\N$.
\item (
Lasserre \cite[Theorem 7.5]{lasserre2015introduction}) If (i) $Q(g,h)$ is Archimedean, (ii) the ideal $\left<h\right>$
is real radical, and (iii) the second-order sufficient condition S2 (see Definition \ref{def-S2}) holds at every global minimizer of POP  \eqref{eq:POP.on.variety.ball}, then $\tau_k=\rho_k=f^\star$ for some $k\in\N$ and both  primal-dual \eqref{eq:sos.hierarchy.ball}-\eqref{eq:moment.hierarchy.ball} have optimal solutions.
\item (Lasserre et al. \cite[Proposition 1.1]{lasserre2008semidefinite} and 
\cite[Theorem 6.13]{lasserre2015introduction}) If $V(h)$ defined as in \eqref{eq:variety-V} is finite, $\tau_k=\rho_k=f^\star$ for some $k\in\N$ and both primal-dual \eqref{eq:sos.hierarchy.ball}-\eqref{eq:moment.hierarchy.ball} have optimal solutions. In this case, the flatness condition holds at order $k$.
\end{enumerate}
\end{proposition}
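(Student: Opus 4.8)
The three parts are each a specialization to the present setting of a result already available in the literature, so for each item the plan is to verify that the hypotheses of the quoted theorem are met and then to quote it.

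For part 1, the plan is to invoke Nie's genericity theorem \cite{nie2014optimality}: when $Q(g,h)$ is Archimedean, the equality $\tau_k=\rho_k=f^\star$ holds with attainment in \eqref{eq:sos.hierarchy.ball}--\eqref{eq:moment.hierarchy.ball} for all data lying outside a proper algebraic (``non-generic'') subset of the coefficient space, because outside that subset the constraint qualification, strict complementarity and second-order sufficiency hold at every KKT point. Archimedeanity is automatic here since $g_1=R-\|\x\|_2^2\in Q(g,h)$. The only subtlety to address is that in our formulation $g_1$ is prescribed, so genericity must be read on the remaining coefficients $(f,g_2,\dots,g_{l_g},h_1,\dots,h_{l_h})$; one has to check that the non-generic locus, being cut out by finitely many polynomial inequations in all the coefficients, does not reduce to an identity on the affine slice on which $g_1$ is fixed, so that its complement is still dense there.

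For part 2, one applies \cite[Theorem 7.5]{lasserre2015introduction} directly: its hypotheses are exactly (i) $Q(g,h)$ Archimedean, (ii) the ideal $\langle h\rangle$ real radical, and (iii) the second-order sufficient condition S2 of Definition \ref{def-S2} at every global minimizer of \eqref{eq:POP.on.variety.ball}, and its conclusion is precisely $\tau_k=\rho_k=f^\star$ for some finite $k$ together with attainment in both \eqref{eq:sos.hierarchy.ball} and \eqref{eq:moment.hierarchy.ball}. Nothing beyond matching the statements is required.

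For part 3, I would first observe that finiteness of $V(h)=\{\x\in\R^n:h_j(\x)=0,\ j\in[l_h]\}$ forces the quotient ring $\R[\x]/\langle h\rangle$ to be finite-dimensional and $S(g,h)\subseteq V(h)$ to be finite; then \cite[Proposition 1.1]{lasserre2008semidefinite} (see also \cite[Theorem 6.13]{lasserre2015introduction}) gives finite convergence, attainment of the infimum in \eqref{eq:moment.hierarchy.ball} at some $\mathbf y^\star$, and stabilization of $\rank\mathbf M_k(\mathbf y^\star)$, i.e.\ the flat extension hypothesis of Proposition \ref{pro:flatness.ball} at that order $k$. Combining with strong duality (Proposition \ref{prop:strong.duality.ball}), which holds for all large $k$, yields $\tau_k=\rho_k=f^\star$ and attainment in \eqref{eq:sos.hierarchy.ball} as well. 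The only point requiring care --- and the closest thing to an obstacle in the whole proposition --- is the bookkeeping on the order: one must take $k$ at least $k_{\min}$ and at least $w=\max_{i,j}\{\lceil g_i\rceil,\lceil h_j\rceil\}$ so that Propositions \ref{pro:flatness.ball} and \ref{prop:strong.duality.ball} apply verbatim, which is harmless since every finiteness conclusion quoted here is of the form ``for $k$ sufficiently large''.
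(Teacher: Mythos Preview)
Your proposal is correct and matches the paper's treatment: the paper gives no proof of this proposition at all, simply stating each item with an embedded citation to the relevant literature result and moving on. Your plan of checking that the standing hypotheses (Archimedeanity from $g_1=R-\|\x\|_2^2$, etc.) place us within the scope of each cited theorem, and then invoking it, is exactly what is implicitly expected---and in fact more detailed than what the paper provides.
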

 Note that the real radical property is not generic and so the condition ``$\langle h\rangle$ is real radical" must be checked case by case.
On the other hand, if $V(h)$ is the real zero set 
of a squared system of polynomial equations, i.e., $l_h=n$, then generically $V(h)$ is finite. 
\subsection{POPs on a variety contained in a sphere}
\label{sec:pop.on.sphere}
We consider a special form of POP \eqref{eq:POP.on.variety.ball} which is of the form
\begin{equation}\label{eq:POP.on.variety}
f^\star:=\inf \,\{\,f(\mathbf x)\ :\ \mathbf x\in V(h)\}\,,
\end{equation}
where $V(h)$ is 
the real variety defined by:
\begin{equation}
\label{eq:variety-V}
    V(h) \,:=\,\{\,\mathbf x\in\R^n:\: h_j(\mathbf x)\,=\,0\,;\:j=1,\ldots,l_h\,\} \,,
\end{equation}
for some set of polynomials $h:=\{h_j\}\subset\R[\mathbf x]$.
We assume that $h_1:=\bar R-\|x\|_2^2$ for some $\bar R>0$, so that
$V(h)\subset \partial B_{\bar R}^n$, where $\partial B_{\bar R}^n:=\{\mathbf x\in\R^n: {\bar R}-\Vert \mathbf x\Vert_2^2= 0\}$.
By assuming that $V(h)\ne \emptyset$, $f^\star <\infty$ and POP \eqref{eq:POP.on.variety} has at least one global minimizer.

Given $k\in\N$, define the \emph{truncated preordering} of order $k$ associated with the variety $V(h)$ in \eqref{eq:variety-V} as follows:
\[P_k(h): = \left\{ \sigma_0+ \sum_{j = 1}^{l_h} {\psi _jh_j} \ :\ \sigma_0 \in \Sigma[ \mathbf x]_k\,,\, \psi_j\in \R[\mathbf x]_{2(k - \lceil h_j \rceil)}\,,\,j\in[l_h]\right\}\,.\]
\begin{remark}
For every $k\in\N$, $P_k(h)$ is also the truncated quadratic module $Q_k(h)$ associated with the semialgebraic set $V(h)=S(\emptyset,h)$.
\end{remark}

As a consequence of Schweighofer's main result in \cite[Theorem 4]{schweighofer2004complexity}, one obtains the following result:
\begin{lemma}
\label{lem:Schweighofer}
Let $f^\star$ be as in \eqref{eq:POP.on.variety} with $V(h)$ as in \eqref{eq:variety-V}.
There exists $c>0$ depending on $V$ such that for $k\in\N$ with $k\ge cd^cn^{cd}$, one has
\[(f-f^\star) +cd^4n^{2d}\|f\|k^{-1/c}\in  P_k(h)\,.\]
\end{lemma}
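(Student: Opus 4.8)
The plan is to derive the statement by applying Schweighofer's explicit form of Schmüdgen's Positivstellensatz \cite[Theorem 4]{schweighofer2004complexity} to a perturbation of $f-f^\star$. That result asserts that if $S\subseteq[-1,1]^n$ is a compact basic semialgebraic set defined by polynomials $g_1,\dots,g_m$ and $p\in\R[\x]$ has degree $d$ and is positive on $S$, then $p$ lies in the preordering generated by $g_1,\dots,g_m$ truncated at a degree that is explicit in $d$, $n$, $\|p\|$ and $\min_S p$, with the dependence on $1/\min_S p$ being polynomial up to a fixed power (this is precisely what yields a polynomial, rather than logarithmic, rate). Since $f-f^\star$ merely vanishes on $V(h)$, I would introduce $\varepsilon>0$ and work with $p_\varepsilon:=f-f^\star+\varepsilon$, which satisfies $\min_{V(h)}p_\varepsilon=\varepsilon>0$.

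First I would normalize the geometry. As $h_1=\bar R-\|\x\|_2^2$ we have $V(h)\subseteq\partial B_{\bar R}^n\subseteq[-\sqrt{\bar R},\sqrt{\bar R}]^n$, and the substitution $\x\mapsto\sqrt{\bar R}\,\x$ carries $V(h)$ into the unit sphere (hence into $[-1,1]^n$), leaves all degrees unchanged, and rescales $\|f\|$, $|f^\star|$ and the coefficient vectors of the $h_j$ by factors depending only on $\bar R$, $d$ and $n$, all of which get absorbed into the final constant. I would then view $V(h)=S(\emptyset,h)$ as being defined by the inequalities $\{h_j\ge0,\ -h_j\ge0\}_{j\in[l_h]}$, apply \cite[Theorem 4]{schweighofer2004complexity} to $p_\varepsilon$, and bound $\|p_\varepsilon\|$ and $|f^\star|=|\min_{V(h)}f|$ by $C(d,n)\|f\|$ on the unit sphere. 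This produces, for every $\varepsilon>0$, membership of $p_\varepsilon$ in that preordering truncated at degree $N(\varepsilon)\le c_0\,d^{c_0}n^{c_0 d}\,(\|f\|/\varepsilon)^{c_0}$ for some $c_0>0$ depending only on $V(h)$.

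Next I would pass from the preordering to $P_k(h)$: any summand $\sigma_T\prod_{j\in T}(\pm h_j)$ with $T\ne\emptyset$ equals $\psi_{j_0}h_{j_0}$ where $j_0\in T$ and $\psi_{j_0}:=(\pm1)\,\sigma_T\prod_{j\in T\setminus\{j_0\}}(\pm h_j)\in\R[\x]$, while the $T=\emptyset$ summand is a single SOS; grouping by index and tracking degrees shows that any degree-$N$ element of the preordering already lies in $P_k(h)$ once $2\bigl(k-\max_j\lceil h_j\rceil\bigr)\ge N$. Finally I would calibrate the perturbation. Inverting the bound, for $k\ge k_{\min}$ one can take $\varepsilon_k$ of the form $c_1\,d^{c_1}n^{c_1 d}\|f\|\,k^{-1/c_0}$ with $N(\varepsilon_k)\le 2\bigl(k-\max_j\lceil h_j\rceil\bigr)$, whence $p_{\varepsilon_k}\in P_k(h)$. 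Choosing the constant $c$ large enough that $1/c\le 1/c_0$ and $c\,d^4n^{2d}\|f\|\,k^{-1/c}\ge\varepsilon_k$ for all $k$ above a threshold of the form $c\,d^cn^{cd}$ (which also exceeds $k_{\min}$), and using that $P_k(h)$ is stable under adding the nonnegative constant $c\,d^4n^{2d}\|f\|\,k^{-1/c}-\varepsilon_k$ (constants are SOS), we obtain $(f-f^\star)+c\,d^4n^{2d}\|f\|\,k^{-1/c}=p_{\varepsilon_k}+\bigl(c\,d^4n^{2d}\|f\|\,k^{-1/c}-\varepsilon_k\bigr)\in P_k(h)$, which is the claim.

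The perturbation trick, the collapse of the truncated preordering onto the quadratic module $P_k(h)=Q_k(h)$, and the stability of $P_k(h)$ under adding nonnegative constants are routine; the real work is reading off the explicit degree bound of \cite[Theorem 4]{schweighofer2004complexity}, consolidating the rescaling and norm-comparison factors into a single constant $c=c(V(h))$, and inverting the bound so as to exhibit the stated closed form with the polynomial rate $k^{-1/c}$ — in contrast with the logarithmic rate produced by the Putinar-type estimate of Lemma \ref{lem:Nie-Schweighofer}.
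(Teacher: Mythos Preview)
Your proposal is correct and follows exactly the route the paper has in mind: the paper does not give a proof but simply states the lemma ``as a consequence of Schweighofer's main result in \cite[Theorem 4]{schweighofer2004complexity}'', and your derivation (perturb $f-f^\star$ by $\varepsilon$, rescale into the unit cube, apply Schweighofer's degree bound, collapse the truncated preordering over $\{\pm h_j\}$ onto $P_k(h)=Q_k(h)$, invert the bound and absorb constants) is precisely how one reads off the statement from that theorem. The only point worth tightening is the explicit appearance of $d^4n^{2d}$: it comes from combining Schweighofer's bound $N\le c\,d^2\bigl(d^2n^d\|p_\varepsilon\|/\varepsilon\bigr)^c$ with the estimate $\|p_\varepsilon\|\le C\,d\,n^d\|f\|$ (via $|f^\star|\le\sum_\alpha|f_\alpha|\le\|f\|\sum_{|\alpha|\le d}c_\alpha\lesssim d\,n^d\|f\|$ on the normalized cube), which after inversion gives $\varepsilon_k\lesssim d^{3+2/c}n^{2d}\|f\|\,k^{-1/c}\le d^4n^{2d}\|f\|\,k^{-1/c}$ once $c\ge 2$.
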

Note that in the case of polynomial optimization on the sphere, one can take $c = 1$ in Lemma \ref{lem:Schweighofer}, as a consequence of the convergence result from  \cite{doherty2012convergence}.

Next, consider the hierarchy of semidefinite programs (SDP) indexed by $k\in\N$:
\begin{equation}\label{eq:sos.hierarchy}
\rho_k\,:=\,\sup \,\{\,\xi\in\R\ :\ f-\xi \in P_k(h)\}\,.
\end{equation}
For every $k\in\N$, the dual of \eqref{eq:sos.hierarchy} reads
\begin{equation}\label{eq:moment.hierarchy}
    \begin{array}{rl}
\tau_k := \inf \limits_{\mathbf y \in {\R^{\stirlingii [0.7]n {2k}} }} & L_{\mathbf y}(f)\\
\qquad \text{s.t. }& \mathbf M_k(\mathbf y) \succeq 0\,;\: y_0\,=\,1\\
&\mathbf M_{k - \lceil h_j \rceil }(h_j\;\mathbf y)   = 0\,,\,j\in [l_h]\,.
\end{array}
\end{equation}
By invoking Lemma \ref{lem:Schweighofer}, one obtains the convergence behavior of the sequence $(\rho_k)_{k\in\N}$ in the following result.
\begin{theorem}\label{theo:conver.semi.hie}
Let $f^\star$ be as in \eqref{eq:POP.on.variety} with $V(h)\neq\emptyset$ as in \eqref{eq:variety-V}. Then:
\begin{enumerate}
\item For all $k\in\N$, $\rho_k\le \rho_{k+1}\le f^\star$.
\item The sequence $(\rho_k)_{k\in\N}$ converges to $f^\star$ with rate at least 
$\mathcal{O}(k^{-1/c})$.
\item If the ideal $\langle h\rangle$ is real radical and the second-order sufficiency condition S2 (Definition \ref{def-S2}) holds at every global minimizer of POP \eqref{eq:POP.on.variety} then $\tau_k=\rho_k=f^\star$ for some $k$ and \eqref{eq:sos.hierarchy} has an optimal solution, i.e., $f-f^\star\in P_k(h)$.
\item If $V(h)$ defined as in \eqref{eq:variety-V} is finite, $\tau_k=\rho_k=f^\star$ for some $k\in\N$ and both primal-dual \eqref{eq:sos.hierarchy.ball}-\eqref{eq:moment.hierarchy.ball} have optimal solutions. In this case, the flatness condition holds at order $k$.
\end{enumerate}
\end{theorem}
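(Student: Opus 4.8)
The plan is to establish the four items mostly independently, leaning on the preliminary results already collected and on the identity $P_k(h)=Q_k(\emptyset,h)$ recorded in the remark following the definition of $P_k(h)$. For item~1, the inclusion $P_k(h)\subseteq P_{k+1}(h)$ is immediate from the definition, since enlarging the degree bounds on $\sigma_0$ and on the multipliers $\psi_j$ can only add elements; hence any $\xi$ feasible for \eqref{eq:sos.hierarchy} at order $k$ is still feasible at order $k+1$, giving $\rho_k\le\rho_{k+1}$. For the bound $\rho_k\le f^\star$, if $f-\xi=\sigma_0+\sum_j\psi_j h_j\in P_k(h)$ then evaluating at any $\mathbf x\in V(h)$ annihilates the terms $\psi_j h_j$ and leaves $f(\mathbf x)-\xi=\sigma_0(\mathbf x)\ge 0$; thus $\xi\le f(\mathbf x)$ on $V(h)$, i.e. $\xi\le f^\star$, and taking the supremum over feasible $\xi$ yields $\rho_k\le f^\star$.

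For item~2, I would invoke Lemma~\ref{lem:Schweighofer}: for $k$ above its threshold it furnishes the membership $(f-f^\star)+cd^4n^{2d}\|f\|\,k^{-1/c}\in P_k(h)$, which says exactly that $\xi:=f^\star-cd^4n^{2d}\|f\|\,k^{-1/c}$ is feasible for \eqref{eq:sos.hierarchy} at order $k$. Combining $\rho_k\ge\xi$ with the upper bound $\rho_k\le f^\star$ from item~1 gives $0\le f^\star-\rho_k\le cd^4n^{2d}\|f\|\,k^{-1/c}$, which is the announced $\mathcal{O}(k^{-1/c})$ rate (and $\mathcal{O}(k^{-1})$ on the sphere, using the bound $c=1$ from \cite{doherty2012convergence}); together with the monotonicity of item~1 this forces $\rho_k\to f^\star$.

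For items~3 and~4, the strategy is to recognize \eqref{eq:sos.hierarchy}-\eqref{eq:moment.hierarchy} as the $g=\emptyset$ instance of \eqref{eq:sos.hierarchy.ball}-\eqref{eq:moment.hierarchy.ball} and apply Proposition~\ref{prop:finite.conver.ball}. Indeed $P_k(h)=Q_k(\emptyset,h)$, and \eqref{eq:moment.hierarchy} is \eqref{eq:moment.hierarchy.ball} with the $g_i$-localizing constraints dropped; moreover, since $h_1=\bar R-\|\mathbf x\|_2^2$ and $\psi_1$ ranges over all polynomials, both $\pm(\bar R-\|\mathbf x\|_2^2)$ lie in $Q(h)$, so $Q(h)$ is Archimedean. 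Then for item~3 the three hypotheses of Proposition~\ref{prop:finite.conver.ball}(2) hold ($Q(h)$ Archimedean, $\langle h\rangle$ real radical, S2 at every global minimizer), yielding $\tau_k=\rho_k=f^\star$ at some finite $k$ and attainment of the optimal value of \eqref{eq:sos.hierarchy}, i.e. $f-f^\star\in P_k(h)$; and for item~4, finiteness of $V(h)$ is precisely the hypothesis of Proposition~\ref{prop:finite.conver.ball}(3), which gives finite convergence $\tau_k=\rho_k=f^\star$, attainment on both the moment and SOS sides, and the flatness condition at that order~$k$.

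Since most of the content is imported rather than reconstructed, I do not expect a genuine obstacle. The one point needing care is the bookkeeping that identifies \eqref{eq:sos.hierarchy}-\eqref{eq:moment.hierarchy} with the inequality-free case of the earlier hierarchy, so that Proposition~\ref{prop:finite.conver.ball} applies verbatim, together with the observation that the sphere equation $h_1$ renders $Q(h)$ Archimedean even though it enters as an equality rather than an inequality. A minor secondary point, if one wants item~2 phrased uniformly in $k$ rather than asymptotically, is to absorb the threshold $k\ge cd^cn^{cd}$ of Lemma~\ref{lem:Schweighofer} into the $\mathcal{O}(\cdot)$ constant.
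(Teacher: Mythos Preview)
Your proposal is correct and follows essentially the same approach as the paper. The paper does not give an explicit proof of this theorem: it prefaces the statement with ``By invoking Lemma~\ref{lem:Schweighofer}\ldots'' and afterwards remarks that the analogues of Propositions~\ref{prop:strong.duality.ball} and~\ref{pro:flatness.ball} hold with $V(h)$ in lieu of $S(g,h)$; your write-up simply makes these deferrals explicit, including the identification $P_k(h)=Q_k(\emptyset,h)$ and the verification that $Q(h)$ is Archimedean via $h_1=\bar R-\|\mathbf x\|_2^2$, so that Proposition~\ref{prop:finite.conver.ball} applies directly for items~3 and~4.
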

With $V(h)$ in lieu of $S(g,h)$,
strong duality and analogues of
Proposition \ref{prop:strong.duality.ball} and
\ref{pro:flatness.ball}, 
also hold.

\subsection{Spectral minimizations of SDP}
\label{sec:sdp.ctp.btp}
Let $s,l, s^{j}\in\N^{\ge 1}$, $j\in[l]$, be fixed such that $s=\sum_{j=1}^l s^{(j)}$. 
Let $\mathcal{S}$ be the set of real symmetric matrices of size $s$ in a block diagonal form:
\begin{equation}\label{eq:block.diagonal.form}
    \mathbf X=\diag(\mathbf X_1,\dots,\mathbf X_l)\,,
\end{equation}
such that $\mathbf X_j$ is of size $s^{(j)}$, $j\in [l]$.
Let $\mathcal{S^+}$ be the set of all $\mathbf X\in \mathcal S$ such that $\mathbf X\succeq 0$, i.e., $\mathbf X$ has only nonnegative eigenvalues.
Then $\mathcal S$ is a Hilbert space with scalar product $\left<\mathbf A, \mathbf B\right> = \trace(\mathbf B^T \mathbf A)$ and $\mathcal S^+$ is a self-dual cone.

Let us consider the following SDP:
\begin{equation}\label{eq:SDP.form.0}
-\tau = \sup_{\mathbf X\in \mathcal{S}} \,\{ \,\left< \mathbf C,\mathbf X\right>\,:\,\mathcal{A} \mathbf X=\mathbf b\,,\, \mathbf X \succeq 0\,\}\,,
\end{equation}
where $\mathcal{A}:\mathcal{S}\to \R^{m}$ is a linear operator of the form
\[\mathcal{A}\mathbf X=\left[\left< \mathbf A_{1},\mathbf X\right>,\dots,\left< \mathbf A_{m},\mathbf X\right>\right]\,,\]
with $\mathbf A_{i} \in \mathcal{S}$, $i \in[m]$, $\mathbf C \in \mathcal{S}$ is the cost matrix and $\mathbf b\in \R^{m}$ is the right-hand-side vector. 

The dual of SDP \eqref{eq:SDP.form.0} reads:
\begin{equation}\label{eq:SDP.form.dual.0}
-\rho = \inf_{\mathbf z}\, \{\, \mathbf b^T\mathbf z\,:\,
\mathcal{A}^T \mathbf z-\mathbf C\succeq 0\,\}\,,
\end{equation}
where $\mathcal{A}^T:\R^{m}\to \mathcal{S}$ is the adjoint operator of $\mathcal{A}$, i.e., $\mathcal{A}^T\mathbf z=\sum_{i=1}^{m} z_i\mathbf A_{i}$. 
The following assumption will be used in the next two sections:
\begin{assumption}\label{ass:general.assump.sdp}
Consider the following conditions:
\begin{enumerate}
\item Strong duality of primal-dual \eqref{eq:SDP.form.0}-\eqref{eq:SDP.form.dual.0} holds, i.e., $\tau=\rho$ and $\tau\in\R$.
\item Primal attainability: SDP \eqref{eq:SDP.form.0} has an optimal solution.
\item Dual attainability: SDP \eqref{eq:SDP.form.dual.0} has an optimal solution.
\item Constant trace property (CTP): There exists $a>0$ such that
\begin{equation}\label{eq:constan.trace.prop}
\forall \ \mathbf X\in \mathcal S\,,\,\mathcal{A} \mathbf X=\mathbf b\Rightarrow 
\trace(\mathbf X)=a\,.
\end{equation}
\item Bounded trace property (BTP): There exists $a>0$ such that
\begin{equation}\label{eq:constan.trace.prop.btp}
\forall \ \mathbf X\in \mathcal S\,,\,\mathcal{A} \mathbf X=\mathbf b\Rightarrow 
 \trace(\mathbf X) \le  a\,.
\end{equation}
\end{enumerate}
\end{assumption}
In Assumption \ref{ass:general.assump.sdp}, conditions 1 and 5 (or 4) imply condition 2. Indeed, if condition 5 holds, the feasible set of \eqref{eq:SDP.form.0} is compact and if condition 1 holds, the feasible set of \eqref{eq:SDP.form.0} is nonempty.
Moreover, condition 2 and 5 (or 4) imply condition 1. Indeed, if condition 2 and 5 hold,
the set of optimal solutions of \eqref{eq:SDP.form.0} is nonempty and bounded. 
Then Trnovska's result \cite[Corollary 1]{trnovska2005strong} yields the desired conclusion.
\begin{remark}\label{re:convert.BTP.CTP}
If condition 5 of Assumption \ref{ass:general.assump.sdp} holds, by adding a slack variable $y$ and noting $\mathbf Y= \diag(\mathbf X,y)$, we obtain an equivalent SDP of \eqref{eq:SDP.form.0} as follows: 
\begin{equation}\label{eq:add.slack.variable}
-\tau = \sup_{\mathbf Y\in \hat{\mathcal{ S}}} \{ \left< \hat {\mathbf C},\mathbf Y\right>\,:\,\hat{\mathbf A}_i \mathbf Y=\mathbf b_i\,,\, \mathbf Y \succeq 0\,,\, \trace(\mathbf Y)=a\}\,,
\end{equation}
where $\hat{\mathcal{ S}}=\{\diag(\mathbf X,y): X\in \mathcal{ S}\,,\, y\in\R\}$, $\hat {\mathbf C}=\diag(\mathbf C,0)$ and $\hat {\mathbf A}_i=\diag(\mathbf A_i,0)$.
Obviously, SDP \eqref{eq:add.slack.variable} has CTP.
\end{remark}

\subsubsection{SDP with Constant Trace Property (CTP)}
\label{sec:sdp.ctp}
 Recall that 
$\lambda_1(\mathbf A)$ stands for the largest eigenvalue of a real symmetric matrix $\mathbf A$.

\begin{lemma} \label{lem:obtain.dual.sol}
Let conditions 1 and 4 
of Assumption \ref{ass:general.assump.sdp} hold and let
$\varphi:\R^{m}\to \R$ be the function:
\begin{equation}\label{eq:func.phik.0}
\mathbf z\mapsto \varphi(\mathbf z)\,:=\,a\lambda_1(\mathbf C-\mathcal{A}^T\mathbf z)+\mathbf b^T\mathbf z\,.
\end{equation}
Then:
\begin{equation}\label{eq:nonsmooth.hierarchy.0}
-\tau=\inf_{\mathbf{z}}\,\{\,\varphi(\mathbf z)\ :\ \mathbf z\in \R^{m}\}\,.
\end{equation}
Moreover if condition 3 
of Assumption \ref{ass:general.assump.sdp} holds, i.e., SDP \eqref{eq:SDP.form.dual.0} has an optimal solution then problem \eqref{eq:nonsmooth.hierarchy.0} has an optimal solution.
\end{lemma}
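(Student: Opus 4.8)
The plan is to prove the identity \eqref{eq:nonsmooth.hierarchy.0} by two opposite inequalities, using only the variational characterisation of the largest eigenvalue together with the constant trace property and strong duality; the attainability statement will then follow with essentially no extra work. The only classical ingredient I need is that for every $\mathbf A\in\mathcal S$ (in the block-diagonal form \eqref{eq:block.diagonal.form}) one has
\[
\lambda_1(\mathbf A)=\max\{\,\langle\mathbf A,\mathbf W\rangle:\ \mathbf W\in\mathcal S,\ \mathbf W\succeq 0,\ \trace(\mathbf W)=1\,\}\,,
\]
where ``$\ge$'' comes from choosing $\mathbf W$ supported on a unit top eigenvector of the dominant diagonal block of $\mathbf A$, and ``$\le$'' from summing the bounds $\langle\mathbf A_j,\mathbf W_j\rangle\le\lambda_1(\mathbf A_j)\trace(\mathbf W_j)$ over the blocks.

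\emph{Lower bound $\inf_{\mathbf z}\varphi(\mathbf z)\ge-\tau$.} By conditions 1 and 4 of Assumption \ref{ass:general.assump.sdp}, condition 2 also holds (as observed right after the assumption), so SDP \eqref{eq:SDP.form.0} has an optimal solution $\mathbf X^\star$: $\mathcal A\mathbf X^\star=\mathbf b$, $\mathbf X^\star\succeq0$ and $\langle\mathbf C,\mathbf X^\star\rangle=-\tau$. The CTP \eqref{eq:constan.trace.prop} then forces $\trace(\mathbf X^\star)=a>0$, so $\mathbf W^\star:=\mathbf X^\star/a\in\mathcal S$ satisfies $\mathbf W^\star\succeq0$ and $\trace(\mathbf W^\star)=1$. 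Hence, for every $\mathbf z\in\R^{m}$, using the variational formula above and $\langle\mathcal A^T\mathbf z,\mathbf X^\star\rangle=\mathbf z^{T}\mathcal A\mathbf X^\star=\mathbf b^{T}\mathbf z$,
\[
a\,\lambda_1(\mathbf C-\mathcal A^T\mathbf z)\ \ge\ \langle\mathbf C-\mathcal A^T\mathbf z,\ \mathbf X^\star\rangle\ =\ \langle\mathbf C,\mathbf X^\star\rangle-\mathbf b^{T}\mathbf z\ =\ -\tau-\mathbf b^{T}\mathbf z\,,
\]
so that $\varphi(\mathbf z)=a\,\lambda_1(\mathbf C-\mathcal A^T\mathbf z)+\mathbf b^{T}\mathbf z\ge-\tau$.

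\emph{Upper bound and attainability.} By condition 1, $-\tau=-\rho=\inf\{\mathbf b^{T}\mathbf z:\mathcal A^T\mathbf z-\mathbf C\succeq0\}$. If $\mathbf z$ is feasible for \eqref{eq:SDP.form.dual.0}, i.e. $\mathcal A^T\mathbf z-\mathbf C\succeq0$, then $\lambda_1(\mathbf C-\mathcal A^T\mathbf z)\le0$, and since $a>0$ this gives $\varphi(\mathbf z)\le\mathbf b^{T}\mathbf z$. Taking the infimum over all dual-feasible $\mathbf z$ yields $\inf_{\mathbf z}\varphi(\mathbf z)\le-\rho=-\tau$, which together with the lower bound proves \eqref{eq:nonsmooth.hierarchy.0}. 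Finally, if condition 3 holds, let $\bar{\mathbf z}$ be an optimal solution of \eqref{eq:SDP.form.dual.0}: then $\mathbf C-\mathcal A^T\bar{\mathbf z}\preceq0$ and $\mathbf b^{T}\bar{\mathbf z}=-\rho=-\tau$, whence $\varphi(\bar{\mathbf z})\le\mathbf b^{T}\bar{\mathbf z}=-\tau=\inf_{\mathbf z}\varphi(\mathbf z)$, so $\bar{\mathbf z}$ is optimal for \eqref{eq:nonsmooth.hierarchy.0}.

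The only delicate points are bookkeeping ones: making the eigenvalue variational formula legitimate on the block-diagonal Hilbert space $\mathcal S$ rather than on full symmetric matrices, and noticing that the infimum in \eqref{eq:nonsmooth.hierarchy.0} is over \emph{all} $\mathbf z\in\R^{m}$, which is precisely why the lower bound must be obtained from a primal optimal point (via the CTP) and not merely from weak duality applied to dual-feasible points.
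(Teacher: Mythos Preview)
Your proof is correct. The paper's own argument takes a slightly different (and more compact) route: it first appends the redundant trace constraint $\trace(\mathbf X)=a$ to the primal \eqref{eq:SDP.form.0}, writes the dual of this augmented problem (which acquires an extra scalar multiplier $\zeta$ with constraint $\mathcal A^T\mathbf z+\zeta\mathbf I-\mathbf C\succeq0$), observes that the optimal $\zeta$ is $\lambda_1(\mathbf C-\mathcal A^T\mathbf z)$, and then invokes strong duality to conclude. The attainability part is argued exactly as you do.

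Your two-inequality approach via the variational formula for $\lambda_1$ over the block-diagonal cone is more elementary and makes explicit one point the paper leaves implicit: the lower bound $\varphi(\mathbf z)\ge-\tau$ for \emph{every} $\mathbf z\in\R^m$ really does require a primal optimal $\mathbf X^\star$ (hence your care to invoke that conditions 1 and 4 imply condition 2). The paper's route, by contrast, hides this inside the duality of the augmented pair. What the paper's approach buys is brevity and a cleaner connection to the Helmberg--Rendl framework; what yours buys is transparency about exactly which hypotheses are used where.
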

The proof of Lemma \ref{lem:obtain.dual.sol} is postponed to Appendix \ref{proof:lem:obtain.dual.sol}.

Given $r\in\N^{\ge 1}$ and $\mathbf u_j\in \R^s$, $j\in[r]$, consider the following convex quadratic optimization problem (QP):
    \begin{equation}\label{eq:socp.sdp.sol}
    \begin{array}{rl}
        \min\limits_{ \xi\in\R^r} &\frac{1}2\left\|\mathbf b- a\mathcal{A}\left(\sum_{j=1}^r\xi_j\mathbf u_j\mathbf u_j^T\right)\right\|_2^2\\
        \text{s.t.}& \sum_{j=1}^r \xi_j=1\,;\:\xi_j\ge 0\,,\,j\in [r]\,.
        \end{array}
    \end{equation}

Next, we describe 
Algorithm \ref{alg:sol.SDP.CTP.0} to solve  SDP \eqref{eq:SDP.form.0}, which is based on nonsmooth first-order optimization methods (e.g., LMBM \cite[Algorithm 1]{haarala2007globally}).
As shown later on in Section \ref{sec:benchmark}, 
this algorithm works well in almost all cases and with significantly lower computational cost
when compared to the (currently fastest) SDP solver Mosek 9.1.

    \begin{algorithm}
    \footnotesize
    \caption{SDP-CTP}
    \label{alg:sol.SDP.CTP.0} 
    \textbf{Input:} SDP \eqref{eq:SDP.form.0} with unknown optimal value and optimal solution;\\
    \hspace*{\algorithmicindent}\hspace*{\algorithmicindent} method (T) for solving convex nonsmooth unconstrained optimization problems (NSOP). \\
    \textbf{Output:} optimal value $-\tau$ and optimal solution $\mathbf X^\star$ of SDP \eqref{eq:SDP.form.0}.
    \begin{algorithmic}[1]
    \State Compute the optimal value $-\tau$ and an optimal solution $\mathbf{\bar z}$ of the  NSOP \eqref{eq:nonsmooth.hierarchy.0} by using method (T);
 \State Compute $\lambda_1(\mathbf C-\mathcal{A}^T\mathbf{\bar z})$ and its corresponding uniform eigenvectors $\mathbf u_1,\dots,\mathbf u_r$;
 \State Compute an optimal solution $(\bar \xi_1,\dots,\bar \xi_r)$ of QP \eqref{eq:socp.sdp.sol} and set $\mathbf X^\star=a\sum_{j=1}^r \bar \xi_j\mathbf u_j\mathbf u_j^T$.
    \end{algorithmic}
    \end{algorithm}

The fact that Algorithm \ref{alg:sol.SDP.CTP.0} is well-defined
under certain conditions is a corollary of Lemma \ref{lem:obtain.dual.sol},  \ref{lem:extract.sdp.solu} and \ref{lem:LMBM.conver.guara}.
\begin{corollary}\label{coro:well-defined.sdp.by.nonsmooth}
Let conditions 1 and 4 of Assumption \ref{ass:general.assump.sdp} hold. Assume that the method (T) is globally convergent for NSOP \eqref{eq:nonsmooth.hierarchy.0} (e.g., (T) is LMBM).
Then output $-\tau$ of Algorithm \ref{alg:sol.SDP.CTP.0} is well-defined.
Moreover, if condition 3 of Assumption \ref{ass:general.assump.sdp} holds, the vector $\bar {\mathbf z}$ mentioned at Step 1 of Algorithm \ref{alg:sol.SDP.CTP.0} exists and thus the output $\mathbf X^\star$ of Algorithm \ref{alg:sol.SDP.CTP.0} is well-defined.
\end{corollary}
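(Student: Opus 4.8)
The plan is to simply unwind the three cited lemmas along the three steps of Algorithm \ref{alg:sol.SDP.CTP.0}, keeping track of what ``well-defined'' means at each line. First I would handle the output $-\tau$ (Step 1). By Lemma \ref{lem:obtain.dual.sol}, conditions 1 and 4 of Assumption \ref{ass:general.assump.sdp} imply that $-\tau$ equals the infimum over $\R^m$ of the convex nonsmooth function $\varphi$ of \eqref{eq:func.phik.0}, and that this infimum is finite (since $\tau\in\R$ by condition 1). Because the user-chosen method (T) is assumed globally convergent for the NSOP \eqref{eq:nonsmooth.hierarchy.0} — and, for the default choice, this is exactly the content of Lemma \ref{lem:LMBM.conver.guara} for LMBM on convex nonsmooth problems — the value returned at Step 1 is the optimal value of \eqref{eq:nonsmooth.hierarchy.0}, i.e.\ $-\tau$. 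Hence the first output is well-defined.

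Next I would address existence of $\bar{\mathbf z}$ and well-definedness of $\mathbf X^\star$. If condition 3 of Assumption \ref{ass:general.assump.sdp} additionally holds, the ``moreover'' part of Lemma \ref{lem:obtain.dual.sol} gives that the infimum in \eqref{eq:nonsmooth.hierarchy.0} is attained, so an optimal $\bar{\mathbf z}$ exists; global convergence of (T) then produces such a point at Step 1. At Step 2, $\lambda_1(\mathbf C-\mathcal A^T\bar{\mathbf z})$ and an orthonormal basis $\mathbf u_1,\dots,\mathbf u_r$ of the corresponding eigenspace are well-defined by finite-dimensional spectral theory. At Step 3, the convex QP \eqref{eq:socp.sdp.sol} minimizes a continuous quadratic objective over the standard $(r-1)$-simplex, which is nonempty and compact, so an optimizer $(\bar\xi_1,\dots,\bar\xi_r)$ exists and $\mathbf X^\star=a\sum_{j=1}^r\bar\xi_j\mathbf u_j\mathbf u_j^T$ is well-defined. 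Finally, Lemma \ref{lem:extract.sdp.solu} certifies that this $\mathbf X^\star$ is in fact an optimal solution of SDP \eqref{eq:SDP.form.0}, which completes the argument.

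I expect the only substantive point to be the one packaged inside Lemma \ref{lem:extract.sdp.solu}: that the optimal value of the QP \eqref{eq:socp.sdp.sol} is $0$, equivalently that some convex combination of the scaled rank-one matrices $a\,\mathbf u_j\mathbf u_j^T$ is feasible for \eqref{eq:SDP.form.0}. This is where CTP (condition 4) is essential and where I would focus. Conditions 1 and 4 force a primal optimal solution to exist (condition 2, as noted right after Assumption \ref{ass:general.assump.sdp}); by complementary slackness any such optimal $\mathbf X$ has range contained in the top eigenspace of $\mathbf C-\mathcal A^T\bar{\mathbf z}$, so $\mathbf X=a\sum_j\xi_j\mathbf u_j\mathbf u_j^T$ for some $\xi$, and $\mathcal A\mathbf X=\mathbf b$ together with $\trace\mathbf X=a$ (from \eqref{eq:constan.trace.prop}) forces $\sum_j\xi_j=1$, $\xi_j\ge0$ — exactly the feasible set of \eqref{eq:socp.sdp.sol}, at which the objective vanishes. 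Checking this range-inclusion/normalization compatibility carefully, and confirming the QP minimizer therefore reconstructs a genuine primal optimum rather than merely an approximately feasible point, is the part I would treat as the real obstacle; the rest is assembly of off-the-shelf facts.
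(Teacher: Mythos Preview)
Your proposal is correct and follows exactly the paper's approach: the paper states the corollary without proof, merely noting that it ``is a corollary of Lemma \ref{lem:obtain.dual.sol}, \ref{lem:extract.sdp.solu} and \ref{lem:LMBM.conver.guara}'', and your argument is a faithful unpacking of precisely those three lemmas along the three steps of the algorithm.

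One small remark on your last paragraph: your sketch of why the QP \eqref{eq:socp.sdp.sol} has optimal value $0$ proceeds via complementary slackness between a primal optimum $\mathbf X$ and the pair $(\bar{\mathbf z},\zeta)$ with $\zeta=\lambda_1(\mathbf C-\mathcal A^T\bar{\mathbf z})$. That is a valid route, but it is \emph{not} how the paper proves Lemma \ref{lem:extract.sdp.solu}. The paper instead uses the first-order optimality condition $\mathbf 0\in\partial\varphi(\bar{\mathbf z})$ together with the explicit subdifferential formula \eqref{eq:subgrad.phi.fo.0}: this immediately yields some $\mathbf W\in\conv(\Gamma(\mathbf C-\mathcal A^T\bar{\mathbf z}))$ with $\mathbf b=a\,\mathcal A\mathbf W$, and $\mathbf X^\star:=a\mathbf W$ is the desired feasible point. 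The paper's argument is slightly more self-contained (it does not need to separately invoke primal attainability or the duality pairing), while yours makes the connection to SDP complementary slackness more explicit; both establish the same fact.
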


\paragraph{Largest eigenvalue computation:}
\label{para:eigen.comp}
Step 1 of Algorithm \ref{alg:sol.SDP.CTP.0} (resp. Algorithm \ref{alg:sol.SDP.CTP.0.btp})
requires the largest eigenvalue and corresponding eigenvectors of $\mathbf C-\mathcal{A}^T\mathbf z$ to evaluate the function $\varphi$ (resp. $\psi$) and a subgradient of the subdifferential $\partial \varphi$ (resp. $\partial \psi$) given in Proposition \ref{prop:properties.phi.0} (resp. Proposition \ref{prop:properties.phi.0.btp}) at $\mathbf z$. 
Fortunately, solving the eigenvalue problem for $\mathbf C-\mathcal{A}^T\mathbf z\in \mathcal S$ can be done on every block of $\mathbf C-\mathcal{A}^T\mathbf z$. 
Indeed, with $\mathbf X\in \mathcal S$ as in \eqref{eq:block.diagonal.form}, 
\[\lambda(\mathbf X)=\lambda(\mathbf X_1)\cup \dots\cup\lambda(\mathbf X_l)\,,\]
where $\lambda(\mathbf A)$ is the set of all eigenvalues $\lambda_1(\mathbf A)\ge \dots \ge \lambda_{t}(\mathbf A)$ for every real symmetric matrix $\mathbf A$ of size $t$. 
In particular, 
\[\lambda_1(\mathbf X)=\max\{\lambda_1(\mathbf X_1), \dots,\lambda_1(\mathbf X_l)\}\,.\]
If $\mathbf u\in \R^{s^{(j)}}$ is an eigenvector of $\mathbf X_j$ corresponding to the eigenvalue $\lambda_i(\mathbf X_j)$ for some $i\in[s^{(j)}]$ and $j\in[l]$, by adding zeros entries in $\mathbf u$, 
\[\bar {\mathbf u}=(\mathbf 0_{\R^{s^{(1)}+\dots+s^{(j-1)}}},\mathbf u,\mathbf 0_{\R^{s^{(j+1)}+\dots+s^{(l)}}})\]
is an eigenvector of $\mathbf X=\diag(\mathbf X_1,\dots,\mathbf X_l)$ corresponding to  $\lambda_i(\mathbf X_j)$. 

The interested reader can refer to Lanczos algorithm in \cite{lanczos1950iteration} and its modified version \cite{ojalvo1970vibration} to solve largest eigenvalue problems of symmetric matrices of large sizes.

\begin{remark}
Let conditions 1, 2 and 5 of Assumption \ref{ass:general.assump.sdp} hold. 
We keep all notation from Remark \ref{re:convert.BTP.CTP}. 
By applying Lemma \ref{lem:obtain.dual.sol} for SDP \eqref{eq:add.slack.variable} with CTP, one has
\begin{equation}\label{eq:app.Lem.CTP}
-\tau=\inf\{\,\,a\lambda_1(\hat {\mathbf C}-\hat {\mathcal{A}}^T\mathbf z)+\mathbf b^T\mathbf z\ :\ \mathbf z\in \R^{m}\}\,,
\end{equation}
where $\hat {\mathcal{A}}^T\mathbf z=\sum_{i=1}^{m} z_i\hat {\mathbf A}_{i}$. 
Note that $\hat {\mathbf C}-\hat {\mathcal{A}}^T\mathbf z=\diag(\mathbf C- {\mathcal{A}}^T\mathbf z,0)$.
It implies that $\lambda_1(\hat {\mathbf C}-\hat {\mathcal{A}}^T\mathbf z)=\max\{\lambda_1(\mathbf C- {\mathcal{A}}^T\mathbf z),0\}$.
Thus, \eqref{eq:app.Lem.CTP} can be rewritten as 
\begin{equation}\label{eq:pre.BTP}
-\tau=\inf\{\,\,a\max\{\lambda_1(\mathbf C- {\mathcal{A}}^T\mathbf z),0\}+\mathbf b^T\mathbf z\ :\ \mathbf z\in \R^{m}\}\,.
\end{equation}

\end{remark}
In the next section, we consider the spectral formulation \eqref{eq:pre.BTP} introduced by Ding et al. in \cite[Section 6]{ding2019optimal}.
\subsubsection{SDP with Bounded Trace Property (BTP)}
\label{sec:sdp.btp}
In the last subsection, we have seen that SDPs with CTP can be solved efficiently with first-order methods. 
Similar results 
can be obtained for the larger class of SDPs with
the weaker \emph{bounded trace property} (BTP).
In particular the semidefinite relaxations 
of the Moment-SOS hierarchy associated with
a POP on a compact semialgebraic set have 
the BTP. 
So in principle there is no need
to add auxiliary ``slack" variables  to obtain an equivalent CTP-POP, as shown in Remark \ref{re:convert.BTP.CTP}. 
However, numerical experiments of Section \ref{sec:benchmark} suggest that the CTP is 
a highly desirable property 
that justifies addition of auxiliary variables.

 The analogue of Lemma \ref{lem:obtain.dual.sol} for BTP reads:
\begin{lemma} \label{lem:obtain.dual.sol.btp}
Let conditions 1, 2 and 5 of Assumption \ref{ass:general.assump.sdp} hold, and let $\psi:\R^{m}\to \R$ be the function:
\begin{equation}\label{eq:func.phik.0.btp}
\mathbf z\mapsto \psi(\mathbf z)\,:=\, a\max\{\lambda_1(\mathbf C-\mathcal{A}^T\mathbf z),0\}+\mathbf b^T\mathbf z\,.
\end{equation}
Then
\begin{equation}\label{eq:nonsmooth.hierarchy.0.btp}
-\tau=\inf_{\mathbf{z}}\,\{\,\psi(\mathbf z)\ :\ \mathbf z\in \R^{m}\}\,.
\end{equation}
Moreover if condition 3 of Assumption \ref{ass:general.assump.sdp} holds, then problem \eqref{eq:nonsmooth.hierarchy.0.btp} has an optimal solution.
\end{lemma}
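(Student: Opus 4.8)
The plan is to follow the proof of Lemma~\ref{lem:obtain.dual.sol} essentially verbatim, with the affine map $\mathbf z\mapsto a\lambda_1(\mathbf C-\mathcal{A}^T\mathbf z)$ replaced by its ``BTP version'' $\mathbf z\mapsto a\max\{\lambda_1(\mathbf C-\mathcal{A}^T\mathbf z),0\}$; the weakening from CTP to BTP is absorbed into a single elementary matrix inequality. Equivalently one could invoke Remark~\ref{re:convert.BTP.CTP}: apply Lemma~\ref{lem:obtain.dual.sol} to the lifted CTP-SDP~\eqref{eq:add.slack.variable}, whose extra trace equation introduces a dual coordinate $z_0$; since $\hat{\mathbf C}-\hat{\mathcal A}^T\mathbf z-z_0 I=\diag(\mathbf C-\mathcal{A}^T\mathbf z-z_0 I_s,\,-z_0)$ has largest eigenvalue $\max\{\lambda_1(\mathbf C-\mathcal{A}^T\mathbf z),0\}-z_0$, the $z_0$-terms cancel in the objective and one recovers~\eqref{eq:nonsmooth.hierarchy.0.btp}. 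I will give the direct argument, which is shorter and makes the ``moreover'' part transparent.

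First I would prove $-\tau\le\inf_{\mathbf z}\psi(\mathbf z)$. Fix a feasible point $\mathbf X\in\mathcal S$ of~\eqref{eq:SDP.form.0}, so that $\mathbf X\succeq0$ and $\mathcal{A}\mathbf X=\mathbf b$; then $0\le\trace(\mathbf X)\le a$ by condition~5 (BTP) together with positive semidefiniteness. For every $\mathbf z\in\R^m$, writing $\mathbf M:=\mathbf C-\mathcal{A}^T\mathbf z$ and using $\langle\mathcal{A}^T\mathbf z,\mathbf X\rangle=\mathbf z^T(\mathcal{A}\mathbf X)=\mathbf b^T\mathbf z$, one gets $\langle\mathbf C,\mathbf X\rangle=\langle\mathbf M,\mathbf X\rangle+\mathbf b^T\mathbf z$. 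The key estimate is $\langle\mathbf M,\mathbf X\rangle\le a\max\{\lambda_1(\mathbf M),0\}$: a spectral decomposition of $\mathbf X\succeq0$ gives $\langle\mathbf M,\mathbf X\rangle\le\lambda_1(\mathbf M)\trace(\mathbf X)$, and then a two-line case split on the sign of $\lambda_1(\mathbf M)$ (using $\trace(\mathbf X)\le a$ when $\lambda_1(\mathbf M)\ge0$, and $\trace(\mathbf X)\ge0$ when $\lambda_1(\mathbf M)<0$) yields the claim. Hence $\langle\mathbf C,\mathbf X\rangle\le\psi(\mathbf z)$; taking the supremum over feasible $\mathbf X$ and then the infimum over $\mathbf z$ gives $-\tau\le\inf_{\mathbf z}\psi(\mathbf z)$.

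Next I would prove the reverse inequality using strong duality (condition~1): $-\tau=-\rho=\inf\{\mathbf b^T\mathbf z:\mathcal{A}^T\mathbf z-\mathbf C\succeq0\}$, and since $\tau=\rho\in\R$ the dual feasible set is nonempty. Any dual feasible $\mathbf z$ satisfies $\mathbf C-\mathcal{A}^T\mathbf z\preceq0$, hence $\lambda_1(\mathbf C-\mathcal{A}^T\mathbf z)\le0$ and $\psi(\mathbf z)=\mathbf b^T\mathbf z$; therefore $\inf_{\mathbf z}\psi(\mathbf z)\le\inf\{\mathbf b^T\mathbf z:\mathcal{A}^T\mathbf z-\mathbf C\succeq0\}=-\rho=-\tau$. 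Combining the two bounds gives~\eqref{eq:nonsmooth.hierarchy.0.btp}. For the ``moreover'' part, if condition~3 holds, pick $\mathbf z^\star$ attaining the dual optimum, so $\mathbf b^T\mathbf z^\star=-\rho=-\tau$ and $\mathbf C-\mathcal{A}^T\mathbf z^\star\preceq0$; then $\psi(\mathbf z^\star)=\mathbf b^T\mathbf z^\star=-\tau=\inf_{\mathbf z}\psi(\mathbf z)$, so $\mathbf z^\star$ is a minimizer of~\eqref{eq:nonsmooth.hierarchy.0.btp}.

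The only step that is genuinely new relative to Lemma~\ref{lem:obtain.dual.sol} is the bound $\langle\mathbf M,\mathbf X\rangle\le a\max\{\lambda_1(\mathbf M),0\}$, where replacing the equality $\trace(\mathbf X)=a$ by the inequality $\trace(\mathbf X)\le a$ forces the case distinction and produces the $\max\{\cdot,0\}$; I expect this to be the crux, although it is elementary. Note that condition~2 (primal attainability) is not actually needed for the direct argument above --- it is implied by conditions~1 and~5, as recorded just after Assumption~\ref{ass:general.assump.sdp} --- but it would be used if one instead derived the result through the lifted CTP-SDP~\eqref{eq:add.slack.variable}, in order to obtain its strong duality from Trnovska's theorem rather than by hand.
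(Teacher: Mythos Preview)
Your proof is correct, but it takes a genuinely different route from the paper's. The paper's proof uses condition~2 substantively: it fixes an optimal primal solution $\mathbf X^\star$, sets $\ushort a:=\trace(\mathbf X^\star)\in(0,a]$, and argues (``similarly to the proof of Lemma~\ref{lem:obtain.dual.sol}'') that adding the redundant-at-the-optimum constraint $\trace(\mathbf X)=\ushort a$ yields the auxiliary spectral identity $-\tau=\inf_{\mathbf z}\{\ushort a\,\lambda_1(\mathbf C-\mathcal{A}^T\mathbf z)+\mathbf b^T\mathbf z\}$; the lower bound $\psi(\mathbf z)\ge-\tau$ is then obtained by comparing $\psi$ with this auxiliary function in the case $\lambda_1>0$ (using $a\ge\ushort a$) and with the dual SDP in the case $\lambda_1\le0$. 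Your argument bypasses $\ushort a$ entirely: the single inequality $\langle\mathbf M,\mathbf X\rangle\le\lambda_1(\mathbf M)\,\trace(\mathbf X)\le a\max\{\lambda_1(\mathbf M),0\}$ for $\mathbf X\succeq0$ with $\trace(\mathbf X)\le a$ gives weak duality $\langle\mathbf C,\mathbf X\rangle\le\psi(\mathbf z)$ directly. This is cleaner, and --- as you correctly observe --- it shows that condition~2 is superfluous in the hypotheses (it is already implied by conditions~1 and~5). The reverse inequality and the ``moreover'' part are handled identically in both proofs, via dual-feasible points where $\psi(\mathbf z)=\mathbf b^T\mathbf z$. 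Your sketch of the alternative route through Remark~\ref{re:convert.BTP.CTP} is also valid and is essentially the informal derivation the paper gives in \eqref{eq:app.Lem.CTP}--\eqref{eq:pre.BTP} just before stating the lemma.
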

The proof of Lemma \ref{lem:obtain.dual.sol.btp} is postponed to Appendix \ref{proof:lem:obtain.dual.sol.btp}.

Given $r\in\N^{\ge 1}$, $\mathbf u_j\in \R^s$, $j\in[r]$ and $\bar{\mathbf z}\in\R^m$, consider the convex quadratic optimization problem (QP):
    \begin{equation}\label{eq:socp.sdp.sol.btp}
    \begin{array}{rl}
        \min\limits_{ \xi\in\R^r} &\frac{1}2\left\|\mathbf b- \mathcal{A}\left(\sum_{j=1}^r\xi_j\mathbf u_j\mathbf u_j^T\right)\right\|_2^2\\
        \text{s.t.}& \xi_j\ge 0\,,\,j\in [r]\,, \\
        &\sum_{j=1}^r \xi_j \begin{cases} 
        =0 &\text{ if } \lambda_1(\mathbf C-\mathcal{A}^T\mathbf{\bar z})< 0\,,\\
        \le a &\text{ if } \lambda_1(\mathbf C-\mathcal{A}^T\mathbf{\bar z})= 0\,.\\
        = a &\text{ otherwise}\,.
        \end{cases}
        \end{array}
    \end{equation}

We next describe 
Algorithm \ref{alg:sol.SDP.CTP.0.btp} to solve  SDP \eqref{eq:SDP.form.0}. As Algorithm
\ref{alg:sol.SDP.CTP.0}, it is also based on 
nonsmooth optimization methods such as LMBM. 

    \begin{algorithm}
    \footnotesize
    \caption{SDP-BTP}
    \label{alg:sol.SDP.CTP.0.btp} 
    \textbf{Input:} SDP \eqref{eq:SDP.form.0} with unknown optimal value and optimal solution;\\
    \hspace*{\algorithmicindent}\hspace*{\algorithmicindent} method (T) for solving convex NSOP. \\
    \textbf{Output:} optimal value $-\tau$ and optimal solution $\mathbf X^\star$ of SDP \eqref{eq:SDP.form.0}.
    \begin{algorithmic}[1]
    \State Compute the optimal value $-\tau$ and an optimal solution $\mathbf{\bar z}$ of NSOP \eqref{eq:nonsmooth.hierarchy.0.btp} by using method (T);
 \State Compute $\lambda_1(\mathbf C-\mathcal{A}^T\mathbf{\bar z})$ and its corresponding uniform eigenvectors $\mathbf u_1,\dots,\mathbf u_r$;
 \State Compute an optimal solution $(\bar \xi_1,\dots,\bar \xi_r)$ of QP \eqref{eq:socp.sdp.sol.btp}  and set $\mathbf X^\star=\sum_{j=1}^r \bar \xi_j\mathbf u_j\mathbf u_j^T$.
    \end{algorithmic}
    \end{algorithm}

The next result is a consequence of 
Lemma \ref{lem:obtain.dual.sol.btp},  \ref{lem:extract.sdp.solu.btp} and \ref{lem:LMBM.conver.guara.btp}.

\begin{corollary}\label{coro:well-defined.sdp.by.nonsmooth.btp}
Let conditions 1, 2 and 5 of Assumption \ref{ass:general.assump.sdp} hold. Assume that method (T) is globally convergent for NSOP \eqref{eq:nonsmooth.hierarchy.0.btp} (e.g., (T) is LMBM).
Then the output $-\tau$ of Algorithm \ref{alg:sol.SDP.CTP.0.btp} is well-defined.
Moreover, if condition 3 of Assumption \ref{ass:general.assump.sdp} holds, the output $\mathbf X^\star$ of Algorithm \ref{alg:sol.SDP.CTP.0.btp}  is well-defined.
\end{corollary}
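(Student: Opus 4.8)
The plan is to glue together the three lemmas cited in the statement, splitting the argument according to which outputs of Algorithm~\ref{alg:sol.SDP.CTP.0.btp} are at stake: the scalar $-\tau$ (for which conditions 1, 2, 5 of Assumption~\ref{ass:general.assump.sdp} suffice) and the matrix $\mathbf X^\star$ (for which condition 3 is also needed).

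\textbf{The output $-\tau$.} By Lemma~\ref{lem:obtain.dual.sol.btp}, conditions 1, 2 and 5 of Assumption~\ref{ass:general.assump.sdp} imply that the NSOP~\eqref{eq:nonsmooth.hierarchy.0.btp} has optimal value exactly $-\tau$. The function $\psi$ in~\eqref{eq:func.phik.0.btp} is convex, being a maximum of the affine term $\mathbf b^T\mathbf z$ with $a\lambda_1(\mathbf C-\mathcal{A}^T\mathbf z)+\mathbf b^T\mathbf z$, and $\lambda_1$ composed with an affine map is convex; hence~\eqref{eq:nonsmooth.hierarchy.0.btp} is a genuine convex NSOP to which the hypothesis on method~(T) applies. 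Since~(T) is globally convergent for~\eqref{eq:nonsmooth.hierarchy.0.btp} — which for LMBM is precisely the content of Lemma~\ref{lem:LMBM.conver.guara.btp} — Step 1 returns this optimal value, so the first output is well-defined.

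\textbf{The output $\mathbf X^\star$.} Assume in addition condition 3. Then the ``moreover'' part of Lemma~\ref{lem:obtain.dual.sol.btp} guarantees that~\eqref{eq:nonsmooth.hierarchy.0.btp} has an optimal solution; combined with the global convergence of~(T), Step 1 yields an actual minimizer $\bar{\mathbf z}\in\R^m$. Step 2 is unconditionally executable: one computes $\lambda_1(\mathbf C-\mathcal{A}^T\bar{\mathbf z})$ and an orthonormal family $\mathbf u_1,\dots,\mathbf u_r$ spanning its eigenspace, working block by block as explained in the ``Largest eigenvalue computation'' paragraph. Step 3 solves the QP~\eqref{eq:socp.sdp.sol.btp}: its feasible set is always nonempty ($\xi=0$ works in the branches $\lambda_1<0$ and $\lambda_1=0$, and $\xi=(a,0,\dots,0)$ in the branch $\lambda_1>0$ since $a>0$ and $r\ge 1$), closed and bounded, while the objective is convex and continuous; hence a minimizer $(\bar\xi_1,\dots,\bar\xi_r)$ exists and $\mathbf X^\star=\sum_{j=1}^r\bar\xi_j\mathbf u_j\mathbf u_j^T$ is well-defined. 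Finally, Lemma~\ref{lem:extract.sdp.solu.btp} certifies that this $\mathbf X^\star$ is an optimal solution of SDP~\eqref{eq:SDP.form.0}, which is the content of ``well-defined'' here.

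\textbf{Main obstacle.} The only delicate point is passing from ``method~(T) is globally convergent'' to ``(T) returns an honest minimizer $\bar{\mathbf z}$'' rather than merely a minimizing sequence; this is exactly what the hypothesis on~(T) encodes and, for the default choice, what Lemma~\ref{lem:LMBM.conver.guara.btp} supplies. Everything else is a routine assembly of the three lemmas, the only elementary verification being the nonemptiness of the feasible set of~\eqref{eq:socp.sdp.sol.btp} recorded above.
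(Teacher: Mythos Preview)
Your proposal is correct and follows the same route the paper indicates: the paper offers no explicit proof but simply declares the corollary ``a consequence of Lemma~\ref{lem:obtain.dual.sol.btp}, \ref{lem:extract.sdp.solu.btp} and \ref{lem:LMBM.conver.guara.btp}'', and you have spelled out exactly how those three lemmas assemble (including the elementary check that the feasible set of~\eqref{eq:socp.sdp.sol.btp} is nonempty and compact, which the paper omits). Your ``main obstacle'' paragraph also correctly identifies the only point where one must read the hypothesis on~(T) carefully, matching the paper's implicit use of Lemma~\ref{lem:LMBM.conver.guara.btp} and the parallel wording in Corollary~\ref{coro:well-defined.sdp.by.nonsmooth}.
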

\section{Applications}
\label{sec:applica.SDP-CTP}
\subsection{Polynomial optimization}
\label{sec:POP.on.compact.set}
We consider the following POP:
\begin{equation}
f^\star:=\inf \{f(\mathbf x)\ :\ \mathbf x\in S(g,h)\}\,,
\end{equation}
where $S(g,h)$ is defined as in \eqref{eq:variety-V.ball} with $l_g$ (resp. $l_h$) being the number of inequality (resp. equality) constraints.
Assume that $S(g,h)\subset B_R^n$. 
\begin{remark}
\label{re:BTP.for.POP}
By setting $\mathbf X:=\diag(\mathbf M_k(\mathbf y),\mathbf M_{k-\lceil g_1 \rceil}(g_1\mathbf y),\dots,\mathbf M_{k-\lceil g_{l_g} \rceil}(g_{l_g}\mathbf y))$ and using the upper bound $\trace(\mathbf X)\le \bar a_k$ with
\begin{equation}\label{eq:bound.trace.BTP.POP}
    \bar a_k:= R^k\left( \stirlingii {n+k}{n}+\sum_{i=1}^{l_g} \|g_i\|_1\stirlingii {n+k-\lceil g_i\rceil}{n}\right)\,,
\end{equation}
SDP \eqref{eq:moment.hierarchy.ball} can be converted to an equivalent SDP with BTP, thanks to the absolute upper bound for each moment variable $|y_\alpha|\le R^{|\alpha|/2}$, $\alpha\in\N^n$.
In principle, we can solve this SDP by applying directly Algorithm \ref{alg:sol.SDP.CTP.0.btp}. 
However, in our experiments presented in Section \ref{sec:benchmark} this method is not only inefficient but also provides output with low accuracy.
\end{remark}
In order to overcome the accuracy issue mentioned in Remark \ref{re:BTP.for.POP}, we convert every POP 
to a CTP-POP (i.e., a new POP formulation with CTP) by adding
slack variables associated with inequality constraints.
In the sequel, we consider three particular cases: equality constrained POPs on a sphere in Section \ref{sec:POP.sphere}, constrained POPs with single inequality (ball) constraint in Section \ref{sec:POP.compact.case1}, and constrained POPs on a ball in Section \ref{sec:POP.compact.case2}.
\subsubsection{Equality constrained POPs on a sphere}
\label{sec:POP.sphere}
Assume that $l_g=0$ and $h_1=\bar R-\|\mathbf x\|_2^2$.
In this case, we consider equality constrained POPs on a sphere, presented in Section \ref{sec:pop.on.sphere}.
We propose to reduce SDP \eqref{eq:moment.hierarchy} to an NSOP.
For each $k\in\N^n_k$, let $(\theta_{k,\alpha})_{\alpha\in\N^n_k}$ be the finite sequence of positive real numbers such that 
\[(1+\|\mathbf
x\|_2^2)^k=\sum_{\alpha\in\N^n_k}\theta_{k,\alpha}x^{2\alpha}\,,\]
and define the diagonal matrix 
\begin{equation}\label{eq:P.mat}
\mathbf P_{n,k}:=\diag((\theta_{k,\alpha}^{1/2})_{\alpha\in\N^n_k})\,.
\end{equation}
For every $k\in\N$, since $\mathbf P_{n,k}\succ 0$, SDP \eqref{eq:moment.hierarchy} is equivalent to SDP:
\begin{equation}\label{eq:moment.hierarchy.multiply.diagonal}
    \begin{array}{rl}
\tau_k = \inf \limits_{\mathbf y \in {\R^{\stirlingii [0.7]n {2k}} }} & L_{\mathbf y}(f)\\
\qquad \text{s.t. }& y_0\,=\,1\,;\:\mathbf P_{n,k}\mathbf M_k(\mathbf y)\mathbf P_{n,k} \succeq 0\,,\\
&\mathbf M_{k - \lceil h_j \rceil }(h_j\;\mathbf y)   = 0\,,\,j\in[l_h]\,.
\end{array}
\end{equation}

For every $k\in\N$, note $a_k:=({\bar R}+1)^{k}$. We will use the following lemma:
\begin{lemma} \label{lem:trace.constant.property}
For all $k\in\N$,
\[\left.
\begin{array}{rl}
&\mathbf M_{k - 1 }(({\bar R}-\|\mathbf x\|_2^2)\;\mathbf y)   = 0\,,\\
&y_0=1
\end{array}
\right\}\Rightarrow \trace(\mathbf P_{n,k} \mathbf M_k(\mathbf y)\mathbf P_{n,k})=a_k\,.\]\end{lemma}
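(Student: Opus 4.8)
The plan is to identify the trace in question with the Riesz functional $L_{\mathbf y}$ applied to the polynomial $(1+\|\mathbf x\|_2^2)^k$, and then to exploit the localizing constraint to collapse this evaluation down to $y_0$.

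First I would compute the trace directly. Since $\mathbf P_{n,k}$ is the diagonal matrix with entries $\theta_{k,\alpha}^{1/2}$ indexed by $\alpha\in\N^n_k$, and the diagonal of $\mathbf M_k(\mathbf y)$ consists of the entries $y_{2\alpha}$, conjugation by $\mathbf P_{n,k}$ multiplies the $(\alpha,\alpha)$ entry by $\theta_{k,\alpha}$, so
\[
\trace(\mathbf P_{n,k}\mathbf M_k(\mathbf y)\mathbf P_{n,k})=\sum_{\alpha\in\N^n_k}\theta_{k,\alpha}\,y_{2\alpha}=L_{\mathbf y}\!\left(\sum_{\alpha\in\N^n_k}\theta_{k,\alpha}\,\x^{2\alpha}\right)=L_{\mathbf y}\!\left((1+\|\mathbf x\|_2^2)^k\right),
\]
the last equality being precisely the defining identity of the numbers $\theta_{k,\alpha}$.

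Next I would use the telescoping $1+\|\mathbf x\|_2^2=(\bar R+1)-(\bar R-\|\mathbf x\|_2^2)$ and the binomial theorem:
\[
(1+\|\mathbf x\|_2^2)^k=(\bar R+1)^k+\sum_{j=1}^k\binom{k}{j}(-1)^j(\bar R+1)^{k-j}(\bar R-\|\mathbf x\|_2^2)^j.
\]
Applying $L_{\mathbf y}$, using linearity and $L_{\mathbf y}(1)=y_0=1$, it remains to show $L_{\mathbf y}\big((\bar R-\|\mathbf x\|_2^2)^j\big)=0$ for $j=1,\dots,k$. This is the only place the hypothesis enters: the identity $\mathbf M_{k-1}((\bar R-\|\mathbf x\|_2^2)\mathbf y)=0$ means $L_{\mathbf y}\big((\bar R-\|\mathbf x\|_2^2)\,\x^{\alpha+\beta}\big)=0$ for all $\alpha,\beta\in\N^n_{k-1}$, and since every monomial of degree at most $2(k-1)$ can be written as $\x^{\alpha+\beta}$ with $\alpha,\beta\in\N^n_{k-1}$, linearity gives $L_{\mathbf y}\big((\bar R-\|\mathbf x\|_2^2)\,p\big)=0$ for every $p\in\R[\x]$ with $\deg p\le 2(k-1)$. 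For $1\le j\le k$ the choice $p:=(\bar R-\|\mathbf x\|_2^2)^{j-1}$ has degree $2(j-1)\le 2(k-1)$, so indeed $L_{\mathbf y}\big((\bar R-\|\mathbf x\|_2^2)^j\big)=0$. Combining the three steps yields $\trace(\mathbf P_{n,k}\mathbf M_k(\mathbf y)\mathbf P_{n,k})=(\bar R+1)^k=a_k$.

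There is no genuine obstacle: the whole content lies in recognizing the algebraic identity of the first step and the telescoping $1+\|\mathbf x\|_2^2=(\bar R+1)-(\bar R-\|\mathbf x\|_2^2)$. The only point needing a word of care is the degenerate case $k=0$, where $\mathbf M_{-1}$ and the binomial sum are empty and the claim reduces to $y_0=1=a_0$.
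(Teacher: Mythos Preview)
Your proof is correct. Both you and the paper begin by identifying the trace with $L_{\mathbf y}\big((1+\|\mathbf x\|_2^2)^k\big)$ and both use the consequence of the localizing constraint that $L_{\mathbf y}\big((\bar R-\|\mathbf x\|_2^2)\,p\big)=0$ whenever $\deg p\le 2(k-1)$. The difference is in the binomial decomposition: the paper expands $(1+\|\mathbf x\|_2^2)^k=\sum_{r=0}^k\binom{k}{r}\|\mathbf x\|_2^{2r}$ and then uses the localizing constraint with $p=\|\mathbf x\|_2^{2r}$ to set up the recursion $L_{\mathbf y}(\|\mathbf x\|_2^{2(r+1)})=\bar R\,L_{\mathbf y}(\|\mathbf x\|_2^{2r})$, giving $L_{\mathbf y}(\|\mathbf x\|_2^{2r})=\bar R^r$ by induction and finishing with $\sum_r\binom{k}{r}\bar R^r=(\bar R+1)^k$. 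You instead expand around $\bar R+1$ and kill all the nonconstant terms in one shot by taking $p=(\bar R-\|\mathbf x\|_2^2)^{j-1}$. Your route is marginally slicker (no induction, no second binomial sum at the end); the paper's route has the small side benefit of explicitly recording the values $L_{\mathbf y}(\|\mathbf x\|_2^{2r})=\bar R^r$, which could be reused elsewhere.
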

\begin{proof}
Let $k\in\N$ be fixed. 
From $\mathbf M_{k - 1 }(({\bar R}-\|\mathbf x\|_2^2)\;\mathbf y)   = 0$, $L_{\mathbf y}(p({\bar R}-\|\mathbf x\|_2^2))=0$, for every $p\in\R[\mathbf x]_{2(k-1)}$. For every $r\in\N^{\le k-1}$, by choosing $p=\|\mathbf x\|_2^{2r}$, 
\[L_{\mathbf y}(\|\mathbf x\|_2^{2(r+1)})=-L_{\mathbf y}(\|\mathbf x\|_2^{2r}({\bar R}-\|\mathbf x\|_2^2))+{\bar R}L_{\mathbf y}(\|\mathbf x\|_2^{2r})={\bar R}L_{\mathbf y}(\|\mathbf x\|_2^{2r})\,.\]
By induction, $L_{\mathbf y}(\|\mathbf x\|_2^{2r})={\bar R}L_{\mathbf y}(\|\mathbf x\|_2^{2(r-1)})=\dots={\bar R}^kL_{\mathbf y}(\|\mathbf x\|_2^{2\times 0})={\bar R}^ky_0={\bar R}^r$, for every $r\in\N^{\le k}$. Thus,
\[\begin{array}{rl}
\trace(\mathbf P_{n,k} \mathbf M_k(\mathbf y)\mathbf P_{n,k})&=\displaystyle\sum_{\alpha\in \N^n_k} \theta_{k,\alpha}^{1/2}y_{2\alpha}\theta_{k,\alpha}^{1/2}=L_{\mathbf y}\left(\displaystyle\sum_{\alpha\in \N^n_k} \theta_{k,\alpha} x^{2\alpha}\right)\\
&=L_{\mathbf y}((1+\|\mathbf x\|_2^2)^k)=L_{\mathbf y}\left(\displaystyle\sum_{r=0}^k\binom{k}{r}\|\mathbf x\|_2^{2r}\right)\\
&=\displaystyle\sum_{r=0}^k\binom{k}{r}L_{\mathbf y}(\|\mathbf x\|_2^{2r})=\sum_{r=0}^k\binom{k}{r}{\bar R}^r=({\bar R}+1)^{k}\,.
\end{array}\]
\end{proof}
For each $k\in\N$, let us denote by $\mathcal{S}_k$ the set of symmetric matrices of size $\omega_k =\stirlingii n k$ and let $\left<\mathbf A, \mathbf B\right> = \trace(\mathbf B^T \mathbf A)$
be the usual scalar product on $\mathcal{S}_k$.
For every $k\in\N$, letting 
\begin{equation}\label{eq:convert.momentmat}
\mathbf X=\mathbf P_{n,k}\mathbf M_k(\mathbf y)\mathbf P_{n,k}\,,
\end{equation}
\eqref{eq:moment.hierarchy.multiply.diagonal} can be written 
in the form:
\begin{equation}\label{eq:SDP.form}
-\tau_k = \sup _{\mathbf X\in \mathcal{S}_k} \{ \left< \mathbf C_k,\mathbf X\right>\,:\,\mathcal{A}_k \mathbf X=\mathbf b_k\,,\, \mathbf X \succeq 0\}\,,
\end{equation}
where $\mathcal{A}_k:\mathcal{S}_k\to \R^{m_k}$ is a linear operator of the form
\[\mathcal{A}_k\mathbf X=\left[\left< \mathbf A_{k,1},\mathbf X\right>,\dots,\left< \mathbf A_{k,m_k},\mathbf X\right>\right]\,,\]
with $\mathbf A_{k,i} \in \mathcal{S}_k$, $i\in[m_k]$, $\mathbf C_k \in \mathcal{S}_k$ is the cost matrix and $\mathbf b_k\in \R^{m_k}$ is the right-hand-side vector. 
Appendix \ref{sec:convert.SDP.sphere} describes how to reduce SDP \eqref{eq:moment.hierarchy.multiply.diagonal} to the form \eqref{eq:SDP.form}.

For every $k\in \N$, the dual of SDP \eqref{eq:SDP.form} reads:
\begin{equation}\label{eq:SDP.form.dual}
-\rho_k = \inf _{\mathbf z} \,\{ \,\mathbf b_k^T\mathbf z\,:\,
\mathcal{A}_k^T \mathbf z-\mathbf C_k\succeq 0\,,\}
\end{equation}
where $\mathcal{A}_k^T:\R^{m_k}\to \mathcal{S}_k$ is the adjoint operator of $\mathcal{A}_k$, i.e., $\mathcal{A}_k^T\mathbf z=\sum_{i=1}^{m_k} z_i\mathbf A_{k,i}$.

From Lemma \ref{lem:trace.constant.property} and since $h_1={\bar R}-\|\mathbf x\|_2^2$, it implies that for every $k\in\N$,
\begin{equation}\label{eq:unit.trace.prop}
    \forall \ \mathbf X\in \mathcal S_k\,,\,\mathcal{A}_k \mathbf X=\mathbf b_k\Rightarrow \trace(\mathbf X)=a_k\,.
\end{equation}

We guarantee the strong duality, primal attainability, and dual attainability for primal-dual \eqref{eq:SDP.form}-\eqref{eq:SDP.form.dual} in the following proposition:
\begin{proposition}\label{prop:strong.duality.attainability.POP.sphere}
Let $f^\star$ be as in \eqref{eq:POP.on.variety}. Then:
\begin{enumerate}
    \item Strong duality holds for primal-dual \eqref{eq:SDP.form}-\eqref{eq:SDP.form.dual}  for large enough $k\in\N$.
    \item SDP \eqref{eq:SDP.form} has an optimal solution for large enough $k\in\N$.
    \item Assume that one of the following two conditions holds:
    \begin{enumerate}
    \item $\left<h\right>$ is real radical and the second-order sufficiency condition S2  holds at every global minimizer of  \eqref{eq:POP.on.variety};
\item $V(h)$ is finite.
\end{enumerate}
 Then  SDP \eqref{eq:SDP.form.dual} has an optimal solution for large enough $k\in\N$. In this case, $\ushort \tau_k=\ushort \rho_k=f^\star$.
 \end{enumerate}
 \end{proposition}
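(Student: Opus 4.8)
The plan is to transport everything from the moment--SOS pair \eqref{eq:moment.hierarchy}--\eqref{eq:sos.hierarchy}, for which the needed facts are already on record (the analogue of Proposition \ref{prop:strong.duality.ball} for $V(h)$ noted at the end of Section \ref{sec:pop.on.sphere}, and Theorem \ref{theo:conver.semi.hie}), to the pair \eqref{eq:SDP.form}--\eqref{eq:SDP.form.dual} via the reductions set up in this subsection. First I would record the equivalences precisely. Since $\mathbf P_{n,k}\succ 0$, the congruence $\mathbf M_k(\mathbf y)\mapsto \mathbf X=\mathbf P_{n,k}\mathbf M_k(\mathbf y)\mathbf P_{n,k}$ is a bijection preserving positive semidefiniteness, so \eqref{eq:moment.hierarchy}, \eqref{eq:moment.hierarchy.multiply.diagonal} and (after the variable elimination of Appendix \ref{sec:convert.SDP.sphere}) \eqref{eq:SDP.form} all share the optimal value $-\tau_k$ and have optimal solution sets in explicit bijection. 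Dually, an invertible congruence on the primal induces the inverse congruence on the dual, and eliminating the redundant moment entries amounts to restricting to a subspace on the dual; hence \eqref{eq:SDP.form.dual} is equivalent, in the same strong sense, to the SOS program \eqref{eq:sos.hierarchy}: it has value $-\rho_k$ and its optimal solutions correspond to SOS certificates $f-\xi=\sigma_0+\sum_j\psi_j h_j\in P_k(h)$. Thus, for \eqref{eq:SDP.form}--\eqref{eq:SDP.form.dual}, condition 2 of Assumption \ref{ass:general.assump.sdp} is equivalent to attainability of \eqref{eq:moment.hierarchy}, condition 3 to attainability of \eqref{eq:sos.hierarchy}, and ``$\tau=\rho$'' to $\tau_k=\rho_k$.

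Items 1 and 2 then follow at once: the analogue of Proposition \ref{prop:strong.duality.ball} for $V(h)$ gives, for all sufficiently large $k$, $\tau_k=\rho_k\in\R$ together with an optimal solution of \eqref{eq:moment.hierarchy}; by the equivalence, \eqref{eq:SDP.form} is attained (item 2) and strong duality $\tau_k=\rho_k\in\R$ holds for \eqref{eq:SDP.form}--\eqref{eq:SDP.form.dual} (item 1). Alternatively, item 1 can be obtained from conditions 2 and 4 of Assumption \ref{ass:general.assump.sdp}: the CTP \eqref{eq:unit.trace.prop}, a consequence of Lemma \ref{lem:trace.constant.property}, makes the feasible set of \eqref{eq:SDP.form} compact, so its optimal set is nonempty and bounded and Trnovska's result \cite[Corollary~1]{trnovska2005strong} yields strong duality --- this is exactly the implication ``2 and 4 $\Rightarrow$ 1'' recorded after Assumption \ref{ass:general.assump.sdp}.

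For item 3, assume hypothesis (a) ($\langle h\rangle$ real radical and S2 at every global minimizer of \eqref{eq:POP.on.variety}) or (b) ($V(h)$ finite). Items 3--4 of Theorem \ref{theo:conver.semi.hie} then provide some $k_0$ with $\tau_{k_0}=\rho_{k_0}=f^\star$ and $f-f^\star\in P_{k_0}(h)$, i.e. an optimal solution of \eqref{eq:sos.hierarchy} at order $k_0$. Since $P_{k_0}(h)\subseteq P_k(h)$ for every $k\ge k_0$, \eqref{eq:sos.hierarchy} is attained for all such $k$ with $\rho_k=f^\star$; as $\rho_k\le\tau_k\le f^\star$ (weak duality and the Dirac bound), also $\tau_k=f^\star$. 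Transporting through the dual equivalence produces an optimal solution of \eqref{eq:SDP.form.dual} for every $k\ge k_0$, which is condition 3, and $\tau_k=\rho_k=f^\star$.

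The only step requiring genuine care is the primal--dual bookkeeping of the first paragraph: one must verify that removing the normalization $y_0=1$ and the repeated entries of the scaled moment and localizing matrices in passing to \eqref{eq:SDP.form} preserves attainability on \emph{both} sides, and that the dual of the reduced SDP is indeed the correspondingly reduced form of \eqref{eq:sos.hierarchy}. This is routine linear algebra --- essentially the content of Appendix \ref{sec:convert.SDP.sphere} --- but it is the hinge of the argument; everything else is a direct appeal to the analogue of Josz--Henrion, to Theorem \ref{theo:conver.semi.hie}, and to the elementary ``$k\ge k_0$'' stabilization.
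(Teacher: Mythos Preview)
Your proposal is correct and follows the same route as the paper: use the equivalence between \eqref{eq:moment.hierarchy}--\eqref{eq:sos.hierarchy} and \eqref{eq:SDP.form}--\eqref{eq:SDP.form.dual} to import the analogue of Proposition~\ref{prop:strong.duality.ball} (items 1--2) and Theorem~\ref{theo:conver.semi.hie} (item 3). The paper's proof is the terse two-line version of exactly this; your additional remarks on the $k\ge k_0$ stabilization and the alternative via CTP\,$+$\,Trnovska are correct elaborations but not needed.
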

\begin{proof}
Since \eqref{eq:moment.hierarchy} (resp. \eqref{eq:sos.hierarchy}) and \eqref{eq:SDP.form} (resp. \eqref{eq:SDP.form.dual}) are equivalent, the first and second statements follow from  Proposition \ref{prop:strong.duality.ball}. 
The third statement is due to Theorem \ref{theo:conver.semi.hie}.
\end{proof}

By replacing $(\mathcal{A}_k, \mathbf{A}_{k,i},  \mathbf b_k, \mathbf C_k, \mathcal{S}_k, \omega_k, m_k, \tau_k, \rho_k, a_k)$ by $(\mathcal{A}, \mathbf{A}_{i}, \mathbf b, \mathbf C, \mathcal{S}, s, m, \tau, \rho, a)$, primal-dual \eqref{eq:SDP.form}-\eqref{eq:SDP.form.dual} becomes primal-dual \eqref{eq:SDP.form.0}-\eqref{eq:SDP.form.dual.0}, we then go back to Section \ref{sec:sdp.ctp.btp} with $l=1$.

We illustrate the conversion from SDP \eqref{eq:moment.hierarchy} to SDP \eqref{eq:SDP.form} in the following example.
\begin{example}
Consider a simple example of POP \eqref{eq:POP.on.variety} with $n=1$:  
\[-1=\inf\{x\ :\ 1-x^2=0\}\,.\]
 Then the second order moment relaxation ($k=2$) has the form:
\[\begin{array}{rl}
\tau_2 = \inf \limits_{\mathbf y} & y_1\\
\qquad \text{s.t. }
&\begin{bmatrix}
y_0 & y_1 & y_2\\
y_1 & y_2 & y_3\\
y_2 & y_3 & y_4
\end{bmatrix}\succeq 0\,,\,
\begin{bmatrix}
y_0-y_2 & y_1-y_3 \\
y_1-y_3 & y_2-y_4 
\end{bmatrix}= 0\,,\,
 y_0=1\,.
\end{array}\]
It can be rewritten as
\[\begin{array}{rl}
\tau_2 = \inf \limits_{\mathbf y} & y_1\\
\qquad \text{s.t. }
&\begin{bmatrix}
1 & y_1 & 1\\
y_1 & 1 & y_1\\
1 & y_1 & 1
\end{bmatrix}\succeq 0\,,\\
\end{array}\]
by removing equality constraints. Obviously, the positive semidefinite matrix of this form has trace 3. 

In a different way, according to Appendix \ref{sec:convert.SDP.sphere}, let us note
\[\mathbf X=\begin{bmatrix}
1 & 0 & 0\\
0 & \sqrt{2} & 0\\
0 & 0 & 1
\end{bmatrix}
\begin{bmatrix}
y_0 & y_1 & y_2\\
y_1 & y_2 & y_3\\
y_2 & y_3 & y_4
\end{bmatrix}\begin{bmatrix}
1 & 0 & 0\\
0 & \sqrt{2} & 0\\
0 & 0 & 1
\end{bmatrix}
\,,\]
to obtain
\[-\tau_2 = \sup _{\mathbf X\in \mathcal{S}_2} \{ \left< \mathbf C,\mathbf X\right>\,:\,\left< \mathbf A_i,\mathbf X\right>=b_i\,,\,i\in [5]\,,\, \mathbf X \succeq 0\}\,,\]
where $b_1=\dots=b_4=0$, $b_5=1$ and
\[\begin{array}{rl}
&\mathbf C=-\frac{\sqrt{2}}4\begin{bmatrix}
0 &1& 0\\
1 &0& 0\\ 
0& 0& 0
\end{bmatrix}\,,\,
\mathbf A_1=\frac{\sqrt{2}}2\begin{bmatrix}
0& 0& 1\\
0& -1& 0\\
1& 0& 0
\end{bmatrix}\,,\,
\mathbf A_2=\frac{1}2\begin{bmatrix}
2& 0& -1\\
0& 0& 0\\ 
-1& 0& 0
\end{bmatrix}\,,\\
&\mathbf A_3=\frac{\sqrt{2}}4\begin{bmatrix}
0& 1& 0\\
1& 0& -1\\
0& -1& 0
\end{bmatrix}\,,\,
\mathbf A_4=\frac{1}2\begin{bmatrix}
0& 0& 1\\
0& 0& 0\\
1& 0& -2
\end{bmatrix}\,,\,
\mathbf A_5=\begin{bmatrix}
1& 0& 0\\
0& 0& 0\\
0& 0& 0
\end{bmatrix}\,.
\end{array}\]
Remark that for any $\mathbf X\in \mathcal{S}_2$, 
\[(\left< \mathbf A_i,\mathbf X\right>=b_i\,,\,i\in [5])\Rightarrow\trace(\mathbf X)=4\,.\]
\end{example}
Next, we present an alternative iterative method, stated in Algorithm \ref{alg:sol.nonsmooth.hier}, to solve \eqref{eq:POP.on.variety}, based on  nonsmooth optimization methods, e.g., LMBM.  
It performs  well in practice for most cases and with significantly lower computational cost
when compared to the (currently fastest) SDP solver Mosek 9.1.

%

    \begin{algorithm}
    \footnotesize
    \caption{SpectralPOP-CTP}
    \label{alg:sol.nonsmooth.hier} 
    \textbf{Input:} POP \eqref{eq:POP.on.variety} with unknown optimal value $f^\star$ and optimal solutions;\\
    \hspace*{\algorithmicindent}\hspace*{\algorithmicindent} method (D) for  solving SDP with CTP. \\
    \textbf{Output:} increasing real sequence $(\tau_k)_{k\in\N}$ and $\x^\star\in\R^n$.
    \begin{algorithmic}[1]
    \For {$k\in\N$}{}
 \State Compute the optimal value $-\tau_k$ and an optimal solution $\mathbf X^\star$ of SDP \eqref{eq:SDP.form} by using method (D);
    \State Set $\mathbf M_k(\mathbf y^\star):=\mathbf P_{n,k}^{-1}\mathbf X^\star\mathbf P_{n,k}^{-1}$ (relying on \eqref{eq:convert.momentmat}) and extract an atom $\mathbf x^\star$ by using Henrion-Lasserre's algorithm in \cite{henrion2005detecting} from $\mathbf M_k(\mathbf y^\star)$;
    \State If $\mathbf x^\star$ exists, set $\tau_{k+j}=\tau_k$, $j\in\N^{\ge 1}$, and terminate.
    \EndFor
    \end{algorithmic}
    \end{algorithm}

Note that one can choose method (D) in Algorithm \ref{alg:sol.nonsmooth.hier} as Algorithm \ref{alg:sol.SDP.CTP.0} with LMBM solver or SketchyCGAL.
\begin{remark}
In practice, to  verify that an atom $\mathbf x^\star$ extracted in Step 3 of Algorithm \ref{alg:sol.nonsmooth.hier}  is an approximate optimal solution of POP \eqref{eq:POP.on.variety}, with given $\varepsilon\in (0,1)$, we check the following inequalities:
\[|f(\mathbf x^\star)-\tau_k|\le \varepsilon \|f\|_{\max}\text{ and }|h_j(\mathbf x^\star)|\le \varepsilon \|h_j\|_{\max}\,,\, j\in [l_g]\,,\]
where $\|p\|_{\max}:=\max_\alpha |p_\alpha|$ for any $p\in\R[x]$. 
We take $\varepsilon=0.01$ for the experiments in Section \ref{sec:benchmark}.
\end{remark}
Following Proposition \ref{pro:flatness.ball}, Corollary \ref{coro:well-defined.sdp.by.nonsmooth} and Proposition \ref{prop:strong.duality.attainability.POP.sphere}, we obtain the following corollary:
\begin{corollary}\label{coro:algorithm.well-definite}
(i) Sequence $(\tau_k)_{k\in\N}$ of Algorithm \ref{alg:sol.nonsmooth.hier} is well defined and $\tau_k\uparrow f^\star$ as $k\to\infty$.\\ 
(ii) Assume that condition (a) or (b) of Proposition \ref{prop:strong.duality.attainability.POP.sphere}.3 holds.
If there exists an optimal solution $\mathbf y^\star$ of SDP \eqref{eq:moment.hierarchy} for some order $k\in\N$ such that the flat extension condition holds, $\x^\star$ exists at the $k$-th iteration of Algorithm \ref{alg:sol.nonsmooth.hier}. 
 In this case, Algorithm \ref{alg:sol.nonsmooth.hier} terminates at the $k$-th iteration, $\x^\star$ is an optimal  solution of POP \eqref{eq:POP.on.variety} and $f^\star=\tau_k$.
\end{corollary}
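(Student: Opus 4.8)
The plan is to compose the three results quoted just above---Proposition~\ref{pro:flatness.ball}, Corollary~\ref{coro:well-defined.sdp.by.nonsmooth} and Proposition~\ref{prop:strong.duality.attainability.POP.sphere}---checking at each use which items of Assumption~\ref{ass:general.assump.sdp} hold for SDP~\eqref{eq:SDP.form}. For part~(i): I would first note that \eqref{eq:SDP.form} has the constant trace property for every~$k$, namely~\eqref{eq:unit.trace.prop}, which follows from Lemma~\ref{lem:trace.constant.property} and the choice $h_1=\bar R-\|\x\|_2^2$; so condition~4 holds. Its feasible set is nonempty---it contains the image under~\eqref{eq:convert.momentmat} of the degree-$k$ moment matrix of $\delta_{\a}$ for any $\a\in V(h)\neq\emptyset$---and, by the CTP, compact, so the linear objective attains its maximum (condition~2), whence condition~2 and the CTP give strong duality (condition~1) as observed right after Assumption~\ref{ass:general.assump.sdp} (equivalently, invoke Proposition~\ref{prop:strong.duality.attainability.POP.sphere}.1--.2). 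Hence Corollary~\ref{coro:well-defined.sdp.by.nonsmooth} applies to method~(D) (Algorithm~\ref{alg:sol.SDP.CTP.0} run with a globally convergent nonsmooth subroutine such as LMBM), so the value $-\tau_k$ computed at Step~2 of Algorithm~\ref{alg:sol.nonsmooth.hier} is well defined and, by Lemma~\ref{lem:obtain.dual.sol}, equals the optimum of~\eqref{eq:SDP.form}, hence (by~\eqref{eq:convert.momentmat}) the optimum of the moment relaxation~\eqref{eq:moment.hierarchy}. Then $\tau_k\uparrow f^\star$ follows from Theorem~\ref{theo:conver.semi.hie}: weak duality and feasibility of moment sequences of points of $V(h)$ give $\rho_k\le\tau_k\le f^\star$, the nesting of the relaxations gives $\tau_k\le\tau_{k+1}$, and $\rho_k\uparrow f^\star$ squeezes $\tau_k$; and if Step~4 terminates the loop at some iteration~$k$, Proposition~\ref{pro:flatness.ball} already gives $\tau_k=f^\star$, so freezing $\tau_{k+j}:=\tau_k$ is consistent.

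For part~(ii): assume condition~(a) or~(b) of Proposition~\ref{prop:strong.duality.attainability.POP.sphere}.3. Then dual attainability (condition~3) also holds for all large~$k$ by Proposition~\ref{prop:strong.duality.attainability.POP.sphere}.3, so Corollary~\ref{coro:well-defined.sdp.by.nonsmooth} now additionally yields an \emph{optimal} solution $\mathbf X^\star$ of~\eqref{eq:SDP.form}; undoing~\eqref{eq:convert.momentmat}, $\mathbf M_k(\mathbf y^\star):=\mathbf P_{n,k}^{-1}\mathbf X^\star\mathbf P_{n,k}^{-1}$ is an optimal solution of~\eqref{eq:moment.hierarchy}. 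Under~(a) or~(b), Theorem~\ref{theo:conver.semi.hie}.3--.4 (via finiteness of $V(h)$ in case~(b), and the real-radical-plus-S2 mechanism in case~(a)) ensures that for all large~$k$ \emph{every} optimal solution of~\eqref{eq:moment.hierarchy} satisfies the flat extension condition $\rank\mathbf M_{k-w}(\mathbf y^\star)=\rank\mathbf M_k(\mathbf y^\star)$ of Proposition~\ref{pro:flatness.ball}; so the hypothesis of~(ii) holds for such~$k$, and the $\mathbf M_k(\mathbf y^\star)$ produced at Step~3 is flat. Proposition~\ref{pro:flatness.ball} then provides an $r$-atomic representing measure of $\mathbf y^\star$ supported on global minimizers of~\eqref{eq:POP.on.variety}, with $\tau_k=f^\star$, and Henrion-Lasserre's algorithm~\cite{henrion2005detecting} extracts an atom~$\x^\star$. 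Thus $\x^\star$ exists at iteration~$k$, Step~4 terminates the loop, $\x^\star$ is a global minimizer of~\eqref{eq:POP.on.variety}, and $f^\star=\tau_k$.

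The one genuinely delicate point is the accounting of the threshold on~$k$. Corollary~\ref{coro:well-defined.sdp.by.nonsmooth} yields a \emph{well-defined optimal} $\mathbf X^\star$ only once $k$ is large enough for dual attainability, whereas the hypothesis of~(ii) a priori supplies only \emph{some} order~$k$ with a flat optimal solution; one must therefore argue that under~(a)/(b) the whole set of optimal solutions is flat past a fixed threshold, and that the loop reaches an order at which \emph{both} the computation of $\mathbf X^\star$ is valid \emph{and} $\mathbf M_k(\mathbf y^\star)$ is flat (so that $\x^\star$ is actually extracted, not merely known to exist for \emph{some} optimal matrix). Everything else is a direct composition of the quoted results.
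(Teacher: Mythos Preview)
Your proof follows the same high-level route as the paper---composing Proposition~\ref{pro:flatness.ball}, Corollary~\ref{coro:well-defined.sdp.by.nonsmooth}, and Proposition~\ref{prop:strong.duality.attainability.POP.sphere}---and your treatment of part~(i) matches. For part~(ii), however, the paper handles the ``threshold on~$k$'' issue you flag differently and more directly: rather than invoking Proposition~\ref{prop:strong.duality.attainability.POP.sphere}.3 to obtain dual attainability only for \emph{large}~$k$, the paper observes (in the remark immediately following the corollary) that the flat extension hypothesis at the \emph{given} order~$k$ already forces the SOS problem~\eqref{eq:sos.hierarchy} to have an optimal solution there (by \cite[Theorem~3.4(b)]{lasserre2001global} together with $\tau_k=\rho_k$), hence the dual SDP~\eqref{eq:SDP.form.dual} attains at that same~$k$, and so Corollary~\ref{coro:well-defined.sdp.by.nonsmooth} delivers $\mathbf X^\star$ at that~$k$. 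This dissolves the mismatch you worry about between the hypothesized order and the attainability threshold, and it does so without appealing to the stronger claim that \emph{every} optimal solution is flat past a threshold. Your second subtlety---that the $\mathbf X^\star$ returned by the algorithm need not a priori correspond to the flat $\mathbf y^\star$ postulated in the hypothesis---is not argued in the paper either; the paper simply writes ``$\mathbf X^\star$ exists, which in turn implies the existence of~$\mathbf x^\star$,'' so on that point your caution is well placed.
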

In Corollary \ref{coro:algorithm.well-definite}, the flat extension condition implies that the SOS problem \eqref{eq:sos.hierarchy} has an optimal solution (due to \cite[Theorem 3.4 (b)]{lasserre2001global} and $\tau_k=\rho_k$), so that SDP \eqref{eq:SDP.form.dual} has an optimal solution.
In this case, $\mathbf X^\star$ exists, which in turn implies the existence of $\mathbf x^\star$.
%

In the two following subsections, we consider POPs on general compact sets as stated in Section \ref{sec:pop.on.sphere.ball}.

\subsubsection{Constrained POPs with single inequality (ball) constraint}
\label{sec:POP.compact.case1}
Assume that $l_g=1$  and $g_1=R-\|\mathbf x\|_2^2$.
In this case, $g=\{R-\|\mathbf x\|_2^2\}$. 
Let us show that POP \eqref{eq:POP.on.variety.ball} can be reduced to an equality constrained POP on a sphere.
By adding one slack variable $x_{n+1}$, the inequality constraint $R-\|\mathbf x\|_2^2\ge 0$ can be rewritten as an equality constraint $R-\|\mathbf x\|_2^2-x_{i+n}^2=0$ and so
\begin{equation}\label{POP.convert.sphere.0}
f^\star:=\inf\{\,f(\mathbf x)\ : \ (\mathbf x,x_{n+1})\in V(\bar h)\}\,,
\end{equation}
where $\bar h:=h\cup\{R-\|\mathbf x\|_2^2-x_{n+1}^2\}\subset \R[\mathbf x,x_{n+1}]$.

Notice that:
\begin{itemize}
    \item If $\bar {\mathbf x}^\star=(\mathbf x^\star,x_{n+1}^\star)$ is an optimal solution of POP \eqref{POP.convert.sphere.0}, $\mathbf x^\star$ is an optimal solution of POP \eqref{eq:POP.on.variety.ball}.
    \item Conversely, if $\mathbf x^\star$ is an optimal solution of POP \eqref{eq:POP.on.variety.ball}, then 
    $\bar {\mathbf x}^\star:=\left({\mathbf x}^\star,\sqrt{R-\|\mathbf x^\star\|_2^2}\right)$
    is an optimal solution of POP \eqref{POP.convert.sphere.0}.
\end{itemize}

Let us define $\bar n:=n+1$ and $\bar{\mathbf x}:=(\mathbf x,x_{n+1})$ to ease notation. 
For every $k\in\N$, consider the order $k$ moment relaxation of \eqref{POP.convert.sphere.0}:
\begin{equation}\label{eq:moment.hierarchy.multiply.diagonal.convert.0}
    \begin{array}{rl}
\bar \tau_k = \inf \limits_{\mathbf y \in {\R^{\stirlingii [0.7]{\bar n} {2k}} }} & L_{\mathbf y}(f)\\
\qquad \text{s.t. }& y_0=1\,,\,\mathbf M_k(\mathbf y) \succeq 0\,,\\
&\mathbf M_{k - 1 }(( R-\|\bar {\mathbf x}\|_2^2)\;\mathbf y)   = 0\,,\\
&\mathbf M_{k - \lceil h_j \rceil }(h_j\;\mathbf y)   = 0\,,\,j\in[l_h]\,.\\
\end{array}
\end{equation}
The corresponding dual SOS problem indexed by $k\in\N$ reads:
\begin{equation}\label{eq:sos.hierarchy.convert.dense.0}
\bar \rho_k\,:=\,\sup \,\{\,\xi\in\R\ :\ f-\xi \in P_k(\bar h)\}\,,
\end{equation}
where $P_k(\bar h)$ is the  truncated preodering of all polynomials of the form
\[\sigma_0
     +\psi_{0}(R-\| \bar\x\|^2_2)+\sum_{j=1}^{l_h}\psi_{j}h_j  \,, \] 
     with $\sigma_0\in\Sigma[\bar \x]_k$,  
      $\psi_{0}\in\R[\bar \x]_{2(k-1)}$, and 
     $\psi_j\in\R[\bar \x]_{2(k-\lceil h_j\rceil)}$, $j\in[l_h]$.
     
     The following lemma will be used later on:
\begin{lemma}\label{lem:to.show.dual.attain.0}
If $f-f^\star\in Q_k(g,h)$ for some $k\in\N$ then $f-f^\star\in P_k(\bar h)$.
\end{lemma}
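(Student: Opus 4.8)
The plan is to start from a certificate for $f - f^\star$ in $Q_k(g,h)$ and convert its single inequality term into an equality term on the enlarged variety $V(\bar h)$. Here $g = \{g_1\}$ with $g_1 = R - \|\mathbf x\|_2^2$, so $\lceil g_1 \rceil = 1$, and the hypothesis $f - f^\star \in Q_k(g,h)$ provides $\sigma_0 \in \Sigma[\mathbf x]_k$, $\sigma_1 \in \Sigma[\mathbf x]_{k-1}$ and $\psi_j \in \R[\mathbf x]_{2(k - \lceil h_j \rceil)}$, $j\in[l_h]$, with
\[ f - f^\star = \sigma_0 + \sigma_1\,(R - \|\mathbf x\|_2^2) + \sum_{j=1}^{l_h} \psi_j h_j . \]

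The key observation is the elementary identity $R - \|\mathbf x\|_2^2 = (R - \|\bar{\mathbf x}\|_2^2) + x_{n+1}^2$, where $\bar{\mathbf x} = (\mathbf x, x_{n+1})$ and $\|\bar{\mathbf x}\|_2^2 = \|\mathbf x\|_2^2 + x_{n+1}^2$. Substituting it into the certificate above gives
\[ f - f^\star = \big(\sigma_0 + \sigma_1\,x_{n+1}^2\big) + \sigma_1\,(R - \|\bar{\mathbf x}\|_2^2) + \sum_{j=1}^{l_h} \psi_j h_j , \]
which is exactly the shape of a member of $P_k(\bar h)$: $\sigma_1$ plays the role of the multiplier $\psi_0$ of the new equality constraint $R - \|\bar{\mathbf x}\|_2^2$, and the multipliers $\psi_j$ of the $h_j$ are unchanged.

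It then remains to check the degree bookkeeping. Since $\sigma_1 \in \Sigma[\mathbf x]_{k-1}$ is a sum of squares of degree at most $2(k-1)$, the product $\sigma_1\,x_{n+1}^2$ is again a sum of squares, now of degree at most $2k$, hence lies in $\Sigma[\bar{\mathbf x}]_k$; adding $\sigma_0 \in \Sigma[\mathbf x]_k \subseteq \Sigma[\bar{\mathbf x}]_k$ keeps the total in $\Sigma[\bar{\mathbf x}]_k$. Regarded as a polynomial in $\bar{\mathbf x}$, $\sigma_1$ has degree at most $2(k-1)$, so it is an admissible multiplier $\psi_0 \in \R[\bar{\mathbf x}]_{2(k-1)}$; and each $\psi_j \in \R[\mathbf x]_{2(k - \lceil h_j \rceil)} \subseteq \R[\bar{\mathbf x}]_{2(k - \lceil h_j \rceil)}$. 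Therefore $f - f^\star \in P_k(\bar h)$, as claimed.

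This is essentially bookkeeping and I do not expect a genuine obstacle; the one point to watch is that the correction term $\sigma_1\,x_{n+1}^2$ must not overshoot the degree budget of $\Sigma[\bar{\mathbf x}]_k$, and this holds precisely because $\lceil g_1 \rceil = 1$ forces $\deg\sigma_1 \le 2(k-1)$ — the same degree reserved for $\sigma_1 g_1$ in $Q_k(g,h)$ is exactly what is needed both for $\sigma_1 x_{n+1}^2$ inside the square term and for $\psi_0 = \sigma_1$ in $P_k(\bar h)$.
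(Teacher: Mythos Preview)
Your proof is correct and follows essentially the same approach as the paper: both use the identity $R-\|\mathbf x\|_2^2 = (R-\|\bar{\mathbf x}\|_2^2)+x_{n+1}^2$ to rewrite the certificate, absorbing $\sigma_1 x_{n+1}^2$ into the SOS part and using $\sigma_1$ as the multiplier of the new equality constraint. Your version is slightly more detailed on the degree bookkeeping, which the paper leaves implicit.
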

\begin{proof}
By assumption, there exist $\sigma_0\in\Sigma[\mathbf x]_k$, $\sigma_1\in\Sigma[\mathbf x]_{k-1}$, and $\psi_{j}\in\R[\mathbf x]_{2(k-\lceil h_j\rceil)}$, $j\in[l_h]$ such that
\[f-f^\star=\sigma_0+\sigma_1(R-\|\mathbf x\|_2^2)+\sum_{j=1}^{l_h}\psi_{j}h_j= \sigma_0+\sigma_1x_{n+1}^2+\sigma_1(R-\|\bar{\mathbf  x}\|_2^2)+\sum_{j=1}^{l_h}\psi_{j}h_j \,,\]
yielding the result.
\end{proof}

     The strong duality, primal attainability, and dual attainability for primal-dual \eqref{eq:moment.hierarchy.multiply.diagonal.convert.0}-\eqref{eq:sos.hierarchy.convert.dense.0} are guaranteed in the following proposition:
\begin{proposition}\label{prop:strong.duality.attainability.POP.ball}
Let $f^\star$ be as in \eqref{eq:POP.on.variety.ball} with $g=\{R-\|x\|_2^2\}$. Then:
\begin{enumerate}
    \item Strong duality holds for primal-dual \eqref{eq:moment.hierarchy.multiply.diagonal.convert.0}-\eqref{eq:sos.hierarchy.convert.dense.0}  for large enough $k\in\N$.
    \item SDP \eqref{eq:moment.hierarchy.multiply.diagonal.convert.0} has an optimal solution for large enough $k\in\N$.
    \item Assume that one of the following two conditions holds:
    \begin{enumerate}
    \item  $Q(g,h)$ is Archimedean, the ideal $\left<h\right>$
is real radical, and the second-order sufficiency condition S2 (Definition \ref{def-S2}) holds at every global minimizer of POP  \eqref{eq:POP.on.variety.ball};
\item  $V(h)$ is finite.
\end{enumerate}
 Then  SDP \eqref{eq:sos.hierarchy.convert.dense.0} has an optimal solution for large enough $k\in\N$. In this case, $\bar \tau_k=\bar \rho_k=f^\star$.
 \end{enumerate}
 \end{proposition}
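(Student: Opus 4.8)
The plan is to reduce everything to the results already established for equality constrained POPs on a sphere, namely Proposition \ref{prop:strong.duality.attainability.POP.sphere} (itself a consequence of Proposition \ref{prop:strong.duality.ball} and Theorem \ref{theo:conver.semi.hie}), applied to the lifted POP \eqref{POP.convert.sphere.0} on the variety $V(\bar h)\subset \R^{\bar n}$, where $\bar h = h\cup\{R-\|\bar{\mathbf x}\|_2^2-x_{n+1}^2\}$. The key observation is that $\bar h$ contains the element $h_1' := (R+x_{n+1}^2)-\|\bar{\mathbf x}\|_2^2$ obtained by rewriting the slack equation, but more usefully, summing the slack constraint with $g_1=R-\|\mathbf x\|_2^2$'s role: actually the natural sphere constraint here is $\|\mathbf x\|_2^2 + x_{n+1}^2 = R$, i.e. $R-\|\bar{\mathbf x}\|_2^2=0$, which is exactly the slack equation. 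So $V(\bar h)\subset \partial B_R^{\bar n}$, and \eqref{POP.convert.sphere.0} is genuinely an instance of POP \eqref{eq:POP.on.variety} in dimension $\bar n$. Moreover \eqref{eq:moment.hierarchy.multiply.diagonal.convert.0} is precisely the moment relaxation \eqref{eq:moment.hierarchy} for this instance (note $\lceil h_1'\rceil = 1$ so the localizing matrix has degree $k-1$), and \eqref{eq:sos.hierarchy.convert.dense.0} is its dual SOS program \eqref{eq:sos.hierarchy}.

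First I would establish statements 1 and 2. Since \eqref{POP.convert.sphere.0} is a POP of the form \eqref{eq:POP.on.variety} on a variety inside a sphere, and $V(\bar h)\neq\emptyset$ (because $S(g,h)\neq\emptyset$ and $R-\|\mathbf x^\star\|_2^2\ge 0$ on the feasible set, so a square root exists), Proposition \ref{prop:strong.duality.attainability.POP.sphere}.1 and .2 — equivalently, the Josz-Henrion result Proposition \ref{prop:strong.duality.ball} with $S=V(\bar h)$ — give strong duality $\bar\rho_k=\bar\tau_k\in\R$ and primal attainability of \eqref{eq:moment.hierarchy.multiply.diagonal.convert.0} for all large $k$. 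This is essentially immediate once the identification is made.

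For statement 3, the point is dual attainability of \eqref{eq:sos.hierarchy.convert.dense.0}, which by Theorem \ref{theo:conver.semi.hie}.3 and .4 holds for the lifted problem provided either $\langle \bar h\rangle$ is real radical together with S2 at every global minimizer of \eqref{POP.convert.sphere.0}, or $V(\bar h)$ is finite. So the work is to show: (a) the hypotheses on the original data ($Q(g,h)$ Archimedean, $\langle h\rangle$ real radical, S2 on \eqref{eq:POP.on.variety.ball}) imply $\langle\bar h\rangle$ is real radical and S2 holds at every global minimizer of \eqref{POP.convert.sphere.0}; and (b) $V(h)$ finite implies $V(\bar h)$ finite. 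Claim (b) is easy: $V(\bar h)$ fibers over $V(h)\cap B_R^n$ with fibers $\{x_{n+1}:x_{n+1}^2=R-\|\mathbf x\|_2^2\}$, which have at most two points, so $|V(\bar h)|\le 2|V(h)|<\infty$. Claim (a) is the main obstacle. For the real radical part one argues that $\langle\bar h\rangle = \langle h, R-\|\mathbf x\|_2^2-x_{n+1}^2\rangle$ and uses that adjoining the "square-root" variable to a real radical ideal via $x_{n+1}^2 = p(\mathbf x)$ with $p\ge 0$ on $V(h)$ preserves realness of the ideal — this needs the real Nullstellensatz and care that $p = R-\|\mathbf x\|_2^2$ is nonnegative precisely on the relevant real points. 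For S2, given a global minimizer $\mathbf x^\star$ of \eqref{eq:POP.on.variety.ball} with multiplier $\lambda^\star_1\ge 0$ for $g_1=R-\|\mathbf x\|_2^2$ and the remaining $\gamma^\star$ for $h$, one checks that $\bar{\mathbf x}^\star=(\mathbf x^\star,\sqrt{R-\|\mathbf x^\star\|_2^2})$ with suitably transported multipliers (the multiplier for the new equality being $-\lambda_1^\star$, say, up to sign/normalization) satisfies the three conditions of Definition \ref{def-S2}: constraint qualification follows because the gradient of $R-\|\bar{\mathbf x}\|_2^2-x_{n+1}^2$ in the $x_{n+1}$-direction is $-2x_{n+1}^\star$, and here one must split into the cases $x_{n+1}^\star\neq 0$ (where strict complementarity $\lambda_1^\star + g_1(\mathbf x^\star)>0$ gives $g_1(\mathbf x^\star)>0$, consistent with $x_{n+1}^\star\neq 0$) and $x_{n+1}^\star=0$ (where $\lambda_1^\star>0$ and the active-constraint gradient picks up the $x_{n+1}$ row, so still independent); and the second-order condition on the lifted Lagrangian, whose Hessian is block-diagonal with an extra entry $-2\cdot(\text{multiplier})$ in the $x_{n+1}x_{n+1}$ slot, reduces after restricting to the tangent space to the original one plus a term with the right sign. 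I expect the case analysis $x_{n+1}^\star = 0$ versus $x_{n+1}^\star\neq 0$, and correctly tracking the transported KKT multipliers so that strict complementarity is preserved, to be the delicate bookkeeping; Lemma \ref{lem:to.show.dual.attain.0} already handles the "easy direction" that SOS certificates lift, and may be invoked to shortcut part of statement 3. Finally, $\bar\tau_k=\bar\rho_k=f^\star$ then follows since $f^\star$ is shared between \eqref{eq:POP.on.variety.ball} and \eqref{POP.convert.sphere.0}.
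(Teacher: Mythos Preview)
Your treatment of statements 1 and 2 matches the paper: identify \eqref{POP.convert.sphere.0} as an instance of \eqref{eq:POP.on.variety} on the sphere $\partial B_R^{\bar n}$ and invoke Proposition \ref{prop:strong.duality.ball} (equivalently Proposition \ref{prop:strong.duality.attainability.POP.sphere}.1--2) with $V(\bar h)$ in place of $S(g,h)$.

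For statement 3 your route diverges from the paper's, and in case (a) it has a genuine gap. You attempt to verify the \emph{lifted} hypotheses of Theorem \ref{theo:conver.semi.hie}.3, i.e.\ that $\langle\bar h\rangle$ is real radical and that S2 holds at every minimizer of \eqref{POP.convert.sphere.0}. But real radicality of $\langle\bar h\rangle$ need not follow from real radicality of $\langle h\rangle$. Whenever some point $\mathbf a\in V(h)$ lies on the boundary $\|\mathbf a\|_2^2=R$, the slack variable satisfies $x_{n+1}^2=0$ there, and the ideal $\langle \bar h\rangle$ picks up $x_{n+1}^2$ rather than $x_{n+1}$ locally. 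Concretely, with $n=1$, $h=\{x_1-1\}$ (real radical), $R=1$, one has $\langle\bar h\rangle=\langle x_1-1,\,1-x_1^2-x_2^2\rangle=\langle x_1-1,\,x_2^2\rangle$, whereas $I(V_\R(\bar h))=\langle x_1-1,\,x_2\rangle$; so $\langle\bar h\rangle$ is not real radical. Your S2 transport would also require a separate argument at such boundary minimizers. Your argument for case (b) via finite fibers is correct, but is not how the paper proceeds.

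The paper instead treats both cases (a) and (b) uniformly and avoids lifting any hypotheses: it applies Proposition \ref{prop:finite.conver.ball} to the \emph{original} POP \eqref{eq:POP.on.variety.ball} on $S(g,h)$, obtaining $f-f^\star\in Q_k(g,h)$ for some $k$, and then uses Lemma \ref{lem:to.show.dual.attain.0} to push this certificate into $P_k(\bar h)$. This immediately gives an optimal solution $\xi=f^\star$ of \eqref{eq:sos.hierarchy.convert.dense.0}, hence $\bar\rho_k\ge f^\star$; combined with $\bar\rho_k\le\bar\tau_k\le f^\star$ and strong duality from statement 1, one gets $\bar\tau_k=\bar\rho_k=f^\star$. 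You mention Lemma \ref{lem:to.show.dual.attain.0} only as a possible ``shortcut for part of statement 3''; in fact it is the entire mechanism, and makes the delicate lifted-hypothesis verification unnecessary.
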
   
 
\begin{proof}
The first and second statement follow from Proposition \ref{prop:strong.duality.ball}, after replacing $S(g,h)$ by $V(\bar h)$.
The third statement is due to Proposition \ref{prop:finite.conver.ball} and Lemma \ref{lem:to.show.dual.attain.0}.
\end{proof}

For every $k\in\N$, according to Lemma \ref{lem:trace.constant.property}, if $\mathbf M_{k - 1 }((R-\|\bar {\mathbf x}\|_2^2)\;\mathbf y)   = 0$ and $y_0=1$, then one has
\begin{equation}\label{eq:convert.momentmat.ball.0}
\trace(\mathbf P_{\bar n,k}\mathbf M_k(\mathbf y)\mathbf P_{\bar n,k})=(R+1)^k\,,
\end{equation}
where $\mathbf P_{\bar n,k}$ is defined as in \eqref{eq:P.mat} after replacing $n$ by $\bar n$. 
Thus SDP \eqref{eq:moment.hierarchy.multiply.diagonal.convert.0} has the CTP. 
We now do a similar process as in Section \ref{sec:POP.sphere}. 

Next, we present an iterative method, stated in Algorithm \ref{alg:sol.nonsmooth.hier.on.unique.ball}, to solve   \eqref{eq:POP.on.variety.ball} with $g=\{R-\|x\|_2^2\}$, based on a nonsmooth optimization method such as LMBM.  

%

    \begin{algorithm}
    \footnotesize
    \caption{SpectralPOP-CTP-WithSingleBallConstraint}
    \label{alg:sol.nonsmooth.hier.on.unique.ball} 
    \textbf{Input:} POP \eqref{eq:POP.on.variety.ball} with $g=\{R-\|x\|_2^2\}$, unknown optimal value $f^\star$ and optimal solutions;\\
    \hspace*{\algorithmicindent}\hspace*{\algorithmicindent} method (D) for  solving SDP with CTP. \\
    \textbf{Output:} increasing real sequence $(\bar \tau_k)_{k\in\N}$ and $\x^\star\in\R^n$.
    \begin{algorithmic}[1]
    \For {$k\in\N$}{}
 \State Compute the optimal value $-\bar \tau_k$ and an optimal solution $\mathbf y^\star$ of SDP \eqref{eq:moment.hierarchy.multiply.diagonal.convert.0} with CTP \eqref{eq:convert.momentmat.ball.0} by using method (D);
    \State Extract an atom $\bar{\mathbf x}^\star=(\mathbf x^\star,x_{n+1}^\star)$ by using Henrion-Lasserre's algorithm in \cite{henrion2005detecting} from $\mathbf M_k(\mathbf y^\star)$;
    \State If $\bar{\mathbf x}^\star$ exists, set $\bar \tau_{k+j}=\bar \tau_k$, $j\in\N^{\ge 1}$, and terminate.
    \EndFor
    \end{algorithmic}
    \end{algorithm}

Note that one can choose method (D) in Algorithm \ref{alg:sol.nonsmooth.hier.on.unique.ball}  as Algorithm \ref{alg:sol.SDP.CTP.0} with LMBM solver or SketchyCGAL.

Following Proposition \ref{pro:flatness.ball}, Corollary \ref{coro:well-defined.sdp.by.nonsmooth} and Proposition \ref{prop:strong.duality.attainability.POP.ball}, we obtain the following corollary:
\begin{corollary}\label{coro:algorithm.well-definite.unique.ball}
(i) Sequence $(\bar \tau_k)_{k\in\N}$ of Algorithm \ref{alg:sol.nonsmooth.hier.on.unique.ball} is well defined and $\bar \tau_k\uparrow f^\star$ as $k\to\infty$.\\ 
(ii) Assume that condition (a) or (b) of Proposition \ref{prop:strong.duality.attainability.POP.ball}.3 holds.
If there exists an optimal solution $\mathbf y^\star$ of SDP \eqref{eq:moment.hierarchy.multiply.diagonal.convert.0} for some order $k\in\N$ such that the flat extension condition holds, $\x^\star$ exists at the $k$-th iteration of Algorithm \ref{alg:sol.nonsmooth.hier.on.unique.ball}. 
 In this case, Algorithm \ref{alg:sol.nonsmooth.hier.on.unique.ball} terminates at the $k$-th iteration, $\x^\star$ is an optimal  solution of POP \eqref{eq:POP.on.variety.ball} and $f^\star=\bar \tau_k$.
\end{corollary}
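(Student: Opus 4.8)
The plan is to mirror the structure of Corollary \ref{coro:algorithm.well-definite}, transferring the results established for the sphere case (Section \ref{sec:POP.sphere}) to the ball-with-single-inequality case via the reformulation \eqref{POP.convert.sphere.0}. First I would address part (i): since $\bar h$ contains $h_1' := R - \|\bar{\mathbf x}\|_2^2$, the variety $V(\bar h)$ lies on the sphere $\partial B_R^{\bar n}$, so the machinery of Section \ref{sec:pop.on.sphere} applies verbatim to POP \eqref{POP.convert.sphere.0}. By Proposition \ref{prop:strong.duality.attainability.POP.ball}.1--2, strong duality and primal attainability hold for large $k$, so conditions 1 and 2 of Assumption \ref{ass:general.assump.sdp} hold; together with the CTP \eqref{eq:convert.momentmat.ball.0} (i.e. condition 4), Corollary \ref{coro:well-defined.sdp.by.nonsmooth} guarantees that method (D) returns a well-defined optimal value $-\bar\tau_k$ at Step 2. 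Hence $(\bar\tau_k)_{k\in\N}$ is well defined, and since \eqref{eq:moment.hierarchy.multiply.diagonal.convert.0} is the order-$k$ moment relaxation of \eqref{POP.convert.sphere.0}, the convergence $\bar\tau_k \uparrow f^\star$ follows from Theorem \ref{theo:conver.semi.hie} applied to \eqref{POP.convert.sphere.0}, whose optimal value coincides with $f^\star$ by the two bullet points preceding the statement (the correspondence between minimizers of \eqref{POP.convert.sphere.0} and \eqref{eq:POP.on.variety.ball}).

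Next I would handle part (ii). Under hypothesis (a) or (b) of Proposition \ref{prop:strong.duality.attainability.POP.ball}.3, that proposition gives that SDP \eqref{eq:sos.hierarchy.convert.dense.0} (the dual SOS problem) has an optimal solution for large $k$, with $\bar\tau_k = \bar\rho_k = f^\star$; this is condition 3 of Assumption \ref{ass:general.assump.sdp}, so by the second part of Corollary \ref{coro:well-defined.sdp.by.nonsmooth} the extracted objects are well defined. The remaining work is to invoke Proposition \ref{pro:flatness.ball} (in its $V(\bar h)$ form, valid here since $\bar h$ defines a variety on a sphere): if an optimal solution $\mathbf y^\star$ of \eqref{eq:moment.hierarchy.multiply.diagonal.convert.0} at order $k$ satisfies the flat extension condition, then $\mathbf y^\star$ admits an $r$-atomic representing measure supported on $V(\bar h)$, and Henrion--Lasserre's algorithm \cite{henrion2005detecting} extracts an atom $\bar{\mathbf x}^\star = (\mathbf x^\star, x_{n+1}^\star)$ at Step 3; such a $\mathbf y^\star$ does exist because flat extension forces $\bar\tau_k = f^\star$ and the dual SOS problem to be solvable (as in the paragraph after Corollary \ref{coro:algorithm.well-definite}). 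Then $\bar{\mathbf x}^\star$ is a global minimizer of \eqref{POP.convert.sphere.0}, Step 4 terminates the loop at iteration $k$, and the first bullet point before the statement yields that $\mathbf x^\star$ is a global minimizer of \eqref{eq:POP.on.variety.ball} with $f^\star = \bar\tau_k$.

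The main obstacle, and the only non-bookkeeping point, is making precise why an optimal $\mathbf y^\star$ satisfying flat extension is guaranteed to be the one produced by method (D), and why its existence is compatible with the hypotheses: one must observe that under condition (a) or (b) the hypotheses of Proposition \ref{prop:finite.conver.ball} (items 2--3) are met for the reformulated POP \eqref{POP.convert.sphere.0}, so finite convergence holds with a flat optimal moment matrix at some order $k$, and that Lemma \ref{lem:to.show.dual.attain.0} ensures the corresponding dual SOS certificate lives in $P_k(\bar h)$ rather than merely in $Q_k(g,h)$, securing dual attainability. Everything else is a direct substitution of $(\bar n, \bar h, \mathbf P_{\bar n,k}, \bar\tau_k, \bar\rho_k, (R+1)^k)$ for the corresponding sphere-case quantities in the proof of Corollary \ref{coro:algorithm.well-definite}, so I would keep the written proof short, citing the three results listed before the statement and the correspondence of minimizers.
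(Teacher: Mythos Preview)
Your proposal is correct and follows essentially the same approach as the paper: the paper does not give an explicit proof but simply states that the corollary follows from Proposition \ref{pro:flatness.ball}, Corollary \ref{coro:well-defined.sdp.by.nonsmooth} and Proposition \ref{prop:strong.duality.attainability.POP.ball}, and your write-up is a faithful (and more detailed) unpacking of exactly that chain of implications via the sphere reformulation \eqref{POP.convert.sphere.0}.
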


%

\subsubsection{Constrained POPs on a ball}
\label{sec:POP.compact.case2}
Assume that $l_g> 1$  and $g_1=R-\|\mathbf x\|_2^2$.
Let us show that POP \eqref{eq:POP.on.variety.ball} can be reduced to an equality constrained POP on a sphere.
After adding $l_g$ slack variables $x_{n+i}$, $i\in[l_g]$, every inequality constraint $g_i(x)\ge 0$ can be rewritten as an equality constraint $g_i(x)=x_{i+n}^2$ and so
\[f^\star:=\inf\{\,f(\mathbf x)\ : \ (\mathbf x,x_{n+1},\dots,x_{n+l_g})\in V(\hat h)\}\,,\]
where $\hat h:=h\cup\{g_i-x_{i+n}^2: i\in [l_g]\}\subset \R[\mathbf x,x_{n+1},\dots,x_{n+l_g}]$.

Let us take upper bounds $b_i\ge \sup \{g_i(x): x\in S(\{g_1\},h)\}$, $i\in[l_g]$. 
For every $i\in [l_g]$, the bound $b_i$ can be computed by solving the order $k$ moment relaxation:
\begin{equation}\label{eq:bound.inequality.constrants}
    \begin{array}{rl}
-b_i = \inf \limits_{\mathbf y \in {\R^{\stirlingii [0.7]{n+1} {2k}} }} & L_{\mathbf y}(-g_i)\\
\qquad \text{s.t. }& y_0=1\,,\,\mathbf M_k(\mathbf y) \succeq 0\,,\\
&\mathbf M_{k - 1 }(( R-\|({\mathbf x},x_{n+1})\|_2^2)\;\mathbf y)   = 0\,,\\
&\mathbf M_{k - \lceil h_j \rceil }(h_j\;\mathbf y)   = 0\,,\,j\in[l_h]\,,\\
\end{array}
\end{equation}
based on the spectral minimization method presented in the previous section.

For every $(\mathbf x,x_{n+1},\dots,x_{n+l_g})\in V(\hat h)$, $\x\in S(g,h)$ and 
$x_{n+i}^2=g_i(\mathbf x)\le b_i\,,\,i\in[l_g]$,
since $S(g,h)\subset S(\{g_1\},h)$.
Therefore
\begin{equation}
\label{eq:create.ball.constraint}
    \|\mathbf x\|_2^2+\sum_{i=1}^{l_g}  x_{n+i}^2\,\leq\,\bar R \quad \text{with}\quad \bar R:=R+\sum_{i=1}^{l_g} b_i\,.
\end{equation}
Equivalently $V(\hat h)\subset B_{\bar R}^{n+l_g}$ and after adding 
one more slack variable $\x_{n+l_g+1}$:
\begin{equation}\label{POP.convert.sphere}
    f^\star:=\inf\{f(\mathbf x)\ : \ \bar {\mathbf x}\in V(\bar h)\}\,,
\end{equation}
 where $\bar {\mathbf x}:=(\mathbf x,x_{n+1},\dots,x_{n+l_g+1})$ and 
\[\bar h:=\hat h \cup \{\bar R-\|\bar {\mathbf x}\|_2^2\}\,=\,h\cup\{g_i-x_{i+n}^2: i\in [l_g]\}\cup \{\bar R-\|\bar {\mathbf x}\|_2^2\}\,\subset \R[\bar{\mathbf x}]\,.\]
Notice that:
\begin{itemize}
    \item If $\bar {\mathbf x}^\star=(\mathbf x^\star,x_{n+1}^\star,\dots,x_{n+l_g+1}^\star)$ is an optimal solution of POP \eqref{POP.convert.sphere}, $\mathbf x^\star$ is an optimal solution of POP \eqref{eq:POP.on.variety.ball}.
    \item Conversely, if $\mathbf x^\star$ is an optimal solution of POP \eqref{eq:POP.on.variety.ball}, then 
    \[\bar {\mathbf x}^\star:=\left({\mathbf x}^\star,\sqrt{g_1({\mathbf x}^\star)},\dots,\sqrt{g_{l_g}({\mathbf x}^\star)},\sqrt{\bar R-\sum_{i=1}^{l_g}g_i({\mathbf x}^\star)-\|\mathbf x^\star\|_2^2}\right)\]
    is an optimal solution of POP \eqref{POP.convert.sphere}.
\end{itemize}

Note $\bar n:=n+l_g+1$ for simplicity. 
For every $k\in\N$, consider the order $k$ moment relaxation of \eqref{POP.convert.sphere}:
\begin{equation}\label{eq:moment.hierarchy.multiply.diagonal.convert}
    \begin{array}{rl}
\bar \tau_k = \inf \limits_{\mathbf y \in {\R^{\stirlingii [0.7]{\bar n} {2k}} }} & L_{\mathbf y}(f)\\
\qquad \text{s.t. }& y_0=1\,,\,\mathbf M_k(\mathbf y) \succeq 0\,,\\
&\mathbf M_{k - \lceil g_i \rceil }((g_i-x_{n+i}^2)\;\mathbf y)   = 0\,,\,i\in[l_g]\,,\\
&\mathbf M_{k - 1 }((\bar R-\|\bar {\mathbf x}\|_2^2)\;\mathbf y)   = 0\,,\\
&\mathbf M_{k - \lceil h_j \rceil }(h_j\;\mathbf y)   = 0\,,\,j\in[l_h]\,.\\
\end{array}
\end{equation}
The corresponding dual SOS problem indexed by $k\in\N$ reads:
\begin{equation}\label{eq:sos.hierarchy.convert.dense}
\bar \rho_k\,:=\,\sup \,\{\,\xi\in\R\ :\ f-\xi \in P_k(\bar h)\}\,,
\end{equation}
where $P_k(\bar h)$ is the  truncated preodering of all polynomials of the form
\[\sigma_0+\sum_{i=1}^{l_g}\psi_i(g_i-x_{n+i}^2)
     +\psi_{l_g+1}(\bar R-\| \bar\x\|^2_2)+\sum_{j=1}^{l_h}\psi_{l_g+1+j}h_j\]
     with $\sigma_0\in\Sigma[\bar \x]_k$, 
     $\psi_i\in\R[\bar \x]_{2(k-\lceil g_i\rceil)}\,,\,i\in[l_g]$, 
      $\psi_{l_g+1}\in\R[\bar \x]_{2(k-1)}$, and 
     $\psi_{l_g+1+j}\in\R[\bar \x]_{2(k-\lceil h_j\rceil)}$, $j\in[l_h]$.

We will use  the following lemma later on:
\begin{lemma}\label{lem:to.show.dual.attain}
If $f-f^\star\in Q_k(g,h)$ for some $k\in\N$ then $f-f^\star\in P_k(\bar h)$.
\end{lemma}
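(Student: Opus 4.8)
The plan is to reproduce the argument of Lemma~\ref{lem:to.show.dual.attain.0}, now with $l_g$ inequality constraints turned into the equalities $g_i-x_{n+i}^2=0$, $i\in[l_g]$, via the slack variables. I would start from a representation witnessing $f-f^\star\in Q_k(g,h)$,
\[f-f^\star=\sigma_0+\sum_{i=1}^{l_g}\sigma_ig_i+\sum_{j=1}^{l_h}\psi_jh_j\,,\]
with $\sigma_0\in\Sigma[\x]_k$, $\sigma_i\in\Sigma[\x]_{k-\lceil g_i\rceil}$ and $\psi_j\in\R[\x]_{2(k-\lceil h_j\rceil)}$. First I would substitute $g_i=(g_i-x_{n+i}^2)+x_{n+i}^2$ in each product and regroup, obtaining
\[f-f^\star=\Bigl(\sigma_0+\sum_{i=1}^{l_g}\sigma_ix_{n+i}^2\Bigr)+\sum_{i=1}^{l_g}\sigma_i(g_i-x_{n+i}^2)+0\cdot(\bar R-\|\bar\x\|_2^2)+\sum_{j=1}^{l_h}\psi_jh_j\,.\]
It then remains to check that this expression fits the pattern defining $P_k(\bar h)$.

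The only point needing care is the bookkeeping of degrees. Since $g_1=R-\|\x\|_2^2$ and, more generally, each $g_i$ is nonconstant, one has $\lceil g_i\rceil\ge1$, so $\sigma_ix_{n+i}^2$ is a sum of squares of degree at most $2(k-\lceil g_i\rceil)+2\le 2k$ and hence lies in $\Sigma[\bar\x]_k$; adding $\sigma_0\in\Sigma[\x]_k\subseteq\Sigma[\bar\x]_k$, the bracketed term belongs to $\Sigma[\bar\x]_k$ and plays the role of the ``$\sigma_0$'' in $P_k(\bar h)$. Taking $\psi_i:=\sigma_i\in\R[\bar\x]_{2(k-\lceil g_i\rceil)}$ for $i\in[l_g]$, $\psi_{l_g+1}:=0\in\R[\bar\x]_{2(k-1)}$, and $\psi_{l_g+1+j}:=\psi_j\in\R[\bar\x]_{2(k-\lceil h_j\rceil)}$ for $j\in[l_h]$ (viewing polynomials in $\x$ as polynomials in $\bar\x$), the displayed identity exhibits $f-f^\star$ as a member of $P_k(\bar h)$, which is the claim.

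I do not expect a genuine obstacle here: the proof reduces to the single algebraic identity above together with routine degree checks, exactly paralleling the case $l_g=1$ already treated in Lemma~\ref{lem:to.show.dual.attain.0}. The one subtlety to keep in mind is that each slack contribution $\sigma_ix_{n+i}^2$ must be absorbed into the global sum-of-squares multiplier rather than left as a separate term, and that this absorption is legitimate precisely because $\lceil g_i\rceil\ge1$ prevents its degree from exceeding $2k$.
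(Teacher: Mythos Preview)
Your proposal is correct and follows essentially the same argument as the paper: start from a $Q_k(g,h)$-representation, rewrite each $\sigma_i g_i$ as $\sigma_i x_{n+i}^2+\sigma_i(g_i-x_{n+i}^2)$, insert a zero multiplier for the sphere constraint, and read off membership in $P_k(\bar h)$. Your version is in fact slightly more detailed than the paper's, which states the identity and immediately concludes, whereas you explicitly verify the degree bounds (in particular the point that $\lceil g_i\rceil\ge1$ keeps $\sigma_i x_{n+i}^2\in\Sigma[\bar\x]_k$).
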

\begin{proof}
By assumption, there exist $\sigma_0\in\Sigma[x]_k$, $\sigma_i\in\Sigma[x]_{k-\lceil g_i\rceil}$, $i\in[l_g]$, and $\psi_{j}\in\R[x]_{2(k-\lceil h_j\rceil)}$, $j\in[l_h]$ such that
\[f-f^\star=\sigma_0+\sum_{i=1}^{l_g}\sigma_ig_i+\sum_{j=1}^{l_h}\psi_{j}h_j\,.\]
It implies that 
\[f-f^\star= \sigma_0+\sum_{i=1}^{l_g}\sigma_ix_{i+n}^2+\sum_{i=1}^{l_g}\sigma_i(g_i-x_{i+n}^2)+0\times (\bar R-\|\bar {\mathbf x}\|_2^2)+\sum_{j=1}^{l_h}\psi_{j}h_j \,,\]
yielding the result.
\end{proof}

     The strong duality, primal attainability, and dual attainability for primal-dual \eqref{eq:moment.hierarchy.multiply.diagonal.convert}-\eqref{eq:sos.hierarchy.convert.dense} are guaranteed in the following proposition:
\begin{proposition}\label{prop:strong.duality.attainability.POP.ball.general}
Let $f^\star$ be as in \eqref{eq:POP.on.variety.ball}. Then:
\begin{enumerate}
    \item Strong duality holds for primal-dual \eqref{eq:moment.hierarchy.multiply.diagonal.convert}-\eqref{eq:sos.hierarchy.convert.dense}  for large enough $k\in\N$.
    \item SDP \eqref{eq:moment.hierarchy.multiply.diagonal.convert} has an optimal solution for large enough $k\in\N$.
    \item Assume one of the following two conditions holds:
    \begin{enumerate}
    \item  $Q(g,h)$ is Archimedean, the ideal $\left<h\right>$
is real radical, and the second-order sufficiency condition S2 (Definition \ref{def-S2}) holds at every global minimizer of POP  \eqref{eq:POP.on.variety.ball};
\item  $V(h)$ is finite.
\end{enumerate}
 Then  SDP \eqref{eq:sos.hierarchy.convert.dense} has an optimal solution for large enough $k\in\N$. In this case, $\bar \tau_k=\bar \rho_k=f^\star$.
 \end{enumerate}
 \end{proposition}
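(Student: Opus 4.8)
The plan is to run the same argument as in the proof of Proposition~\ref{prop:strong.duality.attainability.POP.ball}, with Lemma~\ref{lem:to.show.dual.attain} playing the role of Lemma~\ref{lem:to.show.dual.attain.0}. First I would record that POP~\eqref{POP.convert.sphere} is an instance of the general problem~\eqref{eq:POP.on.variety.ball} with the purely equality-constrained feasible set $V(\bar h)=S(\emptyset,\bar h)$: since $\bar R-\|\bar{\mathbf x}\|_2^2$ is one of the generators of $\bar h$, one has $V(\bar h)\subseteq\partial B_{\bar R}^{\bar n}$, hence $V(\bar h)$ is compact; and $V(\bar h)\neq\emptyset$ because the explicit lift $\bar{\mathbf x}^\star$ displayed right after~\eqref{POP.convert.sphere} of any global minimizer $\mathbf x^\star$ of~\eqref{eq:POP.on.variety.ball} belongs to $V(\bar h)$, the last coordinate being well defined thanks to~\eqref{eq:create.ball.constraint}. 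Then I would check that~\eqref{eq:moment.hierarchy.multiply.diagonal.convert} and~\eqref{eq:sos.hierarchy.convert.dense} are precisely the moment relaxation~\eqref{eq:moment.hierarchy.ball} and its SOS dual~\eqref{eq:sos.hierarchy.ball} attached to POP~\eqref{POP.convert.sphere} (no inequality constraints, equality constraints $\bar h$), using that $\lceil g_i-x_{n+i}^2\rceil=\lceil g_i\rceil$ once trivial constant constraints are discarded and that $g_1=R-\|\mathbf x\|_2^2$ has degree $2$.

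Granting this identification, statements~1 and~2 would follow immediately from Josz--Henrion's Proposition~\ref{prop:strong.duality.ball} applied to POP~\eqref{POP.convert.sphere}: for $k$ large enough one has $\bar\rho_k=\bar\tau_k\in\R$ (strong duality) and SDP~\eqref{eq:moment.hierarchy.multiply.diagonal.convert} admits an optimal solution.

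For statement~3, I would first invoke Proposition~\ref{prop:finite.conver.ball} on the \emph{original} POP~\eqref{eq:POP.on.variety.ball}: hypothesis~(a) is exactly the hypothesis of Proposition~\ref{prop:finite.conver.ball}.2 (note that $g_1=R-\|\mathbf x\|_2^2\in Q(g,h)$ already forces the Archimedean condition), while hypothesis~(b) is that of Proposition~\ref{prop:finite.conver.ball}.3; in either case there is $k_0\in\N$ with $\tau_{k_0}=\rho_{k_0}=f^\star$ and with~\eqref{eq:sos.hierarchy.ball} attained at order $k_0$, so in particular $f-f^\star\in Q_{k_0}(g,h)$. By Lemma~\ref{lem:to.show.dual.attain} this yields $f-f^\star\in P_{k_0}(\bar h)\subseteq P_k(\bar h)$ for every $k\ge k_0$, hence $\xi=f^\star$ is feasible for~\eqref{eq:sos.hierarchy.convert.dense} at every such order; since $\bar\rho_k\le f^\star$ always holds (any feasible $\xi$ for~\eqref{eq:sos.hierarchy.convert.dense} satisfies $f-\xi\ge 0$ on $V(\bar h)$, whose minimum of $f$ equals $f^\star$), we get $\bar\rho_k=f^\star$ with~\eqref{eq:sos.hierarchy.convert.dense} attained for all $k\ge k_0$. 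Combining with the strong duality of statement~1 gives $\bar\tau_k=\bar\rho_k=f^\star$ for all sufficiently large $k$, which is the assertion.

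The whole argument is essentially bookkeeping on top of already-cited results, and I do not expect a genuine obstacle. The one step that needs real care — and the one I would flag as the potential stumbling block — is the identification step: verifying that~\eqref{eq:moment.hierarchy.multiply.diagonal.convert}--\eqref{eq:sos.hierarchy.convert.dense} really are the standard moment--SOS relaxations of~\eqref{POP.convert.sphere} (matching truncation degrees, in particular $\lceil g_i-x_{n+i}^2\rceil=\lceil g_i\rceil$), together with the nonemptiness and compactness of $V(\bar h)$ that are needed to legitimately invoke Propositions~\ref{prop:strong.duality.ball} and~\ref{prop:finite.conver.ball}.
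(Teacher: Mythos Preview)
Your proposal is correct and follows essentially the same route as the paper's proof: statements~1 and~2 by applying Proposition~\ref{prop:strong.duality.ball} to the equality-constrained POP~\eqref{POP.convert.sphere} on $V(\bar h)$, and statement~3 by combining Proposition~\ref{prop:finite.conver.ball} (applied to the original POP to obtain $f-f^\star\in Q_{k_0}(g,h)$) with Lemma~\ref{lem:to.show.dual.attain}. The paper's proof is terser and leaves the identification step and the chain $Q_{k_0}(g,h)\to P_{k_0}(\bar h)\to\bar\rho_k=f^\star$ implicit, whereas you spell these out; the care you flag about matching truncation degrees is warranted but, as you note, resolves once constant $g_i$ are excluded.
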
   
 
\begin{proof}
The first and second statement follow from to Proposition \ref{prop:strong.duality.ball} after replacing $S(g,h)$ by $V(\bar h)$.
The third statement is due to Proposition \ref{prop:finite.conver.ball} and Lemma \ref{lem:to.show.dual.attain}.
\end{proof}

For every $k\in\N$, according to Lemma \ref{lem:trace.constant.property}, if $\mathbf M_{k - 1 }((\bar R-\|\bar {\mathbf x}\|_2^2)\;\mathbf y)   = 0$ and $y_0=1$,
\begin{equation}\label{eq:convert.momentmat.ball}
\trace(\mathbf P_{\bar n,k}\mathbf M_k(\mathbf y)\mathbf P_{\bar n,k})=(\bar R+1)^k\,,
\end{equation}
where $\mathbf P_{\bar n,k}$ is defined as in \eqref{eq:P.mat} with $n$ replaced by $\bar n$. 
Thus SDP \eqref{eq:moment.hierarchy.multiply.diagonal.convert} has the CTP. 
It remains to follow a process which is similar to the one from Section \ref{sec:POP.sphere}. 

Next, we present an iterative method, stated in Algorithm \ref{alg:sol.nonsmooth.hier.on.general.ball}, to solve POP  \eqref{eq:POP.on.variety.ball} with $g=R-\|x\|_2^2$, based on nonsmooth optimization methods such as LMBM.  

%

    \begin{algorithm}
    \footnotesize
    \caption{SpectralPOP-CTP-WithBallConstraint}
    \label{alg:sol.nonsmooth.hier.on.general.ball} 
    \textbf{Input:} POP \eqref{eq:POP.on.variety.ball} with $g_1=R-\|x\|_2^2$, unknown optimal value $f^\star$ and optimal solutions;\\
    \hspace*{\algorithmicindent}\hspace*{\algorithmicindent} method (D) for  solving SDP with CTP. \\
    \textbf{Output:} increasing real sequence $(\bar \tau_k)_{k\in\N}$ and $\x^\star\in\R^n$.
    \begin{algorithmic}[1]
    \For {$k\in\N$}{}
    \State Compute the optimal value $b_i$ of SDP \eqref{eq:bound.inequality.constrants} with CTP, $i\in[l_g]$,  by using method (D) and set $\bar R:=R+\sum_{i=1}^{l_g} b_i$;
 \State Compute the optimal value $-\bar \tau_k$ and an optimal solution $\mathbf y^\star$ of SDP \eqref{eq:moment.hierarchy.multiply.diagonal.convert} with CTP \eqref{eq:convert.momentmat.ball} by using method (D);
    \State Extract an atom $\bar {\mathbf x}^\star=(\mathbf x^\star,x_{n+1}^\star,\dots,x_{n+l_g+1}^\star)$ by using Henrion-Lasserre's algorithm in \cite{henrion2005detecting} from $\mathbf M_k(\mathbf y^\star)$;
    \State If $\bar{\mathbf x}^\star$ exists, set $\bar \tau_{k+j}=\bar \tau_k$, $j\in\N^{\ge 1}$, and terminate.
    \EndFor
    \end{algorithmic}
    \end{algorithm}

As in the single (ball) constraint case, one can choose method (D) in Algorithm \ref{alg:sol.nonsmooth.hier.on.general.ball}  as Algorithm \ref{alg:sol.SDP.CTP.0} with LMBM solver or SketchyCGAL.

Following Proposition \ref{pro:flatness.ball}, Corollary \ref{coro:well-defined.sdp.by.nonsmooth} and Proposition \ref{prop:strong.duality.attainability.POP.ball.general}, we obtain the following corollary:
\begin{corollary}\label{coro:algorithm.well-definite.general.ball}
(i) The sequence $(\bar \tau_k)_{k\in\N}$ of Algorithm \ref{alg:sol.nonsmooth.hier.on.general.ball} is well defined and $\bar \tau_k\uparrow f^\star$ as $k\to\infty$.\\ 
(ii) Assume that either condition (a) or condition (b) of Proposition \ref{prop:strong.duality.attainability.POP.ball.general}.3 holds.
If there exists an optimal solution $\mathbf y^\star$ of SDP \eqref{eq:moment.hierarchy.multiply.diagonal.convert} at order $k\in\N$ such that the flat extension condition holds, then $\x^\star$ exists at the $k$-th iteration of Algorithm \ref{alg:sol.nonsmooth.hier.on.general.ball} . 
 In this case, Algorithm \ref{alg:sol.nonsmooth.hier.on.general.ball}  terminates at the $k$-th iteration, $\x^\star$ is an optimal  solution of POP \eqref{eq:POP.on.variety.ball} and $f^\star=\bar \tau_k$.
\end{corollary}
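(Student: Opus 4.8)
The plan is to reproduce, mutatis mutandis, the arguments behind Corollaries \ref{coro:algorithm.well-definite} and \ref{coro:algorithm.well-definite.unique.ball}, using now the reduction of POP \eqref{eq:POP.on.variety.ball} to the equality-constrained POP \eqref{POP.convert.sphere} on the sphere $\partial B_{\bar R}^{\bar n}$ together with the constant-trace identity \eqref{eq:convert.momentmat.ball}. Throughout I would rely on the two facts recorded just before \eqref{eq:moment.hierarchy.multiply.diagonal.convert}: POP \eqref{POP.convert.sphere} has the \emph{same} optimal value $f^\star$ as \eqref{eq:POP.on.variety.ball} (via the lifting/projection correspondence between their minimizers), and $V(\bar h)\neq\emptyset$ whenever $S(g,h)\neq\emptyset$, so that \eqref{POP.convert.sphere} is of the form \eqref{eq:POP.on.variety} with nonempty feasible set.

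\textbf{Part (i).} Step~2 of Algorithm \ref{alg:sol.nonsmooth.hier.on.general.ball} is sound because \eqref{eq:bound.inequality.constrants} is itself an equality-constrained moment relaxation on a sphere (in $n+1$ variables), hence has the CTP and is solvable by method (D) as in Section \ref{sec:POP.sphere}; moreover its value $-b_i$ under-estimates the corresponding infimum, whence $b_i\ge\sup\{g_i(x):x\in S(\{g_1\},h)\}$, so $\bar R$ is well defined and $V(\bar h)\subset B_{\bar R}^{n+l_g}$ as in \eqref{eq:create.ball.constraint}. For Step~3, SDP \eqref{eq:moment.hierarchy.multiply.diagonal.convert} is the $\mathbf P_{\bar n,k}$-conjugate of the moment relaxation of \eqref{POP.convert.sphere}, hence satisfies the CTP \eqref{eq:convert.momentmat.ball} (condition~4 of Assumption \ref{ass:general.assump.sdp}); since the CTP makes its feasible set compact and that set is nonempty (it contains the truncated moment sequence of $\delta_{\bar{\mathbf x}^\star}$ for any minimizer $\bar{\mathbf x}^\star$ of \eqref{POP.convert.sphere}), condition~2 holds, and by the implication that conditions~2 and~4 imply condition~1 (Trnovska's theorem, noted after Assumption \ref{ass:general.assump.sdp}) strong duality holds too --- this also follows from the first two items of Proposition \ref{prop:strong.duality.attainability.POP.ball.general}. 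Corollary \ref{coro:well-defined.sdp.by.nonsmooth}, applied with the globally convergent method (D) (e.g. Algorithm \ref{alg:sol.SDP.CTP.0} with LMBM), then shows $-\bar\tau_k$ is well defined. Finally, feasible moment sequences at order $k+1$ restrict to feasible ones at order $k$ with the same objective value, so $\bar\tau_k\le\bar\tau_{k+1}$; combining $\bar\rho_k\le\bar\tau_k\le f^\star$ (weak duality and the point $\bar{\mathbf x}^\star$) with the convergence $\bar\rho_k\uparrow f^\star$ supplied by the first two items of Theorem \ref{theo:conver.semi.hie} applied to \eqref{POP.convert.sphere}, we get $\bar\tau_k\uparrow f^\star$.

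\textbf{Part (ii).} Assume condition (a) or (b) of Proposition \ref{prop:strong.duality.attainability.POP.ball.general}.3. Then for large $k$ the dual problem \eqref{eq:sos.hierarchy.convert.dense} is attained and $\bar\tau_k=\bar\rho_k=f^\star$, so condition~3 of Assumption \ref{ass:general.assump.sdp} holds on top of conditions~1 and~4; Corollary \ref{coro:well-defined.sdp.by.nonsmooth} then guarantees that Step~3 returns a well-defined optimal $\mathbf X^\star$, whence $\mathbf M_k(\mathbf y^\star)=\mathbf P_{\bar n,k}^{-1}\mathbf X^\star\mathbf P_{\bar n,k}^{-1}$ is an optimal moment matrix of \eqref{eq:moment.hierarchy.multiply.diagonal.convert}. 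If the flat extension condition holds at order $k$, Proposition \ref{pro:flatness.ball} (applied to \eqref{POP.convert.sphere}, for which $V(\bar h)=S(\emptyset,\bar h)$) shows that $\mathbf y^\star$ carries an atomic representing measure whose atoms lie in $V(\bar h)$ and are global minimizers of \eqref{POP.convert.sphere}, and that $\bar\tau_k=f^\star$; the Henrion--Lasserre procedure \cite{henrion2005detecting} at Step~4 then returns such an atom $\bar{\mathbf x}^\star=(\mathbf x^\star,x_{n+1}^\star,\dots,x_{n+l_g+1}^\star)$, so the algorithm terminates at iteration $k$ (Step~5). By the projection correspondence the truncation $\mathbf x^\star$ is an optimal solution of \eqref{eq:POP.on.variety.ball}, and $f^\star=\bar\tau_k$.

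\textbf{Main obstacle.} The delicate point is reconciling the flat-extension hypothesis --- stated for \emph{some} optimal $\mathbf y^\star$ --- with the particular optimal solution produced by method (D) at Step~3. I would dispatch it exactly as in the remark following Corollary \ref{coro:algorithm.well-definite}: the existence of a flat optimal moment solution at order $k$ forces, via \cite[Theorem~3.4(b)]{lasserre2001global} together with $\bar\tau_k=\bar\rho_k$, the dual problem \eqref{eq:sos.hierarchy.convert.dense} to be attained, hence condition~3 to hold, so that the optimal $\mathbf X^\star$ built from the maximal-eigenvalue eigenvectors of $\mathbf C_k-\mathcal A_k^T\bar{\mathbf z}$ in Step~3 of Algorithm \ref{alg:sol.SDP.CTP.0} exists and yields a moment matrix to which extraction applies; under (a) the minimizer set is finite, and under (b) $V(\bar h)$ is finite, so high-order optimal moment matrices stabilize in rank and are flat. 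All remaining steps are only the routine dictionary between \eqref{eq:moment.hierarchy.multiply.diagonal.convert}, \eqref{eq:sos.hierarchy.convert.dense}, and the abstract primal-dual pair \eqref{eq:SDP.form.0}-\eqref{eq:SDP.form.dual.0}.
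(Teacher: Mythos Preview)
Your proposal is correct and follows essentially the same route as the paper, which merely states that the corollary follows from Proposition~\ref{pro:flatness.ball}, Corollary~\ref{coro:well-defined.sdp.by.nonsmooth}, and Proposition~\ref{prop:strong.duality.attainability.POP.ball.general}; you have simply unpacked how these three ingredients combine, including the reconciliation of the flat-extension hypothesis with the particular solution returned by method~(D), which the paper handles identically in the remark following Corollary~\ref{coro:algorithm.well-definite}.
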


\subsection{Systems of polynomial equations}
\label{subsec:sys.po.eq}
We suggest to use the adding spherical constraints (ASC)   method in \cite[Algorithm 4.3]{mai2019sums} to compute at least one real root of a system of polynomial equations. 
Let $V$ be a variety  contained in the unit sphere.
Let $\mathbf a_0=\mathbf 0$ and $(\mathbf a_1,\dots,\mathbf a_n)$ be the  canonical basis of $\R^n$.
In Algorithm \ref{alg:asc}, we recall the ASC algorithm to compute at least one feasible point of $V$:

\begin{algorithm}
\footnotesize
\caption{SpectralASC}
\label{alg:asc}
\textbf{Input:} variety $V$ contained in the unit sphere, relaxation order $k\in\N$.\\
\textbf{Output:} $\x^\star\in V$.
\begin{algorithmic}[1]
\For{$t\in[n]$}{}
\State Compute the optimal value $\omega_t$ and possible $\x^\star$ of POP $\min\{\|\mathbf x-\mathbf a_t\|_2^2\,:\,\mathbf x\in V\}$ by running the $k$-th iteration of Algorithm \ref{alg:sol.nonsmooth.hier};
\State If $\x^\star$ exists, terminate;
\State If $t\le n-1$, set $V=V\cap \{\mathbf x\in\R^n\,:\,\omega_t=\|\mathbf x-\mathbf a_t\|_2^2\}$;
\EndFor
\State Set $\x^\star=\mathbf a-\frac{1}2\omega$, with $\mathbf a = (1,\dots, 1 )\in\R^n$ and $\omega=(\omega_1,\dots,\omega_n)$.
\end{algorithmic}
\end{algorithm}

Consider a system of polynomial equations in the form 
\begin{equation}\label{eq:poly.sys.form}
    \mathbf x\in\R^n\quad\text{ and }\quad p_1(\mathbf x)=\dots=p_r(\mathbf x)=0\,,
\end{equation}
where $p_j\in\R[\mathbf x]$, $j\in [n]$. 
Assume that there exists a real root of \eqref{eq:poly.sys.form} belonging to $B^n_L=\{\mathbf x\in\R^n: L-\Vert \mathbf x\Vert_2^2\geq 0\}$ for some $L>0$.
By adding one variable $x_{n+1}$ and noting $p_{r+1}=L-\|\mathbf x\|_2^2-x_{n+1}^2$,  \eqref{eq:poly.sys.form} is equivalent to the system $p_1(\bar {\mathbf x})=\dots=p_{r+1}(\bar{\mathbf  x})=0$, where $\bar {\mathbf x}=(\mathbf x,x_{n+1})$. 
Set $\hat p_j=L^{-1}p_j(L^{1/2}\bar {\mathbf x})$, $j\in [r+1]$. 
Then \eqref{eq:poly.sys.form} is equivalent to the system $\hat p_1(\bar {\mathbf x})=\dots=\hat p_{r+1}(\bar {\mathbf x})=0$ with $\hat p_{r+1}=1-\|\bar {\mathbf x}\|_2^2$. 
We can now apply Algorithm \ref{alg:asc} to compute a real root of \eqref{eq:poly.sys.form} by finding a feasible point of the variety 
\begin{equation}
    \hat V=\{\bar {\mathbf x}\in\R^{n+1}\,:\, \hat p_1(\bar {\mathbf x})=\dots=\hat p_{r+1}(\bar {\mathbf x})=0\}\,.
\end{equation}
Note that if $\bar {\mathbf x}^\star=(\mathbf x^\star,x_{n+1}^\star) \in \hat V$, then $ {\mathbf x}^\star$ is a real root of \eqref{eq:poly.sys.form}.
Conversely, if $ {\mathbf x}^\star$ is a real root of \eqref{eq:poly.sys.form}, then $(\mathbf x^\star,\pm \sqrt{L-\|\mathbf x^\star\|_2^2}) \in \hat V$.
It implies that the number of real roots of \eqref{eq:poly.sys.form} belonging to $B^n_L$ is $|\hat V|/2$. 
Hence if the set of real roots of \eqref{eq:poly.sys.form} belonging to $B^n_L$ is finite, the variety $\hat V$ is finite.

\section{Numerical experiments}
\label{sec:benchmark}
Let us report numerical results obtained while relying on algorithms from Section \ref{sec:applica.SDP-CTP} to solve equality constrained QCQPs on a sphere, quartic minimization problems on the unit sphere and squared systems of polynomial equations. 

The experiments are performed in Julia 1.3.1 with the following packages:
\begin{itemize}
    \item SumOfSquare.jl \cite{weisser2019polynomial} is a modeling library to write and solve SDP relaxations of POPs, based on JuMP.jl and the SDP solver Mosek 9.1.
    \item LMBM.jl solves  unconstrained NSOPs with the limited-memory bundle method of Haarala et al. \cite{haarala2007globally,haarala2004new}.
    LMBM.jl calls Karmitsa's Fortran implementation of LMBM algorithm  \cite{karmitsa2007lmbm}.
    \item SketchyCGAL is a MATLAB package to handle SDP problems with CTP/BTP, implemented by Yurtsever et al.
   \cite{yurtsever2019scalable}. 
   We have implemented a Julia version (SketchyCGAL.jl) of SketchyCGAL to ensure fair comparison with LMBM.jl and SumOfSquare.jl.
      In this section,  SketchyCGAL is used as a solver for SDP \eqref{eq:SDP.form} in Algorithm \ref{alg:sol.nonsmooth.hier}  instead of Algorithm \ref{alg:sol.SDP.CTP.0} or \ref{alg:sol.SDP.CTP.0.btp}.
\end{itemize}
We also use the package Arpack.jl,  which is based on the implicitly restarted Lanczos's algorithm, to compute the largest eigenvalues and the corresponding eigenvectors of  real symmetric matrices of (potentially) large size. 

When POPs have equality constraints, SumOfSquare.jl uses reduced forms with Groebner basis instead of creating SOS multipliers, in order to reduce solving time.

The implementation of algorithms described in Section \ref{sec:applica.SDP-CTP} can be downloaded from the link: \href{https://github.com/maihoanganh/SpectralPOP}{https://github.com/maihoanganh/SpectralPOP}.

We use a desktop computer with an Intel(R) Core(TM) i7-8665U CPU @ 1.9GHz $\times$ 8 and 31.2 GB of RAM. 
The notation for our numerical results are given in Table \ref{tab:nontation}.
\begin{table}
    \caption{\small Notation}
    \label{tab:nontation}
\footnotesize
\begin{center}
\begin{tabular}{|m{2.1cm}|m{8cm}|}
\hline
$n$&the number of variables of the POP\\
\hline
$l_g$&the number of inequality constraints of the  POP\\
\hline
$l_h$&the number of equality constraints of the POP\\
\hline
$k$& the order of the moment-SOS relaxation or the  iteration of Algorithm \ref{alg:sol.nonsmooth.hier}\\
\hline
$s$&the  size of the positive semidefinite matrix involved in the SDP relaxation \\
\hline
$m$&the number of trace equality constraints of the SDP relaxation\\
\hline
SumOfSquares & SDP relaxation modeled by SumOfSquares.jl and solved by Mosek 9.1\\
\hline
CTP & the method described either in Section \ref{sec:POP.sphere}, Section \ref{sec:POP.compact.case1} or Section \ref{sec:POP.compact.case2} \\
\hline
BTP & the method described in Remark \ref{re:BTP.for.POP}\\
\hline
LMBM & SDP relaxation solved by spectral minimization, described in Section \ref{sec:sdp.ctp.btp} with the LMBM solver\\
\hline 
SketchyCGAL & SDP relaxation solved by SketchyCGAL\\
\hline
SpectralPOP & SDP relaxation handled by CTP or BTP method, with LMBM or SketchyCGAL solver\\
\hline
val& the optimal value of the SDP relaxation\\
\hline
gap& the relative optimality gap w.r.t. SumOfSquares, defined by
\[\text{gap}=\frac{|\text{val}-\text{val(SumOfSquares)}|}{|\text{val(SumOfSquares)}|}\]\\
\hline
$^*$& there exists at least one optimal solution of the POP, which can extracted by Henrion-Lasserre's algorithm in \cite{henrion2005detecting}\\
\hline
time & the total computation time of the SDP relaxation in seconds\\
\hline
$-$& the calculation did not finish in 3000 seconds or ran out of memory\\
\hline
\end{tabular}    
\end{center}
\end{table}
\subsection{Polynomial optimization}

\subsubsection{Random dense equality constrained QCQPs on the unit sphere}
\label{sec:experiment.random.QCQP.sphere}
\paragraph{Test problems:}
We construct several instances of POP \eqref{eq:POP.on.variety} as follows:
\begin{enumerate}
    \item Take $h_1=1-\|x\|_2^2$ and choose $f$, $h_j$, $j\in [l_h]\backslash\{1\}$ with degrees at most $2$;
    \item Each coefficient of the objective function $f$ is taken randomly in $(-1,1)$ with respect to the uniform distribution;
    \item Select a random point $\mathbf a\in \R^n$ in the unit sphere;
    \item For every $j\in [l_h]\backslash\{1\}$, all non-constant coefficients of $h_j$ are taken randomly in $(-1,1)$ with respect to the uniform distribution,  and the constant coefficient of $h_j$ is chosen such that $h_j(\mathbf a)=0$.
\end{enumerate}
By construction, $\mathbf a$ is a feasible solution. 
We use the method presented in Section \ref{sec:POP.sphere} (actually the $k$-th iteration of Algorithm \ref{alg:sol.nonsmooth.hier}) to solve these problems.
Numerical results are displayed in Table \ref{tab:exam1.d.equal.2} for the case $l_h=1$ and Table \ref{tab:random.qcqp.first.relax}, \ref{tab:random.qcqp} for the case $l_h=\lceil n/4\rceil$. 
For these results, we use the Julia version of SketchyCGAL, which runs much faster than the MATLAB version without compromising accuracy.

\begin{table}
    \caption{\small Numerical results for random dense equality constrained QCQPs on the unit sphere, described in Section \ref{sec:experiment.random.QCQP.sphere}, with $(l_g,l_h)=(0,1)$ and $k=1$.}
    \label{tab:exam1.d.equal.2}
\scriptsize
\begin{center}
    \begin{tabular}{|c|c|c|c|c|c|c|c|}
        \hline
        \multicolumn{1}{|c|}{\multirow{2}{*}{POP size}} &\multicolumn{2}{c|}{SumOfSquares}                                        & \multicolumn{4}{c|}{SpectralPOP (CTP)}                          \\ \cline{4-7}
        \multicolumn{1}{|c|}{} & \multicolumn{2}{c|}{(Mosek)}                                                                          & \multicolumn{2}{c|} {LMBM} & \multicolumn{2}{c|}{SketchyCGAL}                                                            \\ \hline
        $n$ & val& time& val& time&val& time
        \\ 
        \hline
 50 & -6.03407$^*$ &0.4& -6.03407$^*$ & 0.2 & -6.00885 &0.1 \\ \hline
75 & -6.80575$^*$ &3.0 & -6.80575$^*$ &0.3  & -6.63839 &0.2 \\ \hline
100 & -7.40739$^*$ & 12.9& -7.40739$^*$ & 0.6  & -7.33078 &1.0 \\ \hline
 125& -9.08461$^*$& 35.6 & -9.08461$^*$ & 0.8  & -9.01115&1.3  \\ \hline
150 & -9.10803$^*$& 85.5 & -9.10803$^*$& 1.3 & -9.01721 &1.5 \\ \hline
175 & -10.80922$^*$ & 156.7& -10.80922$^*$ & 1.7 & -10.67402&1.9 \\ \hline
200 & -10.73626$^*$& 367.7 & -10.73626$^*$ & 2.1  & -10.66782&3.7 \\ \hline
250 & -12.21817$^*$& 1362.3 & -12.21817$^*$& 4.8 &  -12.12735&6.3 \\ \hline
300 & -13.77690$^*$& 4039.2 & -13.77690$^*$ & 6.5  &-13.77146 &29.7 \\ \hline
350 & $-$& $-$ & -14.23574$^*$ & 13.8  &  -14.14768&18.8 \\ \hline
 400 & $-$ & $-$ & -16.78926$^*$& 16.5   & -16.54410&18.6 \\ \hline
 500 & $-$ & $-$& -18.72305$^*$& 47.8  &-18.72205&421.5  \\ \hline
 700 & $-$ & $-$& -20.75451$^*$ &126.3  & -20.59610&157.3 \\ \hline
 900 & $-$ & $-$& -24.39911$^*$ & 322.8&   -24.38234 & 571.6\\ \hline
 1200 & $-$ & $-$& -28.99977$^*$ & 697.6 & -28.93762  &752.1 \\ \hline
 1500 & $-$ & $-$& -32.09837$^*$  & 3561.9 &  -32.02957  & 3840.1\\ \hline
    \end{tabular}
    \end{center}
\end{table}

\begin{figure}
    \centering
    \subfigure{
    \begin{tikzpicture}[scale=\textwidth/20cm,samples=200]
\begin{axis}[
    log ticks with fixed point,
    x tick label style={/pgf/number format/1000 sep=\,},
    xlabel={$n$},
    ylabel={time},
    xmin=0, xmax=1500,
    ymin=-200, ymax=4039.2,
    xtick={50,300,500,1000,1500},
    ytick={0,500,1500,3000},
    legend pos=north west,
    ymajorgrids=true,
    xmajorgrids=true,
    grid style=dashed,
]
 
\addplot[
    color=blue
    ]
    coordinates {(50,0.4) (75,3.0) (100,12.9) (125,35.6) (150,85.5) (175,156.7) (200,367.7) (250,1362.3) (300,4039.2)};
    
\addplot[
    color=red,
    ]
    coordinates {(50,0.2) (75,0.3) (100,0.6) (125,0.8) (150,1.3) (175,1.7) (200,2.1) (250,4.8) (300,6.5) (350,13.8) (400,16.5) (500,47.8) (700,126.3) (900,322.8) (1200,697.6) (1500,3561.9)};
    
\addplot[
    color=green,
    ]
    coordinates {(50,0.1) (75,0.2) (100,1.0) (125,1.3) (150,1.5) (175,1.9) (200,3.7) (250,6.3) (300,29.7) (350,18.8) (400,18.6) (500,421.5) (700,157.3) (900,571.6) (1200,752.1) (1500,3840.1)};
    
    \legend{SumOfSquares, LMBM,SketchyCGAL}
 
\end{axis}
\end{tikzpicture}
}
\hfill
\subfigure{
\begin{tikzpicture}[scale=\textwidth/20cm,samples=200]
\begin{axis}[
    log ticks with fixed point,
    x tick label style={/pgf/number format/1000 sep=\,},
    xlabel={$n$},
    ylabel={gap},
    xmin=40, xmax=300,
    ymin=-0.003, ymax=0.05,
    xtick={50,100,200,300},
    ytick={0,0.01,0.02,0.03},
    legend pos=north west,
    ymajorgrids=true,
    xmajorgrids=true,
    grid style=dashed,
]
 
\addplot[
    color=blue
    ]
    coordinates {(50,0.0) (75,0.0) (100,0.0) (125,0.0) (150,0.0) (175,0.0) (200,0.0) (250,0.0) (300,0.0)};
    
\addplot[
    color=red,
    ]
    coordinates {(50,0.00001) (75,0.00001) (100,0.00001) (125,0.00001) (150,0.00001) (175,0.00001) (200,0.00001) (250,0.00001) (300,0.00001)};
    
\addplot[
    color=green,
    ]
    coordinates {(50,0.00419) (75,0.02459) (100,0.01034) (125,0.00808) (150,0.00997) (175,0.01250) (200,0.01250) (250,0.00743) (300,0.00039) };
    
    \legend{SumOfSquares, LMBM,SketchyCGAL}
 
\end{axis}
\end{tikzpicture}
}
    \caption{Efficiency and accuracy comparison for Table \ref{tab:exam1.d.equal.2}.}
    \label{fig:exam1.d.equal.2}
  \end{figure}


\begin{table}
    \caption{\small Numerical results for random dense  equality constrained QCQPs on the unit sphere, described in Section \ref{sec:experiment.random.QCQP.sphere}, with $(l_g,l_h)=(0,\lceil n/4\rceil)$ and $k=1$.}
    \label{tab:random.qcqp.first.relax}
\scriptsize
\begin{center}
\begin{tabular}{|c|c|c|c|c|c|c|c|c|}
        \hline
        \multicolumn{2}{|c|}{\multirow{2}{*}{POP size}}
        &\multicolumn{2}{c|}{SumOfSquares}                                        & \multicolumn{4}{c|}{SpectralPOP (CTP)}                          \\ \cline{5-8}
        \multicolumn{2}{|c|}{} & \multicolumn{2}{c|}{(Mosek)}                                                                          & \multicolumn{2}{c|} {LMBM} & \multicolumn{2}{c|}{SketchyCGAL}                                                            \\ \hline
$n$ & $l_h$ &\multicolumn{1}{c|}{val}& \multicolumn{1}{c|}{time}& \multicolumn{1}{c|}{val}& \multicolumn{1}{c|}{time}& \multicolumn{1}{c|}{val}& \multicolumn{1}{c|}{time}\\
\hline
50 & 14  & -4.80042$^*$&0.4 &-4.03646&0.6&-4.69448& 1.0\\\hline
60 & 16  & -3.95202&1.3 &-3.95202&0.9&-3.87651& 11.5\\\hline
70 & 19  & -6.14933&2.6 &-6.14933&1.1&-6.03721&4271.5\\\hline
80 & 21 &-6.20506$^*$&5.4 & -6.20506$^*$&1.8 &$-$&$-$\\\hline
100 & 26 &-6.58470&15.3 & -6.58470&3.6 &$-$&$-$\\\hline
120 & 31 &-6.96083&31.4 & -6.96083&7.8 &$-$&$-$\\\hline
150 & 39 &-6.92036&111.1 & -6.92036&17.8 &$-$&$-$\\\hline
200 & 51 &-10.13460&479.6 & -10.13460&70.0 &$-$&$-$\\\hline
300 & 76 &-11.86224&4761.1 & -11.86224&404.5 &$-$&$-$\\\hline
400 & 76 &$-$&$-$ & -13.28067&999.2 &$-$&$-$\\\hline
\end{tabular}    
\end{center}
\end{table}

\begin{figure}
    \centering
    \subfigure{
    \begin{tikzpicture}[scale=\textwidth/20cm,samples=200]
\begin{axis}[
    log ticks with fixed point,
    x tick label style={/pgf/number format/1000 sep=\,},
    xlabel={$n$},
    ylabel={time},
    xmin=47, xmax=400,
    ymin=-200, ymax=4761.1,
    xtick={50,100,200,300,400},
    ytick={0,500,1500,3000},
    legend pos=north west,
    ymajorgrids=true,
    xmajorgrids=true,
    grid style=dashed,
]
 
\addplot[
    color=blue
    ]
    coordinates {(50,0.4) (60,1.3) (70,2.6) (80,5.4) (100,15.3) (120,31.4) (150,111.1) (200,479.6) (300,4761.1)};
    
\addplot[
    color=red,
    ]
    coordinates {(50,0.3) (60,0.9) (70,1.1) (80,1.8) (100,3.6) (120,7.8) (150,17.8) (200,70.0) (300,404.5) (400,999.2)};
    
\addplot[
    color=green,
    ]
    coordinates {(50,1.0) (60,11.5) (70,4271.5)};
    
    \legend{SumOfSquares, LMBM,SketchyCGAL}
 
\end{axis}
\end{tikzpicture}
}
\hfill
\subfigure{
\begin{tikzpicture}[scale=\textwidth/20cm,samples=200]
\begin{axis}[
    log ticks with fixed point,
    x tick label style={/pgf/number format/1000 sep=\,},
    xlabel={$n$},
    ylabel={gap},
    xmin=40, xmax=300,
    ymin=-0.003, ymax=0.05,
    xtick={50,100,200,300},
    ytick={0,0.01,0.02,0.03},
    legend pos=north west,
    ymajorgrids=true,
    xmajorgrids=true,
    grid style=dashed,
]
 
\addplot[
    color=blue
    ]
    coordinates {(50,0) (60,0) (70,0) (80,0) (100,0) (120,0) (150,0) (200,0) (300,0)};
    
\addplot[
    color=red,
    ]
    coordinates {(50,0) (60,0) (70,0) (80,0) (100,0) (120,0) (150,0) (200,0) (300,0)};
    
\addplot[
    color=green,
    ]
    coordinates {(50,0.02206) (60,0.01910) (70,0.01823)};
    
    \legend{SumOfSquares, LMBM,SketchyCGAL}
 
\end{axis}
\end{tikzpicture}
}
    \caption{Efficiency and accuracy comparison for Table \ref{tab:random.qcqp.first.relax}.}
    \label{fig:random.qcqp.first.relax}
  \end{figure}

\begin{table}
    \caption{\small Numerical results for random dense  equality constrained QCQPs on the unit sphere, described in Section \ref{sec:experiment.random.QCQP.sphere}, with $(l_g,l_h)=(0,\lceil n/4\rceil)$ and $k=2$.}
    \label{tab:random.qcqp}
\scriptsize
\begin{center}
\begin{tabular}{|c|c|c|c|c|c|c|c|c|c|c|}
        \hline
        \multicolumn{2}{|c|}{\multirow{2}{*}{POP size}}
        &\multicolumn{2}{c|}{\multirow{2}{*}{SDP size}}
        &\multicolumn{2}{c|}{SumOfSquares}                                        & \multicolumn{4}{c|}{SpectralPOP (CTP)}                          \\ \cline{7-10}
        \multicolumn{2}{|c|}{}& \multicolumn{2}{c|}{} & \multicolumn{2}{c|}{(Mosek)}                                                                          & \multicolumn{2}{c|} {LMBM} & \multicolumn{2}{c|}{SketchyCGAL}                                                            \\ \hline
$n$ & $l_h$&$s$&$m$ &\multicolumn{1}{c|}{val}& \multicolumn{1}{c|}{time}& \multicolumn{1}{c|}{val}& \multicolumn{1}{c|}{time}& \multicolumn{1}{c|}{val}& \multicolumn{1}{c|}{time}\\
\hline
5 & 2 & 21 & 148 & -2.32084$^*$&0.01 &-2.32084$^*$&0.2&-2.29957& 0.7\\\hline
10& 3 &66 & 1409 & -1.07536$^*$ &0.2 &-1.07536$^*$ &0.3 & -1.06480 &5.1\\\hline
15& 4 &136 & 5985 & -1.12894$^*$ & 5.6&-1.12894$^*$& 0.7 & -1.11512&55.2\\\hline
20& 5 & 231 & 17326 &-2.48514$^*$& 52.1&-2.48514$^*$& 2.2 & -2.46573&505.4\\\hline
25& 7 & 351 & 40483 &-2.80478$^*$& 460.8&-2.80478$^*$ & 16.2& -2.79507& 2127.2\\\hline
30& 8 &496 & 80849&-2.84989$^*$ & 3797.5&-2.84989$^*$ & 19.2& -2.83486& 2656.8\\\hline
35& 9 &666 & 145855&$-$ & $-$&-4.23210$^*$& 75.8  & $-$& $-$\\\hline
40& 10 &861& 243951 &$-$ & $-$&-4.49644$^*$ & 99.7 & $-$& $-$\\\hline
45& 12 &1081& 385918 &$-$ & $-$&-3.24527 & 256.8 & $-$& $-$\\\hline
50& 13 & 1326 &580789 &$-$& $-$&-4.16019& 351.9  & $-$& $-$\\\hline
55& 14 & 1596 &841625 &$-$& $-$&-3.71963& 799.5 &$-$& $-$\\\hline
60& 15 & 1891 &1181876 &$-$& $-$&-5.76124& 1800.1& $-$& $-$\\\hline
65& 15 & 2211 &1618453 &$-$& $-$&-4.61797 & 2714.4 & $-$& $-$\\\hline
\end{tabular}    
\end{center}
\end{table}

\begin{figure}
    \centering
    \subfigure{
    \begin{tikzpicture}[scale=\textwidth/20cm,samples=200]
\begin{axis}[
    log ticks with fixed point,
    x tick label style={/pgf/number format/1000 sep=\,},
    xlabel={$n$},
    ylabel={time},
    xmin=0, xmax=65,
    ymin=-200, ymax=3797.5,
    xtick={5,20,35,50,65},
    ytick={0,500,1500,3000},
    legend pos=north west,
    ymajorgrids=true,
    xmajorgrids=true,
    grid style=dashed,
]
 
\addplot[
    color=blue
    ]
    coordinates {(5,0.01) (10,0.2) (15,5.6) (20,52.1) (25,460.8) (30,3797.5)};
    
\addplot[
    color=red,
    ]
    coordinates {(5,0.2) (10,0.3) (15,0.7) (20,2.2) (25,16.2) (30,19.2) (35,75.8) (40,99.7) (45,256.8) (50,351.9) (55,799.5) (60,1800.1) (65,2714.4)};
    
\addplot[
    color=green,
    ]
    coordinates {(5,0.7) (10,5.1) (15,55.2) (20,505.4) (25,2127.2) (30,2656.8)};
    
    \legend{SumOfSquares, LMBM,SketchyCGAL}
 
\end{axis}
\end{tikzpicture}
}
\hfill
\subfigure{
\begin{tikzpicture}[scale=\textwidth/20cm,samples=200]
\begin{axis}[
    log ticks with fixed point,
    x tick label style={/pgf/number format/1000 sep=\,},
    xlabel={$n$},
    ylabel={gap},
    xmin=4, xmax=30,
    ymin=-0.0005, ymax=0.02,
    xtick={5,10,20,30},
    ytick={0,0.005,0.01,0.015},
    legend pos=north west,
    ymajorgrids=true,
    xmajorgrids=true,
    grid style=dashed,
]
 
\addplot[
    color=blue
    ]
    coordinates {(5,0.0) (10,0.0) (15,0.0) (20,0.0) (25,0.0) (30,0.0)};
    
\addplot[
    color=red,
    ]
    coordinates {(5,0.0) (10,0.0) (15,0.0) (20,0.0) (25,0.0) (30,0.0)};
    
\addplot[
    color=green,
    ]
    coordinates {(5,0.00916) (10,0.00981) (15,0.01224) (20,0.00781) (25,0.00346) (30,0.00527)};
    
    \legend{SumOfSquares, LMBM,SketchyCGAL}
 
\end{axis}
\end{tikzpicture}
}
    \caption{Efficiency and accuracy comparison for Table \ref{tab:random.qcqp}.}
    \label{fig:random.qcqp}
  \end{figure}

\paragraph{Efficiency comparison:}
In Table \ref{tab:exam1.d.equal.2}, we  minimize quadratic polynomials on the unit sphere. 
%
This relaxation for a POP in $n$ variables involves an SDP matrix of size $n+1$ and $2$ trace equality constraints. 
In this table, LMBM is the  fastest SDP solver while Mosek (the SDP solver used by SumOfSquares) is the  slowest. 
It is due to the fact that Mosek  relies on interior-point methods based on second order conditions to solve SDP while LMBM and SketchyCGAL only rely on algorithms based on first order conditions.
Note that we use the same modeling technique to generate the SDP-CTP relaxation solved with either SketchyCGAL or LMBM, so both related modeling times are the same. 
The solving time of SketchyCGAL is a bit smaller (resp. larger) than the one of LMBM when $n\le 400$ (resp. $n\ge 500$).

In Table \ref{tab:random.qcqp.first.relax} and Table  \ref{tab:random.qcqp}, we consider random equality constrained QCQPs and solve their first ($k=1$) and second ($k=2$) order moment relaxation, respectively.
In Table \ref{tab:random.qcqp.first.relax}, the size of the positive semidefinite matrix (resp. the number of trace equality constraints) involved in the SDP relaxation is equal to $n+1$ (resp. $l_h+1$).
In Table \ref{tab:random.qcqp}, the matrices involved in the SDP relaxation  have size $\stirlingii {n} {4}$ and the number of  trace equality constraints is $\mathcal{O}(\stirlingii {n} {4}^2)$, due to \eqref{eq:bound.mk}. 
Thus, the number of trace equality constraints for these SDP relaxations is more than 200 times larger than the matrix size, for almost all instance of Table \ref{tab:random.qcqp}. 
LMBM still happens to be the fastest solver in both Table \ref{tab:random.qcqp.first.relax} and Table  \ref{tab:random.qcqp}, but SumOfsquares is more  efficient than SketchyCGAL.
The most expensive step performed by Mosek (used by SumOfsquares) is to solve a  system of linear equations  coming from certain  complementarity conditions (see page 13 in \cite{dahl2012semidefinite} for more details). 
The linear system becomes harder to solve when the number of trace equality constraints is larger. 
This is in contrast with LMBM, which does not need to solve any such large size linear system of equations.
By comparison with LMBM, SketchyCGAL may perform a larger number of operations  \cite[Algorithm 6.1]{yurtsever2019scalable},  as emphasized later on.

\paragraph{Accuracy comparison:}
When $n\le 300$ in Table \ref{tab:exam1.d.equal.2},  $n\le 300$ in Table \ref{tab:random.qcqp.first.relax} or $n \le 20$ in Table \ref{tab:random.qcqp}, LMBM  converges to the exact optimal value of POPs with high accuracy, similarly to SumOfSquares. 
Both LMBM and SumOfSquares  can extract at least one approximate optimal solution by Henrion-Lasserre's algorithm  \cite{henrion2005detecting},  when $n\le 300$ in Table \ref{tab:exam1.d.equal.2} or $n \le 20$ in Table \ref{tab:random.qcqp}.
Moreover, LMBM can provide an approximate optimal solution even for large-scale problems with $n=1500$ in Table \ref{tab:exam1.d.equal.2} (resp. $n=40$ in Table  \ref{tab:random.qcqp}) and in several cases in Table \ref{tab:random.qcqp.first.relax}. Unfortunately SketchyCGAL cannot do the extraction procedure  successfully, because of its inaccurate output.

\paragraph{Storage and evaluation comparisons:} 
In Table \ref{tab:storage.comp} and \ref{tab:eval.comp}, we display some additional information related to Mosek, LMBM and SketchyCGAL, for the rows $n=5,10,15,20,25$ of  Table \ref{tab:random.qcqp}:
\begin{itemize}
    \item storage;
    \item $\# \mathcal{A}$: the number of evaluations of the linear operator $\mathcal{A}$ in SDP \eqref{eq:SDP.form.0};
    \item $\# \mathcal{A}^T$: the number of evaluations of the adjoint operator $\mathcal{A}^T$;
    \item $s_{\max}$: the largest size of symmetric matrices of which eigenvalues and eigenvectors are computed;
    \item $N_{\eig}$: the number of symmetric matrices of which eigenvalues and eigenvectors are computed.
\end{itemize}

\begin{table}
    \caption{\small Storage comparisons for the rows $n=5,10,15,20,25$ of  Table \ref{tab:random.qcqp}.}
    \label{tab:storage.comp}
\scriptsize
\begin{center}
\begin{tabular}{|c|c|c|c|}
      \hline
        \multicolumn{1}{|c|}{\multirow{2}{*}{}}
        &\multicolumn{1}{c|}{SumOfSquares}                                        & \multicolumn{2}{c|}{SpectralPOP (CTP)}                          \\ \cline{3-4}
        \multicolumn{1}{|c|}{}& \multicolumn{1}{c|}{(Mosek)}                                                                          & \multicolumn{1}{c|} {LMBM} & \multicolumn{1}{c|}{SketchyCGAL} \\
\hline
$n$& storage& storage&storage\\
\hline
5& 9.4 MB & 29 MB &1.1 GB\\
\hline
10& 91 MB & 69 MB &39 GB \\
\hline
15& 422 MB & 351 MB &320 GB \\
\hline
20& 1.3 GB & 1.2 GB &1.3 TB  \\
\hline
25& 3.5 GB & 4.2 GB &3.3 TB \\
\hline
\end{tabular}    
\end{center}
\end{table}

\begin{table}
    \caption{\small  Evaluation comparisons for the rows $n=5,10,15,20,25$ of  Table \ref{tab:random.qcqp}.}
    \label{tab:eval.comp}
\scriptsize
\begin{center}
\begin{tabular}{|c|c|c|c|c|c|c|c|c|}
      \hline
        \multicolumn{1}{|c|}{\multirow{2}{*}{}}
   & \multicolumn{8}{c|}{SpectralPOP (CTP)}                          \\ \cline{2-9}
        \multicolumn{1}{|c|}{}         &\multicolumn{4}{c|} {LMBM} & \multicolumn{4}{c|}{SketchyCGAL} \\
\hline
$n$&$\# \mathcal{A}$&$\# \mathcal{A}^T$&$s_{\max}$&$N_{\eig}$&$\# \mathcal{A}$&$\# \mathcal{A}^T$&$s_{\max}$&$N_{\eig}$\\
\hline
5&   21& 22&21 & 22 &1179 &18618 &18 & 1180\\
\hline
10 &32 &33 &66 &33 &1199 &25489 &25 &1200 \\
\hline
15& 840&841 &136 &841 &7699 & 294999& 47&7700 \\
\hline
20& 124 &125 &231 &125  &2492 &80467 &39 &2493 \\
\hline
25&9066 &9067 &351 & 9067& 2596 &90835 &42 & 2597\\
\hline
\end{tabular}    
\end{center}
\end{table}

Table \ref{tab:storage.comp} indicates that SumOfSquares requires a bit lower storage than LMBM only for the cases $n=5,25$. 
\if{LMBM storage is a bit larger than the one of SumOfSquares when $n=15,20$. }\fi
However, SketchyCGAL requires a  much larger storage than LMBM and SumOfSquares. 
It is due to the fact that SketchyCGAL performs a large number of evaluations of $\mathcal{A}$ and $\mathcal{A}^T$ while relying on three specific primitive computations (see \cite[Section 2.3]{yurtsever2019scalable}).
Compared to SketchyCGAL, LMBM performs a smaller number of evaluations. 
For instance, the number of  evaluations of LMBM is ten times smaller than the one of SketchyCGAL for the row  $n=25$ of Table \ref{tab:eval.comp}. 
Because of the large number $m$ of trace equality constraints, the evaluations of $\mathcal{A}$ and $\mathcal{A}^T$ in SDP relaxations of POPs is more expensive than the simple one related to the first order SDP relaxation of MAXCUT, which is solved  very efficiently by SketchyCGAL  (see \cite[Section 2.5]{yurtsever2019scalable}). 

These specific behaviors mainly come from the subroutines used by LMBM and SketchyCGAL to compute eigenvalues and eigenvectors.
While LMBM computes directly the largest eigenvector (and corresponding eigenvalue) of the matrix 
$\mathbf C-\mathcal{A}^T \mathbf z$ involved in the nonsmooth function from  \eqref{eq:func.phik.0}, SketchyCGAL computes indirectly the smallest eigenvalue of the matrix $\mathbf C + \mathcal{A}^T (\mathbf y+\beta (\mathbf z -\mathbf b))$ in Step 8 of \cite[Algorithm 6.1]{yurtsever2019scalable} while relying on the so-called ``ApproxMinEvec'' subroutine. 
When the ApproxMinEvec subroutine is implemented via \cite[Algorithm 4.2]{yurtsever2019scalable},  SketchyCGAL provides approximations of the smallest eigenvalue and eigenvector of each matrix $\mathbf C + \mathcal{A}^T (\mathbf y+\beta(\mathbf z -\mathbf b))$ by using the  randomized Lanczos method. 
It only requires to compute the smallest eigenvalue and eigenvector of a tridiagonal matrix of small size (e.g. $s_{\max}=42$ when $n=25$ in Table \ref{tab:eval.comp} while the value $s_{\max}$ of LMBM is 351).
Besides, SketchyCGAL  computes $\mathbf v_i^T (\mathbf C + \mathcal{A}^T (\mathbf y+\beta(\mathbf z -\mathbf b)) \mathbf v_i$ \footnote{the vector $\mathbf v_i$ is updated in Step 6 of  \cite[Algorithm 4.2]{yurtsever2019scalable}} within the loop from Step 5 of \cite[Algorithm 4.2]{yurtsever2019scalable} while relying on three primitive computations (see \cite[(2.4)]{yurtsever2019scalable} for more details), which yields a large number of evaluations of $\mathcal{A}^T$.
Because of its slow convergence, SketchyCGAL runs a larger number of iterations in Step 6 of \cite[Algorithm 6.1]{yurtsever2019scalable}.
Thus it computes a large number of evaluations of $\mathcal A$ in Step 9 of \cite[Algorithm 4.2]{yurtsever2019scalable},  e.g. $\# \mathcal A=2492$ when $n=20$ while the value $\# \mathcal A$ is 124 for LMBM.

Based on the above comparison, we emphasize that LMBM is  cheaper and faster than Mosek or SketchyCGAL while LMBM ensures the same accuracy as Mosek when solving SDP relaxations of equality constrained QCQPs on the unit sphere.

\subsubsection{Random dense QCQPs on the unit ball}
\label{sec:experiment.random.QCQP.ball}
\paragraph{Test problems:} We construct several samples of POP \eqref{eq:POP.on.variety.ball} as follows:
\begin{enumerate}
    \item Take $g_1=1-\|x\|_2^2$ and choose $f$, $g_i$, $i\in[l_g]\backslash\{1\}$, and $h_j$, $j\in [l_h]$ with degrees at most $2$;
    \item Each coefficient of the objective function $f$ is taken randomly in $(-1,1)$ with respect to the uniform distribution;
    \item Select a random point $\mathbf a\in \R^n$ in the unit ball, with respect to the uniform distribution;
    \item For each  $i\in[l_g]\backslash\{1\}$, all non-constant coefficients of $g_i$ are taken randomly in $(-1,1)$ with respect to the uniform distribution, and the constant coefficient of $g_i$ is chosen such that $g_i(\mathbf a)>0$;
    \item For $j\in[l_h]$, all non-constant coefficients of $h_j$ are taken randomly in $(-1,1)$ with respect to the uniform distribution, and the constant coefficient of $h_j$ is chosen such that  $h_j(\mathbf a)=0$.
\end{enumerate}
Numerical results are displayed in Table \ref{tab:exam:random.dense.qcqp.on.ball} for the case $(l_g,l_h)=(1,\lceil n/4\rceil)$ and Table \ref{tab:random.qcqp.2} for the case $(l_g,l_h)=(\lceil n/8\rceil,\lceil n/8\rceil)$. 
We recall the following notation:
\begin{itemize}
    \item CTP (LMBM): the SDP relaxation is solved via the method described in Section \ref{sec:POP.compact.case1} (the $k$-th iteration of Algorithm \ref{alg:sol.nonsmooth.hier.on.unique.ball}) or Section \ref{sec:POP.compact.case2} (the $k$-th iteration of Algorithm \ref{alg:sol.nonsmooth.hier.on.general.ball}) with the LMBM solver. 
    \item BTP (LMBM): the  SDP relaxation is solved via the method described in Remark \ref{re:BTP.for.POP} with the LMBM solver  (Algorithm \ref{alg:sol.SDP.CTP.0.btp}).

\end{itemize}
In Table \ref{tab:exam:random.dense.qcqp.on.ball} and Table \ref{tab:random.qcqp.2}, SumOfSquares and BTP solve relaxations involving matrices with the same  size, corresponding exactly to the size of the moment relaxation  \eqref{eq:moment.hierarchy.ball}.
\begin{table}
    \caption{\small Numerical results of random dense QCQPs on the unit ball, described in Section \ref{sec:experiment.random.QCQP.ball}, with $(l_g,l_h)=(1,\lceil n/4\rceil)$, and $k=2$.}
    \label{tab:exam:random.dense.qcqp.on.ball}
\scriptsize
\begin{center}
   \begin{tabular}{|c|c|c|c|c|c|c|c|c|c|c|}
        \hline
        \multicolumn{2}{|c|}{\multirow{2}{*}{POP size}}
        &\multicolumn{2}{c|}{SDP size}
        &\multicolumn{2}{c|}{SumOfSquares}                                        & \multicolumn{4}{c|}{SpectralPOP}                          \\ \cline{7-10}
        \multicolumn{2}{|c|}{}& \multicolumn{2}{c|}{(CTP)} & \multicolumn{2}{c|}{(Mosek)}                                                                          & \multicolumn{2}{c|} {CTP (LMBM)} & \multicolumn{2}{c|}{BTP (LMBM)}                                                            \\ \hline
$n$ & $l_h$&$s$&$m$ &
\multicolumn{1}{c|}{val}& \multicolumn{1}{c|}{time}& \multicolumn{1}{c|}{val}& \multicolumn{1}{c|}{time}& \multicolumn{1}{c|}{val}& \multicolumn{1}{c|}{time}\\
\hline
 5& 2 & 28 &281 &-2.37513$^*$ &0.03 &-2.37513$^*$&0.2&-3.43291& 6.8\\
\hline
 10& 3 & 78 &2029& -2.31074$^*$&0.2 &-2.31074$^*$&0.4&-2.89248&18.6 \\
\hline
 15& 4 & 153 &7702& -2.32752$^*$& 5.3&-2.32752$^*$&0.7&-3.26317&396.5 \\
\hline
 20& 5 & 253 &21000& -3.52091$^*$&60.0 &-3.52091$^*$&1.7&-4.88156& 3226.2\\
\hline
25& 7 & 378 &47251& -4.35441$^*$& 460.4&-4.35441$^*$&7.1&$-$&$-$ \\
\hline
30& 8 & 528 &92049& -2.98326$^*$&3484.6 &-2.98326$^*$&28.0&$-$&$-$\\
\hline
35& 9 & 703 &163097&$-$ &$-$ &-4.09827&139.3&$-$&$-$\\
\hline
40& 11 & 903 &269095&$-$ &$-$ &-3.82947&181.9 &$-$&$-$\\
\hline
45& 12 & 1128 &421121&$-$ &$-$ &-4.12012&276.3&$-$&$-$ \\
\hline
50& 13 & 1378 &628369&$-$ &$-$ &-5.02577&3328.4&$-$&$-$ \\
\hline
\end{tabular}    
\end{center}
\end{table}

\begin{figure}
    \centering
    \subfigure{
    \begin{tikzpicture}[scale=\textwidth/20cm,samples=200]
\begin{axis}[
    log ticks with fixed point,
    x tick label style={/pgf/number format/1000 sep=\,},
    xlabel={$n$},
    ylabel={time},
    xmin=3, xmax=50,
    ymin=-200, ymax=3484.6,
    xtick={5,10,25,40,50},
    ytick={0,500,1500,3000},
    legend pos=north west,
    ymajorgrids=true,
    xmajorgrids=true,
    grid style=dashed,
]
 
\addplot[
    color=blue
    ]
    coordinates {(5,0.03) (10,0.2) (15,5.3) (20,60.0) (25,460.4) (30,3484.6)};
    
\addplot[
    color=red,
    ]
    coordinates {(5,0.2) (10,0.4) (15,0.7) (20,1.7) (25,7.1) (30,28.0) (35,139.3) (40,181.9) (45,276.3) (50,3328.4)};
    
\addplot[
    color=gray,
    ]
    coordinates {(5,6.8) (10,18.6) (15,396.5) (20,3226.2)};
    
    \legend{SumOfSquares, CTP (LMBM),BTP (LMBM)}
 
\end{axis}
\end{tikzpicture}
}
\hfill
\subfigure{
\begin{tikzpicture}[scale=\textwidth/20cm,samples=200]
\begin{axis}[
    log ticks with fixed point,
    x tick label style={/pgf/number format/1000 sep=\,},
    xlabel={$n$},
    ylabel={gap},
    xmin=4.3, xmax=30,
    ymin=-0.03, ymax=0.7,
    xtick={5,10,20,30},
    ytick={0,0.1,0.3,0.5},
    legend pos=north west,
    ymajorgrids=true,
    xmajorgrids=true,
    grid style=dashed,
]
 
\addplot[
    color=blue
    ]
    coordinates {(5,0) (10,0) (15,0) (20,0) (25,0) (30,0)};
    
\addplot[
    color=red,
    ]
    coordinates {(5,0) (10,0) (15,0) (20,0) (25,0) (30,0)};
    
\addplot[
    color=gray,
    ]
    coordinates {(5,0.44535) (10,0.25175) (15,0.40199) (20,0.38644)};
    
    \legend{SumOfSquares, CTP (LMBM),BTP (LMBM)}
 
\end{axis}
\end{tikzpicture}
}
    \caption{Efficiency and accuracy comparison for Table \ref{tab:exam:random.dense.qcqp.on.ball}.}
    \label{fig:exam:random.dense.qcqp.on.ball}
  \end{figure}

\begin{table}
    \caption{\small Numerical results of random dense QCQPs on the unit ball, described in Section \ref{sec:experiment.random.QCQP.ball}, with $(l_g,l_h)=(\lceil n/8\rceil,\lceil n/8\rceil)$, and $k=2$.}
    \label{tab:random.qcqp.2}
\scriptsize
\begin{center}
\begin{tabular}{|c|c|c|c|c|c|c|c|c|c|c|c|}
        \hline
        \multicolumn{3}{|c|}{\multirow{2}{*}{POP size}}
        &\multicolumn{2}{c|}{SDP size}
        &\multicolumn{2}{c|}{SumOfSquares}                                        & \multicolumn{4}{c|}{SpectralPOP}                          \\ \cline{8-11}
        \multicolumn{3}{|c|}{}& \multicolumn{2}{c|}{(CTP)} & \multicolumn{2}{c|}{(Mosek)}                                                                          & \multicolumn{2}{c|} {CTP (LMBM)} & \multicolumn{2}{c|}{BTP (LMBM)}                                                            \\ \hline
$n$ & $l_g$&$l_h$&$s$&$m$ &
\multicolumn{1}{c|}{val}& \multicolumn{1}{c|}{time}& \multicolumn{1}{c|}{val}& \multicolumn{1}{c|}{time}& \multicolumn{1}{c|}{val}& \multicolumn{1}{c|}{time}\\

 \hline
 10& 2 & 2& 105 &3711&-2.84974$^*$ &0.3 &-2.89467&12.9&-3.83990&6.2\\
\hline
 15& 2 & 2& 190 &11781&-3.49850$^*$ &6.5 &-3.50701&74.4&-4.70315&331.1\\
\hline
20& 3 & 3& 325 &34776&-2.17623$^*$& 161.8&-2.24255&191.6&-2.92872&7926.1\\
\hline
25& 4 & 4& 496 &81345&-3.55976$^*$ &1382.1 &-3.95982&975.4&$-$&$-$\\
\hline
30& 4 & 4& 666 &145855& -5.18136$^*$ &6605.8 &-5.41834&1118.3&$-$&$-$\\
\hline
35& 5 & 5& 903 &269095&$-$&$-$ &-5.30314&6983.6&$-$&$-$\\
\hline
\end{tabular}    
\end{center}
\end{table}

\begin{figure}
    \centering
    \subfigure{
    \begin{tikzpicture}[scale=\textwidth/20cm,samples=200]
\begin{axis}[
    log ticks with fixed point,
    x tick label style={/pgf/number format/1000 sep=\,},
    xlabel={$n$},
    ylabel={time},
    xmin=9.5, xmax=35,
    ymin=-200, ymax=6983.6,
    xtick={10,15,25,35},
    ytick={0,700,3000,6500},
    legend pos=north west,
    ymajorgrids=true,
    xmajorgrids=true,
    grid style=dashed,
]
 
\addplot[
    color=blue
    ]
    coordinates {(10,0.3) (15,6.5) (20,161.8) (25,1382.1) (30,6605.8)};
    
\addplot[
    color=red,
    ]
    coordinates {(10,12.9) (15,74.4) (20,191.6) (25,975.4) (30,1118.3) (35,6983.6)};
    
\addplot[
    color=gray,
    ]
    coordinates {(10,6.2) (15,331.1) (20,7926)};
    
    \legend{SumOfSquares,CTP (LMBM),BTP (LMBM)}
 
\end{axis}
\end{tikzpicture}
}
\hfill
\subfigure{
\begin{tikzpicture}[scale=\textwidth/20cm,samples=200]
\begin{axis}[
    log ticks with fixed point,
    x tick label style={/pgf/number format/1000 sep=\,},
    xlabel={$n$},
    ylabel={gap},
    xmin=9.5, xmax=30,
    ymin=-0.01, ymax=0.6,
    xtick={10,15,20,25,30},
    ytick={0,0.1,0.2,0.34},
    legend pos=north west,
    ymajorgrids=true,
    xmajorgrids=true,
    grid style=dashed,
]
 
\addplot[
    color=blue
    ]
    coordinates {(10,0) (15,0) (20,0) (25,0) (30,0)};
    
\addplot[
    color=red,
    ]
    coordinates {(10,0.01576) (15,0.00243) (20,0.03047) (25,0.11238) (30,0.04573)};
    
\addplot[
    color=gray,
    ]
    coordinates {(10,0.34745) (15,0.34433) (20,0.34577)};
    
    \legend{SumOfSquares, CTP (LMBM),BTP (LMBM)}
 
\end{axis}
\end{tikzpicture}
}
    \caption{Efficiency and accuracy comparison for Table \ref{tab:random.qcqp.2}.}
    \label{fig:random.qcqp.2}
  \end{figure}
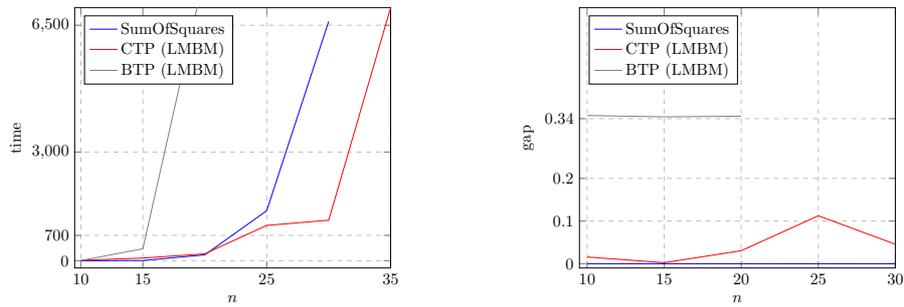

\paragraph{Efficiency and accuracy comparisons:} 
In Table \ref{tab:exam:random.dense.qcqp.on.ball}, we consider POPs which involve a single  inequality (ball) constraint.
In this case, CTP (LMBM) is the most  efficient and accurate solver.
Numerical results emphasize that SumOfSquares and CTP (LMBM) behave in a similar way as in Table \ref{tab:random.qcqp}.
This indicates that converting a POP with a single inequality (ball) constraint to a CTP-POP by adding one slack variable,  and solving the resulting SDP-CTP relaxation by means of spectral methods allows one to reduce the computing time while ensuring the same accuracy as the one obtained with SumOfSquares (Mosek). 
Note that when we use the method described in Section \ref{sec:POP.compact.case1}, the constant trace in \eqref{eq:convert.momentmat.ball.0} is always equal to $2^k$, which is independent of $n$. 

In Table \ref{tab:random.qcqp.2}, CTP (LMBM)  provides inaccurate output as it only yields lower bounds, while SumOfSquares still preserves accuracy.
Moreover, CTP (LMBM) is less (resp. more) efficient than SumOfSquares when $n\le 20$ (resp. $n\ge 25$). 
We also emphasize that when one relies on the method stated in Section \ref{sec:POP.compact.case2}, we obtain a value of ${\bar R}$, in \eqref{eq:create.ball.constraint}, for the sphere constraint of CTP-POP, which becomes larger when $n$ increases. 
It implies that the constant trace factor $(\bar R+1)^k$ in \eqref{eq:convert.momentmat.ball}  has a polynomial growth rate in $\bar R$. Thus we minimize a nonsmooth function of the form \eqref{eq:func.phik.0} with a large constant trace factor $a$.
The norm of the subgradient of this function at a point near its minimizers is rather large, which prevents LMBM to perform properly its minimization, by contrast with Table  \ref{tab:exam:random.dense.qcqp.on.ball}.
This difference of magnitude is shown in Table \ref{tab:norm.subgradients}, where we compute the subgradient norms during the last 10 iterations of CTP (LMBM) for the experiments from Table  \ref{tab:exam:random.dense.qcqp.on.ball} and Table \ref{tab:random.qcqp.2}   with $n=10$.

\begin{table}
    \caption{\small  Subgradient norms computed during the last 10 iterations of CTP (LMBM) for the experiments from Table  \ref{tab:exam:random.dense.qcqp.on.ball} and Table \ref{tab:random.qcqp.2} with $n=10$.}
    \label{tab:norm.subgradients}
\scriptsize
\begin{center}
\begin{tabular}{|c|c|c|c|c|c|c|c|c|c|c|c|c|c|c|}
\hline
Table  \ref{tab:exam:random.dense.qcqp.on.ball}  & 0.185& 0.098& 0.075& 0.097& 0.039& 0.019& 0.010& 0.007& 0.006& 0.002\\
\hline
Table \ref{tab:random.qcqp.2} & 65.9& 39.4& 48.0& 45.0& 37.6& 34.0& 33.9& 34.7& 26.9& 4.3 \\
\hline
\end{tabular}    
\end{center}
\end{table}

In both Table \ref{tab:exam:random.dense.qcqp.on.ball} and Table \ref{tab:random.qcqp.2}, BTP (LMBM) has the worst performance in terms on efficiency and accuracy. 
The trace bound  \eqref{eq:bound.trace.BTP.POP} obtained in Remark \ref{re:BTP.for.POP} is usually much larger than the ``exact'' trace of the optimal solution of the SDP relaxation. 
The same issue occurs for the subgradient norm of the nonsmooth function at a point near its minimizers.

According to our experience, LMBM is suitable for spectral minimization of SDP problems with trace bounds which are small enough and close to the exact trace value of the optimal solution. 
This seems to be the case for POPs with equality constraints and  few inequality constraints, and not for POPs with a significant number
of inequality constraints.

\subsubsection{Random dense quartics on the unit sphere}
\label{experiment:quartics.on.sphere}
\paragraph{Test problems:}
We construct several instances of POP \eqref{eq:POP.on.variety} as follows:
\begin{enumerate}
    \item Take $l_h=1$ and $h_1=1-\|x\|_2^2$ and choose $f$ with degree at most $4$;
    \item Each coefficient of the objective function $f$ is taken randomly in $(-1,1)$ with respect to the uniform distribution.
\end{enumerate}
We use the method presented in Section \ref{sec:POP.sphere} to solve these problems. 
The corresponding numerical results are displayed in Table \ref{tab:quartics.on.sphere}.

\begin{table}
    \caption{\small Numerical results for random dense quartics on the unit sphere, described in Section \ref{experiment:quartics.on.sphere}, with $(l_g,l_h)=(0,1)$ and $k=2$.}
    \label{tab:quartics.on.sphere}
\scriptsize
\begin{center}
    \begin{tabular}{|c|c|c|c|c|c|c|c|c|c|}
        \hline
        \multicolumn{1}{|c|}{\multirow{2}{*}{POP size}}&
        \multicolumn{2}{c|}{\multirow{2}{*}{SDP size}}
        &\multicolumn{2}{c|}{SumOfSquares}                                        & \multicolumn{4}{c|}{SpectralPOP (CTP)}                          \\ \cline{6-9}
        \multicolumn{1}{|c|}{} &
        \multicolumn{2}{c|}{} &
        \multicolumn{2}{c|}{(Mosek)}                                                                          & \multicolumn{2}{c|} {LMBM} & \multicolumn{2}{c|}{SketchyCGAL}                                                            \\ \hline
        $n$ &$s$&$m$& val& time& val& time&val& time
        \\ 
        \hline
5 &21&127& -2.74690$^*$ &0.02& -2.74699$^*$ & 0.3 & -2.72892 &0.3 \\ \hline
10 &66&1277& -3.63546$^*$ &0.4& -3.63585$^*$ & 0.6 & -3.62581 &2.9 \\ \hline
15 &136&5577& -4.06999$^*$ &7.6& -4.07015$^*$ & 2.0 & -4.06057 &34.5 \\ \hline
20 &231&16402& -3.94869$^*$ &83.2& -3.94913$^*$ & 47.2 & -3.94061 &249.9 \\ \hline
25 &351&38377& -4.23647$^*$ &652.4& -4.23699$^*$ & 306.3 & -4.22619 &508.8 \\ \hline
30 &496&77377& -4.24863$^*$ &5214.9& -4.247862 & 2358.9 & -4.23958 & 3323.5\\ \hline
    \end{tabular}
    \end{center}
\end{table}

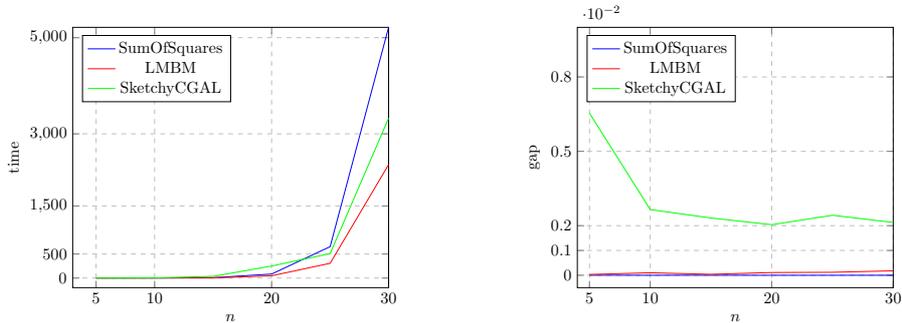
\begin{figure}
    \centering
    \subfigure{
    \begin{tikzpicture}[scale=\textwidth/20cm,samples=200]
\begin{axis}[
    log ticks with fixed point,
    x tick label style={/pgf/number format/1000 sep=\,},
    xlabel={$n$},
    ylabel={time},
    xmin=3, xmax=30,
    ymin=-200, ymax=5214.9,
    xtick={5,10,20,30},
    ytick={0,500,1500,3000,5000},
    legend pos=north west,
    ymajorgrids=true,
    xmajorgrids=true,
    grid style=dashed,
]
 
\addplot[
    color=blue
    ]
    coordinates {(5,0.02) (10,0.4) (15,7.6) (20,83.2) (25,652.4) (30,5214.9)};
    
\addplot[
    color=red,
    ]
    coordinates {(5,0.3) (10,0.6) (15,2.0) (20,47.2) (25,306.3) (30,2358.9)};
    
\addplot[
    color=green,
    ]
    coordinates {(5,0.3) (10,2.9) (15,34.5) (20,249.9) (25,508.8) (30,3323.5)};
    
    \legend{SumOfSquares, LMBM,SketchyCGAL}
 
\end{axis}
\end{tikzpicture}
}
\hfill
\subfigure{
\begin{tikzpicture}[scale=\textwidth/20cm,samples=200]
\begin{axis}[
    log ticks with fixed point,
    x tick label style={/pgf/number format/1000 sep=\,},
    xlabel={$n$},
    ylabel={gap},
    xmin=4, xmax=30,
    ymin=-0.0005, ymax=0.01,
    log basis x={10},
    xtick={5,10,20,30},
    ytick={0,0.001,0.002,0.005,0.008},
    legend pos=north west,
    ymajorgrids=true,
    xmajorgrids=true,
    grid style=dashed,
]
 
\addplot[
    color=blue
    ]
    coordinates {(5,0.0) (10,0.0) (15,0.0) (20,0.0) (25,0.0) (30,0.0)};
    
\addplot[
    color=red,
    ]
    coordinates {(5,3.2e-5) (10,0.00010) (15,3.9e-5) (20,0.00011) (25,0.00012) (30,0.00018)};
    
\addplot[
    color=green,
    ]
    coordinates {(5,0.00654) (10,0.00265) (15,0.00231) (20,0.00204) (25,0.00242) (30,0.00213)};
    
    \legend{SumOfSquares, LMBM,SketchyCGAL}
 
\end{axis}
\end{tikzpicture}
}
    \caption{Efficiency and accuracy comparison for Table \ref{tab:quartics.on.sphere}.}
    \label{fig:quartics.on.sphere}
  \end{figure}

\paragraph{Efficiency and accuracy comparisons:}
Table \ref{tab:quartics.on.sphere} indicates that LMBM is about twice faster than SumOfSquares when $n\ge 10$ as well as SketchyCGAL when $n\ge 25$.
While SketchyCGAL can be rather inaccurate, LMBM has an accuracy which is similar to SumOfSquares (Mosek), yielding the ability to extract optimal solutions of POPs when $n\le 30$.


\subsection{Squared systems of polynomial equations}
\label{sec:experiment.poly.sys}
Here we consider the problem of finding real roots of several squared systems of polynomial equations, issued from 
\href{http://homepages.math.uic.edu/~jan/demo.html}{the database of polynomial systems}
and \cite{didrit1997analyse} (the ``stewgou'' polynomial  system can be found in this later reference).
These systems have the form \eqref{eq:poly.sys.form} with $r=n$, namely  $p_1(\mathbf x)=\dots=p_n(\mathbf x)=0$.
We rely on the method described in Section \ref{subsec:sys.po.eq} and Algorithm \ref{alg:asc} to solve these systems.
Our numerical result are displayed in Table \ref{tab:poly.sys}, with the following notation:
\begin{itemize}
\item $n$: the number of variables and equations.
\item $d$: the maximal degree of the polynomials involved in the equations, i.e., $d=\max_j\deg(p_j)$;
\item $k$: the relaxation order given as input of Algorithm \ref{alg:asc};
\item $t$: the maximal number of iterations performed by Algorithm \ref{alg:asc};
\item $N$: the number of real roots obtained by using Algorithm \ref{alg:asc}.
\end{itemize}

\begin{table}
    \caption{\small Numerical results of squared systems of polynomial equations, described in Section \ref{sec:experiment.poly.sys}, with $L=10^4$.}
    \label{tab:poly.sys}
\scriptsize
\begin{center}
\begin{tabular}{|c|c|c|c|c|c|c|}
        \hline
        \multicolumn{1}{|c|}{}  &\multicolumn{2}{c|}{System size }&\multicolumn{1}{c|}{Order}   &\multicolumn{3}{c|}{SpectralASC: LMBM}\\
\hline
         & $n$ & $d$& $k$ & $t$  &time & $N$  \\
 \hline
katsura6 & 7 &2 &2& 1 & 3.1 & 2 \\
\hline
katsura7 & 8 &2 &2& 2 & 7.9 & 2 \\
\hline
katsura8 & 9 &2 &2& 1 & 4.4 & 2 \\
\hline
katsura9 & 10 &2 &2& 1 &  5.1 & 2 \\
\hline
katsura10 & 11 &2  &2& 1 &  8.7 & 2\\
\hline
stewgou & 9 &2 &2& 2 &  25.2 & 2\\
\hline
pole27sys & 14 &2 &1& 1 &  0.3 & 1\\
\hline
pole28sys & 16 &2 &1& 1 &  0.3 & 1\\
\hline
ku10 & 10 &2 &1& 1 &  0.3 & 1\\
\hline
chemkin & 10 &2 &2& 2 &  105.1 & 1\\
\hline
d1 & 12 &3 &2& 2 &  832.1 & 2\\
\hline
kin1 & 12 &3 &2& 1 &  611.7 & 2\\
\hline
i1 & 10 &3 &3& 1 &  133.4 & 1\\
\hline
\end{tabular}    
\end{center}
\end{table}
The total time required to solve each system by using our ASC algorithm together with the LMBM solver is less than 15 minutes, even for systems involving 16 variables.
This is in deep contrast with the recorded solving times of our original ASC algorithm  \cite[Algorithm 4.3]{mai2019sums}, which  can typically spends up to a hour to solve systems with 10 variables while relying on Mosek.
Because of the above mentioned accuracy issues, we could not use SketchyCGAL as a solver for Algorithm \ref{alg:asc}.
\section{Conclusion}
We have presented a nonsmooth hierarchy of SDP relaxations for optimizing polynomials on varieties contained in a Euclidean sphere.  
The advantage of this hierarchy is to circumvent the hard constraints involved in the standard SDP hierarchy \eqref{eq:moment.hierarchy}  by minimizing the maximal eigenvalue of a matrix pencil.
This in turn boils down to solving an unconstrained convex nonsmooth optimization problem by  LMBM and to computing  largest eigenvalues by means of the modified Lanczos's algorithm.
Our numerical experiments  indicate that solving this nonsmooth hierarchy is more efficient and more robust than solving the classical semidefinite hierarchy by interior-point methods, at least for a class of interesting POPs, including equality constrained QCQPs on the sphere, QCQPs with a single inequality (ball) constraint, and minimization of quartics on the sphere.
Our CTP framework can be further applied for an interesting class of noncommutative polynomial optimization, in particular for eigenvalue maximization problems arising from quantum information theory, where the variables are unitary operators \cite{navascues2008convergent}. 
A topic of future investigation is to handle in a more subtle way the case of POPs involving several inequalities. 
Our current method transforms such a POP into a  CTP-POP by adding a slack variable for each inequality.
One promising workaround would be to exploit the inherent sparse structure of this CTP-POP.
Another similar investigation track would be to exploit the CTP of SDP relaxations resulting from  polynomial optimization problems with sparse input data. 

Eventually, we have tried  to use spectral methods to solve SDP relaxations of QCQPs involving inequalities only, MAXCUT problems and 0/1 linear constrained quadratic problems. 
However, our preliminary experiments for these problems have not been  convincing in terms of  efficiency and accuracy.
In order to improve upon these results, one possible remedy would be to index the moment matrices by alternative Legendre/Chebychev bases, rather than with the standard monomial basis.
\paragraph{\textbf{Acknowledgements}.} 
The first author was supported by the MESRI funding from EDMITT.
The second author was supported by the FMJH Program PGMO (EPICS project) and  EDF, Thales, Orange et Criteo, as well as from the Tremplin ERC Stg Grant ANR-18-ERC2-0004-01 (T-COPS project).
This work has benefited from the Tremplin ERC Stg Grant ANR-18-ERC2-0004-01 (T-COPS project), the European Union's Horizon 2020 research and innovation programme under the Marie Sklodowska-Curie Actions, grant agreement 813211 (POEMA) as well as from the AI Interdisciplinary Institute ANITI funding, through the French ``Investing for the Future PIA3'' program under the Grant agreement n$^{\circ}$ANR-19-PI3A-0004.
The third author was supported by the European Research Council (ERC) under the European's Union Horizon 2020 research and innovation program (grant agreement 666981 TAMING).
\appendix

\section{Appendix}
\label{sec:Appendix}
\subsection{Spectral minimizations of SDP}
In this section, we provide the proofs of lemmas stated in Section \ref{sec:sdp.ctp} and \ref{sec:sdp.btp}.
First we recall the following useful properties of $\mathcal S$ and $\mathcal S^+$:
\begin{itemize}
    \item If $\mathbf X=\diag(\mathbf X_1,\dots,\mathbf X_l)\in \mathcal S$,
\begin{equation}\label{eq:pos.defi.diagon}
   \mathbf X\succeq 0\Longleftrightarrow \mathbf X_j\succeq 0\,,\,j\in [l]\qquad \text{ and }\qquad \trace(\mathbf X)=\sum_{j=1}^l \trace(\mathbf X_j)\,.
\end{equation}
\item If $\mathbf A=\diag(\mathbf A_1,\dots,\mathbf A_l)\in \mathcal S$ and $\mathbf B=\diag(\mathbf B_1,\dots,\mathbf B_l)\in \mathcal S$,
\begin{equation}\label{eq:scalar.prod.dia}
    \left<\mathbf A, \mathbf B\right> = \sum_{j=1}^l\left<\mathbf A_j, \mathbf B_j\right>\,.
\end{equation}
\end{itemize}
\subsubsection{SDP with Constant Trace Property}

\paragraph{Proof of Lemma \ref{lem:obtain.dual.sol}:}
\label{proof:lem:obtain.dual.sol}
\begin{proof}
The proof of \eqref{eq:nonsmooth.hierarchy.0}
is similar in spirit to the one of Helmberg and Rendl in \cite[Section 2]{helmberg2000spectral}.
Here, we extend this proof for SDP \eqref{eq:SDP.form.0}, which involves a block-diagonal positive semidefinite matrix.
From 
\eqref{eq:SDP.form.0},
\begin{equation*}
-\tau = \sup_{\mathbf X\in \mathcal{S}}\{ \left< \mathbf C,\mathbf X\right>\,:\,\mathcal{A} \mathbf X=\mathbf b\,,\,\trace(\mathbf X)=a\,,\, \mathbf X \succeq 0\}\,.
\end{equation*}
The dual of this SDP reads:
\begin{equation*}
-\rho = \inf_{(\mathbf z,\zeta)} \{ \mathbf b^T\mathbf z+a\zeta \,:\,
\mathcal{A}^T \mathbf z+\zeta\mathbf I-\mathbf C\succeq 0 \}\,,
\end{equation*}
where $\mathbf I$ is the identity matrix of size $s$.
From this,
\begin{equation*}
    \begin{array}{rl}
      -\rho&=  \inf _{(\mathbf z,\zeta)} \{ \mathbf b^T\mathbf z+a\zeta \,:\,
\zeta\ge \lambda_1(\mathbf C-\mathcal{A}^T \mathbf z)\} \\
         & =  \inf \{a\lambda_1(\mathbf C-\mathcal{A}^T \mathbf z)+ \mathbf b^T\mathbf z \,:\,\mathbf z\in\R^m \}\,. 
    \end{array}
\end{equation*}
Since $\rho=\tau$, \eqref{eq:nonsmooth.hierarchy.0} follows.
For the second statement, let $\mathbf z^\star$ be an optimal solution of SDP \eqref{eq:SDP.form.dual.0}. 
Then $\mathbf b^T\mathbf z^\star=-\rho=-\tau$. 
In addition, $\mathbf C-\mathcal{A}^T\mathbf z^\star \preceq 0$ implies that 
\[\lambda_1(\mathbf C-\mathcal{A}^T{\mathbf  z^\star})\le 0\,,\]
so that  $\varphi(\mathbf z^\star)\le -\tau$. 
Note that \eqref{eq:nonsmooth.hierarchy.0} indicates that $\varphi(\mathbf z^\star)\ge -\tau$. 
Thus, $\varphi(\mathbf z^\star)=-\tau$, yielding the second statement.
\end{proof}

The following proposition recalls the  differentiability properties of $\varphi$.
\begin{proposition}\label{prop:properties.phi.0}
The function $\varphi$ in \eqref{eq:func.phik.0} has the following properties:
\begin{enumerate}
\item  $\varphi$ is convex and continuous but not differentiable.
\item The subdifferential of $\varphi$ at $\mathbf z$ reads:
\begin{equation}\label{eq:subgrad.phi.fo.0}
    \partial \varphi(\mathbf z)=\{\mathbf b-a\mathcal{A}\mathbf W \ :\ \mathbf W\in \conv(\Gamma(\mathbf C-\mathcal{A}^T\mathbf z))\}\,,
\end{equation}
where for each $\mathbf A\in \mathcal{S}$, 
\begin{equation}\label{eq:Gamma}
    \Gamma(\mathbf A):=\{\mathbf u\mathbf u^T\ :\ \mathbf A\mathbf u=\lambda_1(\mathbf A)\mathbf u\ ,\ \|\mathbf u\|_2=1\}\,.
\end{equation}
\end{enumerate}
\end{proposition}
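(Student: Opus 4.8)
The plan is to derive both parts from the standard convex-analytic description of the largest-eigenvalue map $\lambda_1$ as a supremum of linear functionals, together with the exact subdifferential calculus for such suprema and for composition with an affine map; the block-diagonal structure of $\mathcal S$ plays no essential role and needs only a one-line remark. First I would record the Rayleigh representation: for every $\mathbf z\in\R^m$,
\[
\varphi(\mathbf z)\ =\ \mathbf b^T\mathbf z\ +\ a\max_{\|\mathbf u\|_2=1}\mathbf u^T(\mathbf C-\mathcal A^T\mathbf z)\mathbf u\ =\ \mathbf b^T\mathbf z\ +\ a\max_{\|\mathbf u\|_2=1}\big\langle \mathbf C-\mathcal A^T\mathbf z,\,\mathbf u\mathbf u^T\big\rangle ,
\]
see \eqref{eq:func.phik.0}. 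For each fixed unit vector $\mathbf u$ the map $\mathbf z\mapsto\langle\mathbf C-\mathcal A^T\mathbf z,\mathbf u\mathbf u^T\rangle$ is affine, so $\varphi$ is a pointwise supremum of affine functions over the compact set $\{\|\mathbf u\|_2=1\}$; hence $\varphi$ is convex, and being finite-valued it is continuous — in fact globally Lipschitz, since $\lambda_1$ is $1$-Lipschitz for the Frobenius norm and $\mathbf z\mapsto\mathbf C-\mathcal A^T\mathbf z$ is affine.

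For the subdifferential I would use the chain rule for a convex function composed with an affine map. Writing $\mathbf M(\mathbf z):=\mathbf C-\mathcal A^T\mathbf z$, the linear part of $\mathbf M$ is $-\mathcal A^T$, whose adjoint is $-\mathcal A$ because $\langle\mathcal A^T\mathbf z,\mathbf X\rangle=\mathbf z^T\mathcal A\mathbf X$. Since $\varphi(\mathbf z)=\mathbf b^T\mathbf z+a\,\lambda_1(\mathbf M(\mathbf z))$ with $\lambda_1$ finite everywhere, the calculus rule holds with equality and gives $\partial\varphi(\mathbf z)=\mathbf b-a\,\mathcal A\big(\partial\lambda_1(\mathbf M(\mathbf z))\big)$. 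It then remains to plug in the classical identity $\partial\lambda_1(\mathbf A)=\conv\,\Gamma(\mathbf A)$, i.e. the convex hull of the rank-one projectors $\mathbf u\mathbf u^T$ onto unit $\lambda_1$-eigenvectors of $\mathbf A$ (see \eqref{eq:Gamma}); this is precisely the subdifferential-of-a-supremum theorem applied to the Rayleigh quotient above, and here the convex hull is already closed — no closure operation is needed — because the index set $\{\|\mathbf u\|_2=1\}$ is compact, the integrand is jointly continuous, and the argmax set is closed. Combining the two displays and using linearity of $\mathcal A$ yields $\partial\varphi(\mathbf z)=\{\mathbf b-a\mathcal A\mathbf W:\mathbf W\in\conv\Gamma(\mathbf C-\mathcal A^T\mathbf z)\}$, which is \eqref{eq:subgrad.phi.fo.0}. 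Non-differentiability then follows as a corollary: $\varphi$ is differentiable at $\mathbf z$ exactly when this set is a singleton, which fails as soon as $\lambda_1(\mathbf C-\mathcal A^T\mathbf z)$ is a multiple eigenvalue and $\mathbf u\mapsto\mathcal A(\mathbf u\mathbf u^T)$ is non-constant on the corresponding eigenspace, so $\varphi$ is not differentiable in general.

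Finally I would dispose of the only genuine subtlety: $\mathcal A$ was defined on the space $\mathcal S$ of block-diagonal symmetric matrices, whereas the projectors $\mathbf u\mathbf u^T\in\Gamma(\mathbf C-\mathcal A^T\mathbf z)$ need not be block-diagonal. This is harmless, because each $\mathbf A_i\in\mathcal S$ is block-diagonal, so $\langle\mathbf A_i,\mathbf u\mathbf u^T\rangle$ depends only on the block-diagonal part of $\mathbf u\mathbf u^T$; hence $\mathcal A\mathbf W$ is well defined for every symmetric $\mathbf W$ and consistent with the earlier definition, and equivalently one may always select the top eigenvectors of the block-diagonal matrix $\mathbf C-\mathcal A^T\mathbf z$ block-localised so that $\mathbf u\mathbf u^T\in\mathcal S$. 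The main obstacle here, such as it is, is merely bibliographic: one must invoke the \emph{exact} (equality, not inclusion) versions of the supremum-subdifferential theorem and of the affine chain rule — valid under the mild hypotheses met here — which I would cite from a standard reference rather than reprove.
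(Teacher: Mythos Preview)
Your argument is correct and complete. The paper does not give a proof at all: it simply cites Helmberg--Rendl \cite[Section 2]{helmberg2000spectral} and Overton \cite[(4)]{overton1992large} for both properties, so you have in effect unpacked those references --- the Rayleigh supremum representation, the affine chain rule, and the classical identity $\partial\lambda_1(\mathbf A)=\conv\Gamma(\mathbf A)$ are precisely the ingredients used there. Your additional paragraph on the block-diagonal subtlety (that $\mathbf u\mathbf u^T$ need not lie in $\mathcal S$ but $\mathcal A$ extends harmlessly, or equivalently that eigenvectors may be chosen block-localised) is a nice clarification that the paper omits entirely.
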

\begin{proof}
Properties 1-2 are from Helmberg-Rendl \cite[Section 2]{helmberg2000spectral} (see also \cite[(4)]{overton1992large}). 
\end{proof}
The following result is useful to recover an optimal solution of SDP \eqref{eq:SDP.form.0} from an optimal solution of NSOP \eqref{eq:nonsmooth.hierarchy.0}.
\begin{lemma}\label{lem:extract.sdp.solu}
 If $\mathbf{\bar z}$ is an optimal solution of NSOP \eqref{eq:nonsmooth.hierarchy.0},
then:
\begin{enumerate}
    \item There exists $\mathbf X^\star\in a\conv(\Gamma(\mathbf C-\mathcal{A}^T\mathbf{\bar z}))$ such that  $\mathcal{A}\mathbf X^\star=\mathbf b$.
    \item $\mathbf X^\star=a\sum_{j=1}^r \bar \xi_j\mathbf u_j\mathbf u_j^T$, where $\mathbf u_1,\dots,\mathbf u_r$ are all uniform eigenvectors  corresponding to $\lambda_1(\mathbf C-\mathcal{A}^T\mathbf{\bar z})$ and $(\bar \xi_1,\dots,\bar \xi_r)$ is an optimal solution of QP \eqref{eq:socp.sdp.sol}.
    \item $\mathbf X^\star$ is an optimal solution of SDP \eqref{eq:SDP.form.0}.
\end{enumerate}
 
\end{lemma}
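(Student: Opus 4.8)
The plan is to read off all three assertions from Fermat's rule for the convex nonsmooth function $\varphi$ of \eqref{eq:func.phik.0} together with the explicit formula for $\partial\varphi$ in Proposition \ref{prop:properties.phi.0}. Throughout set $\mathbf A:=\mathbf C-\mathcal{A}^T\mathbf{\bar z}$ and $\lambda_1:=\lambda_1(\mathbf A)$. For assertion 1: since $\varphi$ is convex (Proposition \ref{prop:properties.phi.0}) and $\mathbf{\bar z}$ is a global minimizer of $\varphi$ over $\R^m$, we have $\mathbf 0\in\partial\varphi(\mathbf{\bar z})$, so by \eqref{eq:subgrad.phi.fo.0} there is $\mathbf W\in\conv(\Gamma(\mathbf A))$ with $\mathbf b-a\mathcal{A}\mathbf W=\mathbf 0$. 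Putting $\mathbf X^\star:=a\mathbf W$ gives $\mathbf X^\star\in a\,\conv(\Gamma(\mathbf A))$ and $\mathcal{A}\mathbf X^\star=\mathbf b$, which is assertion 1.

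For assertion 2, let $E_1$ denote the $\lambda_1$-eigenspace of $\mathbf A$. Every $\mathbf u\mathbf u^T\in\Gamma(\mathbf A)$ has range in $E_1$, hence so does $\mathbf W$, which is in addition positive semidefinite with $\trace\mathbf W=1$. A spectral decomposition of $\mathbf W$ (equivalently, Carathéodory's theorem in the finite-dimensional space of symmetric matrices) exhibits unit $\lambda_1$-eigenvectors $\mathbf u_1,\dots,\mathbf u_r$ and scalars $\bar\xi_j\ge 0$ with $\sum_{j=1}^r\bar\xi_j=1$ such that $\mathbf W=\sum_{j=1}^r\bar\xi_j\mathbf u_j\mathbf u_j^T$, i.e. $\mathbf X^\star=a\sum_{j=1}^r\bar\xi_j\mathbf u_j\mathbf u_j^T$. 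Since $\mathcal{A}\mathbf X^\star=\mathbf b$ by assertion 1, the objective of QP \eqref{eq:socp.sdp.sol} vanishes at $\bar\xi$; being everywhere nonnegative and $\bar\xi$ feasible, $\bar\xi$ is optimal for \eqref{eq:socp.sdp.sol}. Conversely, once the computed family $\mathbf u_1,\dots,\mathbf u_r$ of unit $\lambda_1$-eigenvectors is rich enough that the QP attains value $0$, any optimal $\bar\xi$ yields an $\mathbf X^\star$ with the stated properties; compatibility with the block-diagonal structure of $\mathcal S$ is ensured by passing, if needed, to the block-supported eigenvectors as in the paragraph on eigenvalue computation.

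For assertion 3: by assertion 2, $\mathbf X^\star$ is a nonnegative combination of rank-one positive semidefinite matrices, so $\mathbf X^\star\succeq 0$, and $\mathcal{A}\mathbf X^\star=\mathbf b$; thus $\mathbf X^\star$ is feasible for \eqref{eq:SDP.form.0}. Writing $\mathbf C=\mathbf A+\mathcal{A}^T\mathbf{\bar z}$ and using adjointness of $\mathcal{A},\mathcal{A}^T$ together with $\mathbf A\mathbf u_j=\lambda_1\mathbf u_j$, $\|\mathbf u_j\|_2=1$ and $\sum_j\bar\xi_j=1$,
\begin{align*}
\left<\mathbf C,\mathbf X^\star\right>&=\left<\mathbf A,\mathbf X^\star\right>+\left<\mathcal{A}^T\mathbf{\bar z},\mathbf X^\star\right>=a\sum_{j=1}^r\bar\xi_j\,\mathbf u_j^T\mathbf A\mathbf u_j+\mathbf{\bar z}^T(\mathcal{A}\mathbf X^\star)\\
&=a\lambda_1+\mathbf b^T\mathbf{\bar z}=\varphi(\mathbf{\bar z})\,.
\end{align*}
By Lemma \ref{lem:obtain.dual.sol}, $\varphi(\mathbf{\bar z})=\inf_{\mathbf z}\varphi(\mathbf z)=-\tau$, the optimal value of \eqref{eq:SDP.form.0}. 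Hence $\mathbf X^\star$ is a feasible point of \eqref{eq:SDP.form.0} attaining its optimal value, so it is an optimal solution.

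The step I expect to be the main obstacle is assertion 2: reconciling the abstract convex-combination guarantee coming from $\mathbf 0\in\partial\varphi(\mathbf{\bar z})$ with the concrete finite family $\mathbf u_1,\dots,\mathbf u_r$ of (unit, block-supported) $\lambda_1$-eigenvectors actually returned by the eigenvalue routine, and verifying that QP \eqref{eq:socp.sdp.sol} really attains value $0$ over the simplex they span — this is where the inclusion $\Ima\mathbf W\subseteq E_1$, the spectral/Carathéodory decomposition, and compatibility with the block-diagonal form of $\mathcal S$ all enter. The remaining manipulations are routine convex analysis and adjoint bookkeeping.
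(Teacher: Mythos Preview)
Your proof is correct and follows essentially the same route as the paper: Fermat's rule $\mathbf 0\in\partial\varphi(\mathbf{\bar z})$ combined with the subdifferential formula \eqref{eq:subgrad.phi.fo.0} for assertion 1, and the same chain of equalities $\left<\mathbf C,\mathbf X^\star\right>=a\lambda_1+\mathbf b^T\mathbf{\bar z}=\varphi(\mathbf{\bar z})=-\tau$ for assertion 3. The paper dispatches assertion 2 in a single sentence (``implied by the first one''), whereas you spell out the spectral/Carath\'eodory decomposition and the argument that the QP \eqref{eq:socp.sdp.sol} attains value $0$; your extra care here is justified and fills what is arguably a gap in the paper's presentation, but it is elaboration rather than a different approach.
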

\begin{proof}
By \cite[Theorem 4.2]{bagirov2014introduction}, $\mathbf 0\in \partial \varphi(\mathbf{\bar z})$.
Combining this with Proposition \ref{prop:properties.phi.0}.2, 
the first statement follows, which in turn implies  the second statement.
We next prove the third statement.
Since $\mathbf X^\star=a\sum_{j=1}^r \bar \xi_j\mathbf u_j\mathbf u_j^T$ with $\xi_j\ge 0$, $j\in[r]$, one has $\mathbf X^\star\succeq 0$. 
From this and since $\mathcal{A}\mathbf X^\star=\mathbf b$, 
$\mathbf X^\star$ is a feasible solution of SDP \eqref{eq:SDP.form.0}. 
Moreover,
\[\begin{array}{rl}

    \left< \mathbf C, \mathbf X^\star\right>&=\left< \mathbf C-\mathcal{A}^T\mathbf{\bar z}, \mathbf X^\star\right>+\left< \mathcal{A}^T\mathbf{\bar z}, \mathbf X^\star\right>\\
    &=a\sum_{j=1}^r \bar \xi_j\left< \mathbf C-\mathcal{A}^T\mathbf{\bar z},\mathbf u_j\mathbf u_j^T\right>+\mathbf{\bar z}^T( \mathcal{A} \mathbf X^\star)\\
    &=a\sum_{j=1}^r \bar \xi_j\mathbf u_j^T( \mathbf C-\mathcal{A}^T\mathbf{\bar z})\mathbf u_j+\mathbf{\bar z}^T\mathbf b\\
    &=a\lambda_1(\mathbf C-\mathcal{A}^T\mathbf{\bar z})\sum_{j=1}^r \bar \xi_j\|\mathbf u_j\|_2^2+\mathbf{\bar z}^T\mathbf b\\
    &=a\lambda_1(\mathbf C-\mathcal{A}^T\mathbf{\bar z})\sum_{j=1}^r \bar \xi_j+\mathbf{\bar z}^T\mathbf b\\
    &=a\lambda_1(\mathbf C-\mathcal{A}^T\mathbf{\bar z})+\mathbf{\bar z}^T\mathbf b=\varphi(\mathbf{\bar z})=-\tau\,.
\end{array}\]
Thus, $\left< \mathbf C, \mathbf X^\star\right>=-\tau$, yielding the third statement.
\end{proof}
To obtain a convergence guarantee when solving NSOP \eqref{eq:nonsmooth.hierarchy.0} by LMBM \cite[Algorithm 1]{haarala2007globally}, we need the following technical lemma:
\begin{lemma}\label{lem:LMBM.conver.guara}
When applied to problem NSOP
  \eqref{eq:nonsmooth.hierarchy.0}, the LMBM
 algorithm is globally convergent.

\end{lemma}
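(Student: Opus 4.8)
The plan is to show that the objective $\varphi$ of \eqref{eq:nonsmooth.hierarchy.0} meets the standing assumptions of the global convergence theorem for LMBM in \cite{haarala2007globally,haarala2004new}: local Lipschitz continuity, weak upper semismoothness, boundedness from below, and boundedness of sublevel sets (the last of these only after a harmless reduction). Most of this is immediate from the convexity of $\varphi$ and from results already in the paper; the only genuinely nontrivial ingredient — and the step I expect to be the main obstacle — concerns the sublevel sets, which are not bounded as they stand because the constant trace property introduces a lineality subspace for $\varphi$.

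For the regularity, I would first note that $\varphi$ in \eqref{eq:func.phik.0} is finite and convex on all of $\R^m$: the map $\mathbf z\mapsto\lambda_1(\mathbf C-\mathcal{A}^T\mathbf z)$ is a pointwise supremum of affine functions, through $\lambda_1(\mathbf A)=\sup_{\|\mathbf u\|_2=1}\mathbf u^T\mathbf A\mathbf u$, precomposed with an affine map, hence convex, and $\varphi$ only adds the linear term $\mathbf b^T\mathbf z$ (this is Proposition \ref{prop:properties.phi.0}). A finite convex function on $\R^m$ is automatically continuous and locally Lipschitz, and every convex function is weakly upper semismooth, so the smoothness-type hypotheses of LMBM hold, with the Clarke subdifferential given explicitly by Proposition \ref{prop:properties.phi.0}.2. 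Combining Lemma \ref{lem:obtain.dual.sol} with condition 1 of Assumption \ref{ass:general.assump.sdp} gives $\inf_{\mathbf z}\varphi(\mathbf z)=-\tau\in\R$, so $\varphi$ is bounded below and the value sequence produced by the algorithm cannot diverge to $-\infty$.

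For the sublevel sets I would proceed as follows. The constant trace property (condition 4) forces $\mathbf I\in\Ima(\mathcal{A}^T)$, so there exist $\mathbf d\ne0$ and $c\in\R$ with $\mathcal{A}^T\mathbf d=c\mathbf I$ and $ac=\mathbf b^T\mathbf d$, and then $\varphi(\mathbf z+t\mathbf d)=a\bigl(\lambda_1(\mathbf C-\mathcal{A}^T\mathbf z)-tc\bigr)+\mathbf b^T\mathbf z+tac=\varphi(\mathbf z)$ for all $t$; hence $\varphi$ is constant along a nonzero subspace $N$ and every sublevel set is unbounded. To repair this, decompose $\R^m=N\oplus N^\perp$ and work with the restriction $\tilde\varphi$ of $\varphi$ to $N^\perp$. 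The key remark is that every subgradient $\mathbf b-a\mathcal{A}\mathbf W$, with $\mathbf W\in\conv(\Gamma(\mathbf C-\mathcal{A}^T\mathbf z))$, is orthogonal to $N$ because $\trace\mathbf W=1$; consequently the LMBM iterates never move in the $N$-direction, and a run of LMBM on \eqref{eq:nonsmooth.hierarchy.0} is, coordinate-wise, a run of LMBM on $\min\{\tilde\varphi(\mathbf z):\mathbf z\in N^\perp\}$. It then remains to check that the convex, bounded-below function $\tilde\varphi$ has bounded sublevel sets, i.e. that there is no nonzero $\mathbf d\in N^\perp$ with recession value $\varphi_\infty(\mathbf d)=a\lambda_1(-\mathcal{A}^T\mathbf d)+\mathbf b^T\mathbf d\le0$. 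Since $\inf\varphi>-\infty$, only the flat case $\varphi_\infty(\mathbf d)=0$ needs to be excluded; I would attempt this by pairing such a $\mathbf d$, for which $\mathcal{A}^T\mathbf d-(\mathbf b^T\mathbf d/a)\mathbf I\succeq0$, against an optimal primal solution $\mathbf X^\star$ of \eqref{eq:SDP.form.0} (which exists since conditions 1 and 4 of Assumption \ref{ass:general.assump.sdp} imply condition 2), obtaining $\langle\mathcal{A}^T\mathbf d-(\mathbf b^T\mathbf d/a)\mathbf I,\mathbf X^\star\rangle=0$ and hence, via complementarity on the minimal face of $\mathcal{S}^+$ containing $\mathbf X^\star$, that $\mathcal{A}^T\mathbf d$ acts as a scalar on $\Ima\mathbf X^\star$; closing the gap to $\mathbf d\in N$ is the delicate point and may need the extra structure available in the applications (e.g. dual attainability, condition 3, or genericity of the underlying moment problem).

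Granting bounded sublevel sets for $\tilde\varphi$, the global convergence theorem of \cite{haarala2007globally,haarala2004new} applies to the reduced problem: $(\varphi(\mathbf z_k))_k$ is nonincreasing with $\varphi(\mathbf z_k)\to-\tau=\inf\varphi$, and every accumulation point of $(\mathbf z_k)_k$ is Clarke-stationary for $\varphi$, hence a global minimizer by convexity. That is precisely the claimed global convergence of LMBM on \eqref{eq:nonsmooth.hierarchy.0}, everything beyond the sublevel-set verification being routine convex analysis together with the subdifferential formula of Proposition \ref{prop:properties.phi.0}.
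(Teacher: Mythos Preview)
Your proposal has a genuine gap: the bounded-sublevel-set argument is incomplete, as you yourself concede (``closing the gap to $\mathbf d\in N$ is the delicate point and may need the extra structure''). The lineality subspace $N$ you identify is real, and there is no general reason the restriction $\tilde\varphi$ to $N^\perp$ should have bounded sublevel sets either; your complementarity argument only shows $\mathcal{A}^T\mathbf d$ acts as a scalar on $\Ima\mathbf X^\star$, which is far from forcing $\mathbf d\in N$ without further rank information that is simply not available under conditions 1 and 4 of Assumption \ref{ass:general.assump.sdp} alone. Your reduction of the LMBM run to $N^\perp$ is also not obviously watertight, since LMBM builds search directions from quasi-Newton updates and aggregate subgradients rather than raw subgradients alone.

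The paper sidesteps all of this by invoking a convergence result that does not require bounded sublevel sets. Its proof is three lines: convexity of $\varphi$ gives weak upper semismoothness on $\R^m$ by \cite[Proposition~5]{mifflin1977algorithm}; weak upper semismoothness gives upper semidifferentiability by \cite[Theorem~3.1]{bihain1984optimization}; and upper semidifferentiability together with boundedness from below (which you already established via Lemma \ref{lem:obtain.dual.sol}) yields global convergence of LMBM by \cite[Section~5]{bihain1984optimization} (cf.\ also the closing statement of \cite[Section~14.2]{bagirov2014introduction}). So the first half of your argument --- convexity, local Lipschitz continuity, weak upper semismoothness, $\inf\varphi=-\tau\in\R$ --- is exactly what the paper uses, but your entire detour through sublevel sets and the lineality decomposition is unnecessary once you cite the right convergence theorem.
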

\begin{proof}
The convexity of $\varphi$ yields that  $\varphi$ is weakly upper semismooth on $\R^m$ according to \cite[Proposition 5]{mifflin1977algorithm}. 
From this, $\varphi$ is upper semidifferentiable on $\R^m$  by using \cite[Theorem 3.1]{bihain1984optimization}.
Combining this with the fact that $\varphi$ is bounded from below on $\R^m$, the result follows thanks to  \cite[Section 5]{bihain1984optimization} (see also the final statement of \cite[Section 14.2]{bagirov2014introduction}).
\end{proof}

\subsubsection{SDP with Bounded Trace Property}
 \paragraph{Proof of Lemma \ref{lem:obtain.dual.sol.btp}:}
 \label{proof:lem:obtain.dual.sol.btp}
\begin{proof}
Let $\mathbf X^\star$ be an optimal solution of SDP \eqref{eq:SDP.form.0} and set $\ushort a:=\trace(\mathbf X^\star)$.
By Condition 5 of Assumption \ref{ass:general.assump.sdp}, one has
\begin{equation}\label{eq:ineq.a.bar.a}
    a \ge \ushort a> 0\,.
\end{equation}
Similarly to the proof of  Lemma \ref{lem:obtain.dual.sol}, one obtains:
\begin{equation}\label{eq:spectral.form.of.trace}
      -\tau=  \inf \{\ushort a\lambda_1(\mathbf C-\mathcal{A}^T \mathbf z)+ \mathbf b^T\mathbf z \,:\,\mathbf z\in\R^m \}\,. 
\end{equation}
Let us prove that 
\begin{equation}\label{eq:lower.boun.psi}
    \psi(\mathbf z)\ge -\tau\,,\,\forall \mathbf{z}\in \R^m\,.
\end{equation}
Let $\mathbf{z}\in \R^m$ be fixed and consider the following two cases:
\begin{itemize}
    \item Case 1: $\lambda_1(\mathbf C-\mathcal{A}^T\mathbf z)>0$. By \eqref{eq:ineq.a.bar.a} and \eqref{eq:spectral.form.of.trace}, 
    \[\psi(\mathbf z)= a \lambda_1(\mathbf C-\mathcal{A}^T\mathbf z) +\mathbf b^T \mathbf z\ge \ushort a\lambda_1(\mathbf C-\mathcal{A}^T \mathbf z)+ \mathbf b^T\mathbf z\ge -\tau\,. \]
    Thus, $\psi(\mathbf z)\ge -\tau$.
    \item Case 2: $\lambda_1(\mathbf C-\mathcal{A}^T\mathbf z)\le 0$. Then $\mathcal{A}^T\mathbf z-\mathbf C\succeq 0$ and $\psi(\mathbf z)=\mathbf b^T\mathbf z\ge -\rho=-\tau$ by \eqref{eq:SDP.form.dual.0}.
\end{itemize}
Let $(\mathbf z^{(j)})_{j\in\N}$ be a minimizing sequence of SDP \eqref{eq:SDP.form.dual.0}.
Then $\lambda_1(\mathbf C-\mathcal{A}^T\mathbf z^{(j)})\le 0$, $j\in\N$, since $\mathcal{A}^T\mathbf z^{(j)}-\mathbf C\succeq 0$ and $\mathbf b^T\mathbf z^{(j)}\to -\tau$ as $j\to \infty$ since $\tau=\rho$. 
It implies that $\psi(\mathbf z^{(j)})=\mathbf b^T\mathbf z^{(j)}\to -\tau$ as $j\to \infty$. 
From this and by \eqref{eq:lower.boun.psi}, the first statement follows.

For the second statement, let $\mathbf z^\star$ be an optimal solution of SDP \eqref{eq:SDP.form.dual.0}. 
Since $\mathcal{A}^T\mathbf z-\mathbf C\succeq 0$, $\lambda_1(\mathbf C-\mathcal{A}^T\mathbf z)\le 0$ and thus $\psi(\mathbf z^\star)=\mathbf b^T\mathbf z^\star= -\rho=-\tau$. 
Thus, $\mathbf z^\star$ is an optimal solution of \eqref{eq:nonsmooth.hierarchy.0.btp}, yielding the  second statement.
\end{proof}

We consider the differentiability properties of $\psi$ in the following proposition:
\begin{proposition}\label{prop:properties.phi.0.btp}
The function $\psi$ has the following properties:
\begin{enumerate}
\item  $\psi$ is convex and continuous but not differentiable.
\item The subdifferential of $\psi$ at $\mathbf z$ reads:
\begin{equation*}\label{eq:subgrad.phi.fo.0.btp}
    \partial \psi(\mathbf z)=\begin{cases}
    \{\mathbf b\}\text{ if }\lambda_1(\mathbf C-\mathcal{A}^T\mathbf z)< 0 \,,\\
    \{\mathbf b-a\mathcal{A}\mathbf W \ :\ \mathbf W\in \conv(\Gamma(\mathbf C-\mathcal{A}^T\mathbf z))\}\text{ if }\lambda_1(\mathbf C-\mathcal{A}^T\mathbf z)> 0\,,\\
    \{\mathbf b-\zeta a\mathcal{A}\mathbf W \ :\ \zeta\in [0,1]\,,\,\mathbf W\in \conv(\Gamma(\mathbf C-\mathcal{A}^T\mathbf z))\}\text{ otherwise}\,,\\
    \end{cases}
\end{equation*}
where $\Gamma(.)$ is defined as in \eqref{eq:Gamma}.
\end{enumerate}
\end{proposition}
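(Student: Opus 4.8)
The plan is to avoid a direct chain-rule computation through $t\mapsto\max\{t,0\}$ and instead exploit the elementary identity
\[
\psi(\mathbf z)\;=\;a\max\{\lambda_1(\mathbf C-\mathcal{A}^T\mathbf z),0\}+\mathbf b^T\mathbf z\;=\;\max\{\,\varphi(\mathbf z),\ \mathbf b^T\mathbf z\,\}\,,
\]
where $\varphi$ is the function from \eqref{eq:func.phik.0}, using $a\max\{t,0\}+s=\max\{at+s,s\}$ for $a>0$. From this, $\psi$ is the pointwise maximum of two finite convex functions on $\R^m$ (convexity of $\varphi$ is Proposition \ref{prop:properties.phi.0}.1, and $\mathbf z\mapsto\mathbf b^T\mathbf z$ is linear), hence convex; and a finite convex function on $\R^m$ is continuous. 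Non-differentiability is inherited from $\varphi$ and from the kink of $t\mapsto\max\{t,0\}$ along the crossing locus $\{\mathbf z:\lambda_1(\mathbf C-\mathcal{A}^T\mathbf z)=0\}$, exactly as in Proposition \ref{prop:properties.phi.0}.1. This gives Property 1.

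For Property 2 the plan is to apply the subdifferential rule for the pointwise maximum of finitely many convex functions (see, e.g., \cite{bagirov2014introduction}) with $f_1:=\varphi$ and $f_2(\mathbf z):=\mathbf b^T\mathbf z$, observing that $f_1(\mathbf z)>f_2(\mathbf z)\iff\lambda_1(\mathbf C-\mathcal{A}^T\mathbf z)>0$ since $a>0$. This yields
\[
\partial\psi(\mathbf z)=
\begin{cases}
\partial f_2(\mathbf z)=\{\mathbf b\} & \text{if }\lambda_1(\mathbf C-\mathcal{A}^T\mathbf z)<0\,,\\
\partial f_1(\mathbf z)=\partial\varphi(\mathbf z) & \text{if }\lambda_1(\mathbf C-\mathcal{A}^T\mathbf z)>0\,,\\
\conv\big(\partial\varphi(\mathbf z)\cup\{\mathbf b\}\big) & \text{otherwise}\,.
\end{cases}
\]
Substituting $\partial\varphi(\mathbf z)=\{\mathbf b-a\mathcal{A}\mathbf W:\mathbf W\in\conv(\Gamma(\mathbf C-\mathcal{A}^T\mathbf z))\}$ from \eqref{eq:subgrad.phi.fo.0}, the first two branches are already the claimed expressions.

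It then remains to simplify the third branch. A generic element of $\conv\big(\partial\varphi(\mathbf z)\cup\{\mathbf b\}\big)$ is $\sum_i\theta_i(\mathbf b-a\mathcal{A}\mathbf W_i)+\theta_0\mathbf b$ with $\theta_i\ge 0$, $\sum_{i\ge 0}\theta_i=1$ and $\mathbf W_i\in\conv(\Gamma(\mathbf C-\mathcal{A}^T\mathbf z))$; this equals $\mathbf b-a\mathcal{A}\big(\sum_{i\ge 1}\theta_i\mathbf W_i\big)$, and writing $\zeta:=\sum_{i\ge 1}\theta_i\in[0,1]$ and, when $\zeta>0$, $\mathbf W:=\zeta^{-1}\sum_{i\ge 1}\theta_i\mathbf W_i\in\conv(\Gamma(\mathbf C-\mathcal{A}^T\mathbf z))$, it takes the form $\mathbf b-\zeta a\mathcal{A}\mathbf W$ (the case $\zeta=0$ gives $\mathbf b$, also of this form). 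Conversely, any $\mathbf b-\zeta a\mathcal{A}\mathbf W$ with $\zeta\in[0,1]$ is the convex combination $\zeta(\mathbf b-a\mathcal{A}\mathbf W)+(1-\zeta)\mathbf b$, hence lies in $\conv\big(\partial\varphi(\mathbf z)\cup\{\mathbf b\}\big)$. This identifies the third branch with $\{\mathbf b-\zeta a\mathcal{A}\mathbf W:\zeta\in[0,1],\mathbf W\in\conv(\Gamma(\mathbf C-\mathcal{A}^T\mathbf z))\}$, completing the proof.

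The main obstacle will not be conceptual but a matter of careful bookkeeping: keeping consistent the max rule, the form \eqref{eq:subgrad.phi.fo.0} of $\partial\varphi$, and the convex-hull reduction above, and in particular verifying that the set $\{\mathbf b-\zeta a\mathcal{A}\mathbf W:\zeta\in[0,1],\mathbf W\in\conv(\Gamma(\cdot))\}$ is genuinely convex (equivalently $\conv(\Gamma(\cdot)\cup\{0\})=\{\zeta\mathbf W:\zeta\in[0,1],\mathbf W\in\conv(\Gamma(\cdot))\}$), so that the stated expression is a legitimate subdifferential and matches the ``otherwise'' case on the nose.
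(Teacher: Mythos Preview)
Your proposal is correct and follows essentially the same approach as the paper: both write $\psi$ as the pointwise maximum of $\varphi$ and the linear function $\mathbf z\mapsto\mathbf b^T\mathbf z$, then invoke the standard subdifferential rule for a finite maximum of convex functions and plug in $\partial\varphi$ from \eqref{eq:subgrad.phi.fo.0}. Your write-up is in fact slightly more complete than the paper's, which stops after stating the max rule and the two component subdifferentials with ``the result follows,'' whereas you carry out the convex-hull simplification in the ``otherwise'' branch explicitly.
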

\begin{proof}
Note that $\psi$ is the maximum of two convex functions, i.e., 
\[\psi(\mathbf z)=\max\{\varphi_1(\mathbf z),\varphi_2(\mathbf z)\} \,,\] 
with $\varphi_1(\mathbf z)=a\lambda_1(\mathbf C-\mathcal{A}^T\mathbf z)+\mathbf b^T\mathbf z$ and $\varphi_2(\mathbf z)=\mathbf b^T\mathbf z$. 
Thus, $\psi$ is convex and 
\[\partial \psi(\mathbf z)= \begin{cases}
\partial \varphi_1(\mathbf z) &\text{ if }\varphi_1(\mathbf z)>\varphi_2(\mathbf z)\,,\\
\conv(\partial \varphi_1(\mathbf z)\cup \partial\varphi_2(\mathbf z))&\text{ if }\varphi_1(\mathbf z)=\varphi_2(\mathbf z)\,,\\
\partial \varphi_2(\mathbf z) &\text{ otherwise}\,.
\end{cases}
\]
Note that $\partial \varphi_2(\mathbf z)=\{\mathbf b\}$ and $\partial \varphi_1(\mathbf z)$ is computed as in formula \eqref{eq:subgrad.phi.fo.0}.
Thus, the result follows.
\end{proof}
The following theorem is useful to recover an optimal solution of SDP \eqref{eq:SDP.form.0} from an optimal solution of NSOP \eqref{eq:nonsmooth.hierarchy.0.btp}.
\begin{lemma}\label{lem:extract.sdp.solu.btp}
Assume that $\mathbf{\bar z}$ is an optimal solution of NSOP \eqref{eq:nonsmooth.hierarchy.0.btp}. 
The following statements are true:
\begin{enumerate}
    \item There exists 
    \[\mathbf X^\star\begin{cases}
    = \mathbf 0& \text{ if } \lambda_1(\mathbf C-\mathcal{A}^T\mathbf{\bar z})< 0\,,\\
    \in\zeta a\conv(\Gamma(\mathbf C-\mathcal{A}^T\mathbf{\bar z})) &\text{ if } \lambda_1(\mathbf C-\mathcal{A}^T\mathbf{\bar z})=0\,,\\
    \in a\conv(\Gamma(\mathbf C-\mathcal{A}^T\mathbf{\bar z}))& \text{ otherwise}\,,
    \end{cases}\]
    for some $\zeta\in[0,1]$ such that $\mathcal{A}\mathbf X^\star=\mathbf b$.
    \item $\mathbf X^\star=\sum_{j=1}^r \bar \xi_j\mathbf u_j\mathbf u_j^T$ where $\mathbf u_1,\dots,\mathbf u_r$ are all uniform eigenvectors  corresponding to $\lambda_1(\mathbf C-\mathcal{A}^T\mathbf{\bar z})$ and $(\bar \xi_1,\dots,\bar \xi_r)$ is an optimal solution of QP \eqref{eq:socp.sdp.sol.btp}. 
    \item $\mathbf X^\star$ is an optimal solution of SDP \eqref{eq:SDP.form.0}.
\end{enumerate}
\end{lemma}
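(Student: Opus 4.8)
The plan is to imitate the proof of Lemma~\ref{lem:extract.sdp.solu}, splitting into the three regimes governed by the sign of $\lambda_1(\mathbf C-\mathcal{A}^T\mathbf{\bar z})$ that already appear both in Proposition~\ref{prop:properties.phi.0.btp} and in the constraints of QP~\eqref{eq:socp.sdp.sol.btp}. Since $\mathbf{\bar z}$ is a global minimizer of the convex function $\psi$ on $\R^m$, the first-order optimality condition \cite[Theorem~4.2]{bagirov2014introduction} gives $\mathbf 0\in\partial\psi(\mathbf{\bar z})$. Feeding this into the formula for $\partial\psi$ in Proposition~\ref{prop:properties.phi.0.btp}.2 proves statement~1 directly, with $\Gamma(\cdot)$ as in \eqref{eq:Gamma}: when $\lambda_1(\mathbf C-\mathcal{A}^T\mathbf{\bar z})<0$ we get $\partial\psi(\mathbf{\bar z})=\{\mathbf b\}$, hence $\mathbf b=\mathbf 0$ and $\mathbf X^\star:=\mathbf 0$ satisfies $\mathcal{A}\mathbf X^\star=\mathbf b$; when $\lambda_1(\mathbf C-\mathcal{A}^T\mathbf{\bar z})>0$ we get $\mathbf b=a\,\mathcal{A}\mathbf W$ for some $\mathbf W\in\conv(\Gamma(\mathbf C-\mathcal{A}^T\mathbf{\bar z}))$, so $\mathbf X^\star:=a\mathbf W\in a\,\conv(\Gamma(\mathbf C-\mathcal{A}^T\mathbf{\bar z}))$ is $\mathcal{A}$-feasible; and when $\lambda_1(\mathbf C-\mathcal{A}^T\mathbf{\bar z})=0$ we get $\mathbf b=\zeta a\,\mathcal{A}\mathbf W$ for some $\zeta\in[0,1]$, giving $\mathbf X^\star:=\zeta a\mathbf W\in\zeta a\,\conv(\Gamma(\mathbf C-\mathcal{A}^T\mathbf{\bar z}))$.

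For statement~2 I would observe that in each case the matrix $\mathbf X^\star$ produced above is $\trace(\mathbf X^\star)$ times a positive semidefinite matrix of trace one supported on the $\lambda_1$-eigenspace of $\mathbf C-\mathcal{A}^T\mathbf{\bar z}$. Diagonalizing it in an orthonormal eigenbasis $\mathbf u_1,\dots,\mathbf u_r$ of that eigenspace writes $\mathbf X^\star=\sum_{j=1}^r\xi_j\mathbf u_j\mathbf u_j^T$ with $\xi_j\ge 0$ and $\sum_{j=1}^r\xi_j=\trace(\mathbf X^\star)$, which is $0$, some number in $[0,a]$, or $a$ according to the three cases; thus $(\xi_j)_j$ is feasible for QP~\eqref{eq:socp.sdp.sol.btp} and attains objective value $\frac{1}{2}\|\mathbf b-\mathcal{A}\mathbf X^\star\|_2^2=0$. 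Since the objective of \eqref{eq:socp.sdp.sol.btp} is nonnegative, every optimal solution $(\bar\xi_1,\dots,\bar\xi_r)$ also attains $0$, so $\mathcal{A}\big(\sum_{j=1}^r\bar\xi_j\mathbf u_j\mathbf u_j^T\big)=\mathbf b$ and we may take $\mathbf X^\star:=\sum_{j=1}^r\bar\xi_j\mathbf u_j\mathbf u_j^T$, which still satisfies statement~1.

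Statement~3 then follows as in Lemma~\ref{lem:extract.sdp.solu}: $\mathbf X^\star=\sum_{j=1}^r\bar\xi_j\mathbf u_j\mathbf u_j^T\succeq 0$ together with $\mathcal{A}\mathbf X^\star=\mathbf b$ makes $\mathbf X^\star$ feasible for \eqref{eq:SDP.form.0}, and using $\left<\mathcal{A}^T\mathbf{\bar z},\mathbf X^\star\right>=\mathbf{\bar z}^T\mathcal{A}\mathbf X^\star$ together with $(\mathbf C-\mathcal{A}^T\mathbf{\bar z})\mathbf u_j=\lambda_1(\mathbf C-\mathcal{A}^T\mathbf{\bar z})\mathbf u_j$ and $\|\mathbf u_j\|_2=1$,
\[
\left<\mathbf C,\mathbf X^\star\right>
=\left<\mathbf C-\mathcal{A}^T\mathbf{\bar z},\mathbf X^\star\right>+\mathbf{\bar z}^T\mathcal{A}\mathbf X^\star
=\lambda_1(\mathbf C-\mathcal{A}^T\mathbf{\bar z})\sum_{j=1}^r\bar\xi_j+\mathbf{\bar z}^T\mathbf b\,.
\]
In each of the three cases the right-hand side collapses to $a\max\{\lambda_1(\mathbf C-\mathcal{A}^T\mathbf{\bar z}),0\}+\mathbf{\bar z}^T\mathbf b=\psi(\mathbf{\bar z})=-\tau$ (using $\sum_j\bar\xi_j=0$ when $\lambda_1<0$; the eigenvalue factor being $0$ when $\lambda_1=0$; and $\sum_j\bar\xi_j=a$ when $\lambda_1>0$), so by Lemma~\ref{lem:obtain.dual.sol.btp} the feasible point $\mathbf X^\star$ attains the optimal value of \eqref{eq:SDP.form.0}.

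The only genuinely delicate point is the bookkeeping across the three regimes: one must match the trace normalization built into QP~\eqref{eq:socp.sdp.sol.btp} — note that, unlike \eqref{eq:socp.sdp.sol}, there is no factor $a$ inside the norm and correspondingly $\mathbf X^\star=\sum_{j=1}^r\bar\xi_j\mathbf u_j\mathbf u_j^T$ carries no extra $a$ — and one must treat the degenerate case $\lambda_1(\mathbf C-\mathcal{A}^T\mathbf{\bar z})=0$, where $\zeta$ only ranges over $[0,1]$ so that merely $\sum_j\bar\xi_j\le a$ is available and the objective coefficient of $\sum_j\bar\xi_j$ vanishes. All the analytic ingredients ($\mathbf 0\in\partial\psi(\mathbf{\bar z})$ and the subdifferential formula of Proposition~\ref{prop:properties.phi.0.btp}) are already in hand, so what remains is case-splitting and normalization rather than any new estimate.
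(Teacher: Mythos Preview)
Your proposal is correct and follows essentially the same route as the paper's proof: invoke $\mathbf 0\in\partial\psi(\mathbf{\bar z})$ via \cite[Theorem~4.2]{bagirov2014introduction}, read off statement~1 from the subdifferential formula of Proposition~\ref{prop:properties.phi.0.btp}.2, deduce statement~2, and then verify $\langle\mathbf C,\mathbf X^\star\rangle=-\tau$ by the same chain of equalities. Your write-up is in fact more explicit than the paper's (which dispatches statements~1 and~2 in two sentences), particularly in spelling out the three sign cases and in arguing why the QP~\eqref{eq:socp.sdp.sol.btp} attains objective value~$0$; the remark about the missing factor $a$ compared to~\eqref{eq:socp.sdp.sol} is also to the point.
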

\begin{proof}
Due to \cite[Theorem 4.2]{bagirov2014introduction}, $\mathbf 0\in \partial \varphi(\mathbf{\bar z})$. From this and by Proposition \ref{prop:properties.phi.0.btp}.2, 
the first statement follows. The second statement is implied by the first one.
Let us prove the third statement.
Since $\mathbf X^\star=\sum_{j=1}^r \bar \xi_j\mathbf u_j\mathbf u_j^T$ with $\xi_j\ge 0$, $j\in [r]$, one has $\mathbf X^\star\succeq 0$. 
From this and since $\mathcal{A}\mathbf X^\star=\mathbf b$, 
$\mathbf X^\star$ is a feasible solution of SDP \eqref{eq:SDP.form.0}. 
Moreover,
\[\begin{array}{rl}

    \left< \mathbf C, \mathbf X^\star\right>&=\left< \mathbf C-\mathcal{A}^T\mathbf{\bar z}, \mathbf X^\star\right>+\left< \mathcal{A}^T\mathbf{\bar z}, \mathbf X^\star\right>\\
    &=\sum_{j=1}^r \bar \xi_j\left< \mathbf C-\mathcal{A}^T\mathbf{\bar z},\mathbf u_j\mathbf u_j^T\right>+\mathbf{\bar z}^T( \mathcal{A} \mathbf X^\star)\\
    &=\sum_{j=1}^r \bar \xi_j\mathbf u_j^T( \mathbf C-\mathcal{A}^T\mathbf{\bar z})\mathbf u_j+\mathbf{\bar z}^T\mathbf b\\
    &=\lambda_1(\mathbf C-\mathcal{A}^T\mathbf{\bar z})\sum_{j=1}^r \bar \xi_j\|\mathbf u_j\|_2^2+\mathbf{\bar z}^T\mathbf b\\
    &=\lambda_1(\mathbf C-\mathcal{A}^T\mathbf{\bar z})\sum_{j=1}^r \bar \xi_j+\mathbf{\bar z}^T\mathbf b\\
    &=a\max \{\lambda_1(\mathbf C-\mathcal{A}^T\mathbf{\bar z}),0\}+\mathbf{\bar z}^T\mathbf b=\psi(\mathbf{\bar z})=-\tau\,.
\end{array}\]
Thus, $\left< \mathbf C, \mathbf X^\star\right>=-\tau$, yielding the third statement.
\end{proof}
The next result proves that
when applied to NSOP \eqref{eq:nonsmooth.hierarchy.0.btp},
the LMBM algorithm \cite[Algorithm 1]{haarala2007globally} converges.

\begin{lemma}\label{lem:LMBM.conver.guara.btp}
LMBM applied to 
NSOP \eqref{eq:nonsmooth.hierarchy.0.btp} is globally convergent.
\end{lemma}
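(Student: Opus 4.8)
The plan is to repeat, almost verbatim, the argument used for Lemma \ref{lem:LMBM.conver.guara}, with $\psi$ in place of $\varphi$. The only structural difference is that $\psi$ is a pointwise maximum $\psi=\max\{\varphi_1,\varphi_2\}$ of two convex functions (with $\varphi_1(\mathbf z)=a\lambda_1(\mathbf C-\mathcal{A}^T\mathbf z)+\mathbf b^T\mathbf z$ and $\varphi_2(\mathbf z)=\mathbf b^T\mathbf z$), rather than a single shifted maximal-eigenvalue function; but since a finite maximum of finite-valued convex functions is again finite-valued and convex (hence locally Lipschitz), no extra hypothesis is needed and the scheme goes through unchanged.

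First I would invoke Proposition \ref{prop:properties.phi.0.btp}.1 to record that $\psi$ is convex and continuous on $\R^m$. By \cite[Proposition 5]{mifflin1977algorithm}, convexity implies that $\psi$ is weakly upper semismooth on $\R^m$, and then \cite[Theorem 3.1]{bihain1984optimization} gives that $\psi$ is upper semidifferentiable on $\R^m$.

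Second, I would observe that $\psi$ is bounded from below on $\R^m$: this is precisely the inequality $\psi(\mathbf z)\ge -\tau$ for all $\mathbf z\in\R^m$ established (as \eqref{eq:lower.boun.psi}) inside the proof of Lemma \ref{lem:obtain.dual.sol.btp}, where $\tau\in\R$ by condition 1 of Assumption \ref{ass:general.assump.sdp}. Combining upper semidifferentiability with boundedness from below, global convergence of LMBM \cite[Algorithm 1]{haarala2007globally} applied to \eqref{eq:nonsmooth.hierarchy.0.btp} follows from \cite[Section 5]{bihain1984optimization} (see also the concluding statement of \cite[Section 14.2]{bagirov2014introduction}).

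I do not expect any real obstacle here: the only point needing a line of justification is that the max-of-two-convex-functions structure does not break Mifflin's criterion, which it does not, since convexity alone is what is used. Everything else is identical to the CTP case treated in Lemma \ref{lem:LMBM.conver.guara}.
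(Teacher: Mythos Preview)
Your proposal is correct and matches the paper's approach exactly: the paper simply states that the proof is similar to that of Lemma~\ref{lem:LMBM.conver.guara}, and you have spelled out precisely that similarity, invoking convexity (via Proposition~\ref{prop:properties.phi.0.btp}.1), Mifflin's weak upper semismoothness, Bihain's upper semidifferentiability, and boundedness from below, just as in the CTP case.
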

The proof of Lemma \ref{lem:LMBM.conver.guara.btp} is similar to Lemma \ref{lem:LMBM.conver.guara}.

\subsection{Converting moment relaxations to standard SDP}
\label{sec:convert.SDP.sphere}
We will present a way to transform SDP \eqref{eq:moment.hierarchy.multiply.diagonal} to the form \eqref{eq:SDP.form} recalled as follows:
\begin{equation*}
-\tau_k = \sup_{\mathbf X\in \mathcal{S}_k} \{ \left< \mathbf C,\mathbf X\right>\,:\,\left<\mathbf A_j, \mathbf X\right>=\mathbf b_j\,,\,j\in [m]\,,\, \mathbf X \succeq 0\}\,.
\end{equation*}
Let $k\in\N$ be fixed.
We will prove that there exists $\mathbf A_j\in \mathcal S_k$, $j\in [r]$, such that $\mathbf X=\mathbf P_k\mathbf M_k(\mathbf y)\mathbf P_k$ for some $\mathbf y\in \R^{\stirlingii [0.7]{n} {2k}}$ if and only if  $\left< \mathbf A_j, \mathbf X\right>=0$, $j\in[r]$.
Let $\mathcal{V}=\{\mathbf P_k\mathbf M_k(\mathbf z)\mathbf P_k\,:\, \mathbf z\in \R^{\stirlingii [0.7]{n} {2k}}\}$.  Then $\mathcal{V}$ is a linear subspace of $\mathcal S_k$ and $\dim(\mathcal{V})=\stirlingii {n} {2k}$. 
We take a basis $\mathbf A_1,\dots,\mathbf A_r$ of the orthogonal complement $\mathcal{V}^\bot$ of $\mathcal{V}$. 
Notice that 
\begin{equation}\label{eq:dim.comple}
r=\dim(\mathcal{V}^\bot)=\dim(\mathcal S_k)-\stirlingii {n} {2k}=\frac{\stirlingii {n}k(\stirlingii {n}k+1)}2-\stirlingii {n}{2k}\,.
\end{equation}
With $\mathbf X\in \mathcal S_k$, it implies that $\mathbf X\in \mathcal{V}$ if and only if $\left< \mathbf A_j, \mathbf X\right>=0$, $j\in [r]$.

Let us find such a basis $\mathbf A_1,\dots,\mathbf A_r$. 
Let $\mathbf A=(A_{\alpha,\beta})_{\alpha,\beta\in\N^n_k}\in \mathcal{V}^\bot$. Then for all $\mathbf X=(X_{\alpha,\beta})_{\alpha,\beta\in\N^n_k}\in \mathcal{V}$, $\left< \mathbf A, \mathbf X\right>=0$. 
Note that if $\mathbf X=\mathbf P_k\mathbf M_k(\mathbf y)\mathbf P_k$, then one has 
\begin{equation*}\label{eq:relation.X.y}
X_{\alpha,\beta}=w_{\alpha,\beta}y_{\alpha+\beta}\,,\,\forall \alpha,\beta\in \N^n_k\,, 
\end{equation*}
with $w_{\alpha,\beta}:=\theta_{k,\alpha}^{1/2}\theta_{k,\beta}^{1/2}$, for all $\alpha,\beta\in \N^n_k$.
It implies that
\[0=\sum_{\alpha,\beta\in\N^n_k}w_{\alpha,\beta}A_{\alpha,\beta}y_{\alpha+\beta}\,,\,\forall \mathbf y\in\R^{\stirlingii[0.7] {n}{2k}}\,.\]
Let $\gamma\in\N^n_{2k}$ be fixed and let $\mathbf y\in\R^{\stirlingii [0.7]{n}{2k}}$ be such that for $\xi\in\N^n_{2k}$,
\[y_\xi=\begin{cases}
0&\text{ if }\xi\ne \gamma\,,\\
1&\text{ otherwise.}
\end{cases}\]
Then 
\[\arraycolsep=1.4pt\def\arraystretch{.5}
0=\sum_{\begin{array}{cc}
\scriptstyle \alpha,\beta\in\N^n_k \\
 \scriptstyle  \alpha+\beta=\gamma
\end{array}}
w_{\alpha,\beta}A_{\alpha,\beta}=\sum_{\begin{array}{cc}
\scriptstyle  \alpha,\beta\in\N^n_k \\
\scriptstyle \alpha=\beta=\gamma/2 \end{array}}w_{\alpha,\beta}A_{\alpha,\beta}+2\sum_{\begin{array}{cc}
\scriptstyle \alpha,\beta\in\N^n_k\\
\scriptstyle \alpha+\beta=\gamma\\
\scriptstyle \alpha<\beta
\end{array}
}w_{\alpha,\beta}A_{\alpha,\beta}\,.\]
If $\gamma\not\in 2\N^n$, we do not have the first term in the latter equality.
Let us define 
\[\Lambda_\gamma:=\{A_{\alpha,\beta}\,:\,\alpha,\beta\in\N^n_k\,,\, \alpha+\beta=\gamma\,,\, \alpha\le \beta\}\,.\]
It can be rewritten as $\Lambda_\gamma=\{A_{\alpha_j,\beta_j}\,,\,j\in [t]\}$
where $(\alpha_1,\beta_1)<\dots<(\alpha_t,\beta_t)$ and $t=|\Lambda_\gamma|$.
Thus, if $t\ge 2$, we can choose $\mathbf A$ such that for all $\alpha,\beta\in\N^n_k$,
\[A_{\alpha,\beta}=\begin{cases}
w_{\alpha_\mu,\beta_\mu}& \text{ if } \alpha_1=\beta_1 \text{ and }(\alpha,\beta) =(\alpha_1,\beta_1)\,,\\
\frac{1}2 w_{\alpha_\mu,\beta_\mu}& \text{ if } \alpha_1< \beta_1 \text{ and }(\alpha,\beta) \in\{(\alpha_1,\beta_1),(\beta_1,\alpha_1)\}\,,\\
-w_{\alpha_1,\beta_1}& \text{ if } \alpha_\mu=\beta_\mu \text{ and }(\alpha,\beta) =(\alpha_\mu,\beta_\mu)\,,\\
-\frac{1}2 w_{\alpha_1,\beta_1}& \text{ if } \alpha_\mu< \beta_\mu \text{ and }(\alpha,\beta) \in\{(\alpha_\mu,\beta_\mu),(\beta_\mu,\alpha_\mu)\}\,,\\
0 & \text{ otherwise}\,, 
\end{cases}\]
for some $\mu\in[t]\backslash \{1\}$.
Let us denote by $\mathcal{B}_\gamma$ the set of all such $A$ above satisfying $t=|\Lambda_\gamma|\ge 2$ and let $\mathcal{B}_\gamma=\emptyset$ otherwise. 
Then $|\mathcal{B}_\gamma|=|\Lambda_\gamma|-1 $.
From this and since $( \mathcal{B}_\gamma)_{\gamma\in\N^{n}_{2k}}$ is a sequence of pairwise disjoint subsets of $\mathcal S_k$, 
\[\left|\bigcup_{\gamma\in\N^{n}_{2k}} \mathcal{B}_\gamma\right|=\sum_{\gamma\in\N^{n}_{2k}}|\mathcal{B}_\gamma|=\sum_{\gamma\in\N^{n}_{2k}} \left( |(\alpha,\beta)\in(\N^n_k)^2\,:\, \alpha+\beta=\gamma\,,\,\alpha\le \beta | -\stirlingii {n}{2k} \right) \,.\]
It must be equal to $r$ as in \eqref{eq:dim.comple}. 
We just proved that $\bigcup_{\gamma\in\N^{n}_{2k}} \mathcal{B}_\gamma$ is a basis of $\mathcal{V}^\bot$.
Now we assume that $\bigcup_{\gamma\in\N^{n}_{2k}} \mathcal{B}_\gamma=\{\mathbf A_1,\dots,\mathbf A_r\}$.

Let us rewrite the constraints 
\begin{equation}\label{eq:moment.eq.cons}
\mathbf M_{k - \lceil h_j \rceil }(h_j\;\mathbf y)   = 0\,,\, j\in [l_h]\,,
\end{equation}
 as $\left<\mathbf A_j,\mathbf X\right>=0$, $j=r+1,\dots,m-1$
with $\mathbf X=\mathbf P_k\mathbf M_k(\mathbf y)\mathbf P_k$. 
From \eqref{eq:moment.eq.cons},
 \begin{equation}\label{eq:eq.hy}
\sum_{\gamma\in\N^n_{2\lceil h_j \rceil}}h_{j,\gamma}y_{\alpha+\gamma}=0\,,\,\alpha\in\N^n_{2(k-\lceil h_j \rceil)}\,,\,j\in [l_h]\,.
\end{equation}
Let $j\in[l_h]$ and $\alpha\in\N^n_{2(k-\lceil h_j \rceil)}$  be fixed. 
We define $\tilde {\mathbf A}=(\tilde A_{\mu,\nu})_{\mu,\nu\in\N^n_k}$ as follows:
\begin{equation}\label{eq:A.tilde}
    \tilde A_{\mu,\nu}=\begin{cases}
h_{j,\gamma}&\text{ if }\mu=\nu\,,\,\mu+\nu=\alpha+\gamma\,,\\

\frac{1}2 h_{j,\gamma}&\text{ if }\mu\ne\nu\,,\,\mu+\nu=\alpha+\gamma\,,\\
&\qquad\text{ and } (\mu,\nu)\le (\bar\mu,\bar\nu)\,,\,\forall \bar\mu,\bar\nu\in\N^n_k\, \text{ such that} \,\bar\mu+\bar\nu=\alpha+\gamma\,,\\
0&\text{ otherwise.}
\end{cases}
\end{equation}

Then \eqref{eq:eq.hy} implies that $\left<\tilde {\mathbf A}, \mathbf M_k(\mathbf y)\right>=0$.
Since $\mathbf M_k(\mathbf y)=\mathbf P_k^{-1}\mathbf X\mathbf P_k^{-1}$, 
\[0=\left<\tilde {\mathbf A}, \mathbf P_k^{-1}\mathbf X\mathbf P_k^{-1}\right>=\left<\mathbf P_k^{-1}\tilde {\mathbf A}\mathbf P_k^{-1}, \mathbf X\right>=\left<\mathbf A, \mathbf X\right>\,,\]
where $\mathbf A:=\mathbf P_k^{-1}\tilde{\mathbf  A}\mathbf P_k^{-1}$, yielding the statement. Thus, we obtain the constraints $\left<\mathbf A_j,\mathbf X\right>=0$, $j\in [m-1]$.

The final constraint $y_0=1$ can be rewritten as $\left<\mathbf A_m, \mathbf X\right>=1$ with $\mathbf A_m\in \mathcal S_k$ having zero entries  except the top left one $[\mathbf A_{m}]_{0,0}=1$.
Thus, we select $\mathbf b$ such that all entries of $\mathbf b$ are zeros except $b_m=1$.

The number $m$ (or $m_k$ when plugging the relaxation order $k$) of equality trace constraints $\left<\mathbf A_j,\mathbf X\right>=b_j$ is:
\begin{equation}\label{eq:bound.mk}
    m = \frac{1}2 {\stirlingii {n}k(\stirlingii {n}k+1)}-\stirlingii {n}{2k}+1+\sum_{j=1}^{l_h} {\stirlingii {n}{2(k-\lceil h_j \rceil)}} \,.
\end{equation}

The function $-L_{\mathbf y}(f)=-\sum_\gamma f_\gamma y_{\gamma}$ is equal to $\left<\mathbf C, \mathbf X\right>$ with $\mathbf C:=\mathbf P_k^{-1}\tilde {\mathbf C}\mathbf P_k^{-1}$, where $\tilde {\mathbf C}=(\tilde C_{\mu,\nu})_{\mu,\nu\in\N^n_k}$ is defined by:
\begin{equation}\label{eq:C.tilde}
    \tilde C_{\mu,\nu}=\begin{cases}
-f_{\gamma}&\text{ if }\mu=\nu\,,\,\mu+\nu=\gamma\,,\\

-\frac{1}2 f_{\gamma}&\text{ if }\mu\ne\nu\,,\,\mu+\nu=\gamma\,,\\
&\qquad\text{ and } (\mu,\nu)\le (\bar\mu,\bar\nu)\,,\,\forall \bar\mu,\bar\nu\in\N^n_k\, \text{ such that}\,\bar\mu+\bar\nu=\gamma\,,\\
0&\text{ otherwise.}
\end{cases}
\end{equation}


%
\footnotesize
\bibliographystyle{abbrv}

\end{document}